\providecommand{\tabularnewline}{\\}
\newcommand{\lyxdot}{.}
\def\RSsubtxt{section~}\newref{sub}{name = \RSsubtxt}}
\def\RSthmtxt{theorem~}\newref{thm}{name = \RSthmtxt}}
\def\RSlemtxt{lemma~}\newref{lem}{name = \RSlemtxt}}
\theoremstyle{plain}
\newtheorem{thm}{\protect\theoremname}
  \theoremstyle{plain}
  \newtheorem{lem}[thm]{\protect\lemmaname}
  \theoremstyle{plain}
  \newtheorem{cor}[thm]{\protect\corollaryname}
  \theoremstyle{plain}
  \newtheorem*{conjecture*}{\protect\conjecturename}
\date{}
\author{
  Qi Deng \thanks{Department of Computer and Information Science and Engineering, University of Florida, FL, 32611}\\
  \texttt{qdeng@ufl.edu}
  \and
  Guanghui Lan\thanks{Department of Industrial and Systems Engineering, University of Florida, FL, 32611}\\
  \texttt{glan@ise.ufl.edu}
 \and
 Anand Rangarajan\footnotemark[1]\\
 \texttt{anand@cise.ufl.edu}
}
  \providecommand{\conjecturename}{Conjecture}
  \providecommand{\corollaryname}{Corollary}
  \providecommand{\lemmaname}{Lemma}
\providecommand{\theoremname}{Theorem}
\begin{document}

\title{Randomized Block Subgradient Methods for Convex Nonsmooth and Stochastic
Optimization}
\maketitle
\begin{abstract}
Block coordinate descent methods and stochastic subgradient methods
have been extensively studied in optimization and machine learning.
By combining randomized block sampling with stochastic subgradient
methods based on dual averaging (\cite{nesterov2009primal,Xiao:2010:DAM}),
we present stochastic block dual averaging (SBDA)---a novel class
of block subgradient methods for convex nonsmooth and stochastic optimization.
SBDA requires only a block of subgradients and updates blocks of variables
and hence has significantly lower iteration cost than traditional
subgradient methods. We show that the SBDA-based methods exhibit the
optimal convergence rate for convex nonsmooth stochastic optimization.
More importantly, we introduce randomized stepsize rules and block
sampling schemes that are adaptive to the block structures, which
significantly improves the convergence rate w.r.t. the problem parameters.
This is in sharp contrast to recent block subgradient methods applied
to nonsmooth deterministic or stochastic optimization (\cite{Dang:arXiv1309.2249,nesterov2014subgradient}).
For strongly convex objectives, we propose a new averaging scheme
to make the regularized dual averaging method optimal, without having
to resort to any accelerated schemes.
\end{abstract}

\section{Introduction\label{sec:Introduction}}

In this paper, we mainly focus on the following convex optimization
problem: 

\begin{equation}
\min_{x\in X}\phi\left(x\right),\label{eq:prob-statement}
\end{equation}
where the feasible set $X$ is embedded in Euclidean space $\mathbb{R}^{N}$
for some integer $N>0$. Letting $N_{1},N_{2},\ldots,N_{n}$ be $n$
positive integers such that $\sum_{i=1}^{n}N_{i}=N$, we assume $X$
can be partitioned as $X=X_{1}\times X_{2}\times\ldots X_{n}$, where
each $X_{i}\subseteq\mathbb{R}^{N_{i}}$. We denote $x\in X$, by
$x=x^{\left(1\right)}\times x^{\left(2\right)}\ldots\times x^{\left(n\right)}$
where $x^{\left(i\right)}\in X_{i}$. The objective $\phi\left(x\right)$
consists of two parts: $\phi\left(x\right)=f\left(x\right)+\omega\left(x\right)$.
We stress that both $f\left(x\right)$ and $\omega\left(x\right)$
can be nonsmooth. $\omega\left(x\right)$ is a  convex function with
block separable structure: $\omega\left(x\right)=\sum_{i=1}^{n}\omega_{i}\left(x_{i}\right)$,
where each $\omega_{i}:X_{i}\rightarrow\mathbb{R}$ is convex and
relatively simple. In composite optimization or regularized learning,
the term $\omega\left(x\right)$ imposes solutions with certain preferred
structures. Common examples of $\omega\left(\cdot\right)$ include
the $\ell_{1}$ norm or squared $\ell_{2}$ norm regularizers. $f\left(x\right)$
is a general convex function. In many important statistical learning
problems,  $f\left(x\right)$  has the form of $f\left(x\right)=\mathbf{E}_{\xi}\left[F\left(x,\xi\right)\right]$,
where $F\left(x,\xi\right)$ is a convex loss function of $x\in X$
with $\xi$ representing sampled data. When it is difficult to evaluate
$f(x)$ exactly, as in batch learning or sample average approximation
(SAA), $f\left(x\right)$ is approximated with finite data. Firstly,
a large number of samples $\xi_{1},\xi_{2},\ldots,\xi_{m}$ are drawn,
and then $f\left(x\right)$ is approximated by $\tilde{f}\left(x\right)=\frac{1}{m}\sum_{i=1}^{m}F\left(x,\xi_{i}\right)$,
with the alternative problem: 

\begin{equation}
\min_{x\in X}\tilde{\phi}(x):=\tilde{f}\left(x\right)+\omega\left(x\right).\label{eq:prob-apprx}
\end{equation}

However, although classic first order methods can provide accurate
solutions to (\ref{eq:prob-apprx}), the major drawback of these approaches
is the poor scalability to large data. First order deterministic methods
require full information of the (sub)gradient and scan through the
entire dataset many times, which is prohibitive for applications where
scalability is paramount. In addition, due to the statistical nature
of the problem, solutions with high precision may not even be necessary.

To solve the aforementioned problems, stochastic methods---stochastic
(sub)gradient descent (SGD) or block coordinate descent (BCD) have
received considerable attention in the machine learning community.
Both of them confer new advantages in the trade offs between speed
and accuracy. Compared to deterministic and full (sub)gradient methods,
they are easier to implement, have much lower computational complexity
in each iteration, and often exhibit sufficiently fast convergence
while obtaining practically good solutions.

SGD was first studied in \cite{robbins1951stochastic} in the 1950s,
with the emphasis mainly on solving strongly convex problems; specifically
it only needs the gradient/subgradient on a few data samples while
iteratively updating all the variables. In the approach of online
learning or stochastic approximation (SA), SGD directly works on the
objective (\ref{eq:prob-statement}), and obtains convergence independent
of the sample size. While early work emphasizes asymptotic properties,
recent work investigate complexity analysis of convergence. Many works
(\cite{Nemirovski:2009:RSA:1654243.1654247,lan2012optimal,singer2009efficient,rakhlin2012making,ghadimi2012optimal,chen2012optimal,hazan2014beyond})
investigate the optimal SGD under various conditions. Proximal versions
of SGD, which explicitly incorporate the regularizer $\omega\left(x\right)$,
have been studied, for example in \cite{langford2009sparse,duchi2009efficient,duchi2010composite,Xiao:2010:DAM}.

The study of BCD also has a long history. BCD was initiated in \cite{luo1991convergence,Luo1992},
but the application of BCD to linear systems dates back to even earlier
(for example see the Gauss-Seidel method in \cite{hageman2012applied}).
It works on the approximated problem (\ref{eq:prob-apprx}) and makes
progress by reducing the original problem into subproblems using only
a single block coordinate of the variable at a time. Recent works
\cite{nesterov2012efficiency,richtarik2014iteration,Shalev-Shwartz:2009:SML:1553374.1553493,lu2013complexity}
study BCD with random sampling (RBCD) and obtain non-asymptotic complexity
rates.  For the regularized learning problem as in (\ref{eq:prob-apprx}),
RBCD on the dual formulation  has been proposed \cite{shalev2012stochastic,lacoste2012block,shalev2013accelerated}.
Although most of the work on BCD focuses on smooth (composite) objectives,
some recent work (\cite{Dang:arXiv1309.2249,xu2014block,wang2014randomized,zhao2014accelerated})
seeks to extend the realm of BCD in various ways. The works in \cite{nesterov2014subgradient,Dang:arXiv1309.2249}
discuss (block) subgradient methods for nonsmooth optimization. Combining
the ideas of SGD and BCD, the works in \cite{Dang:arXiv1309.2249,xu2014block,wang2014randomized,zhao2014accelerated,reddi2014large}
employ sampling of both features and data instances in BCD. 

In this paper, we propose a new class of block subgradient methods,
namely, stochastic block dual averaging (SBDA), for solving nonsmooth
deterministic and stochastic optimization problems. Specifically,
SBDA consists of a new dual averaging step incorporating the average
of all past (stochastic) block subgradients and variable updates involving
only block components. We bring together two strands of research,
namely, the dual averaging algorithm (DA) \cite{Xiao:2010:DAM,nesterov2009primal}
which was studied for nonsmooth optimization and randomized coordinate
descent (RCD) \cite{nesterov2012efficiency}, employed for smooth
deterministic problems. Our main contributions consist of the following:
\begin{itemize}
\item Two types of SBDA have been proposed for different purposes. For regularized
learning, we propose SBDA-u which performs uniform random sampling
of blocks. For more general nonsmooth learning problems, we propose
SBDA-r which applies an optimal sampling scheme with improved convergence.
Compared with existing subgradient methods for nonsmooth and stochastic
optimization, both SBDA-u and SBDA-r have significantly lower iteration
cost when the computation of block subgradients and block updates
are convenient. 
\item We contribute a novel scheme of randomized stepsizes and optimized
sampling strategies which are truly adaptive to the block structures.
Selecting block-wise stepsizes and optimal block sampling have been
critical issues for speeding up BCD for smooth regularized problems,
please see \cite{nesterov2012efficiency,qu2014coordinate,shalev2012stochastic,richtarik2014iteration}
for some recent advances. For nonsmooth or stochastic optimization,
the most closely related work to ours are \cite{Dang:arXiv1309.2249,nesterov2014subgradient}
which do not apply block-wise stepsizes. To the best of our knowledge,
this is the \emph{first time} block subgradient methods with block
adaptive stepsizes and optimized sampling have been proposed for nonsmooth
and stochastic optimization. 
\item We provide new theoretical guarantees of convergence of SBDA methods.
SBDA obtains the optimal rate of convergence for general convex problems,
matching the state of the art results in the literature of stochastic
approximation and online learning. More importantly, SBDA exhibits
a significantly improved convergence rate w.r.t. the problem parameters.
When the regularizer $\omega\left(x\right)$ is strongly convex, our
analysis provides a simple way to make the regularized dual averaging
methods in \cite{Xiao:2010:DAM} optimal. We show an \textit{aggressive}
weighting is sufficient to obtain{\small{} $O\left(\frac{1}{T}\right)$}convergence
where $T$ is the iteration count, without the need for any accelerated
schemes. This appears to be a new result for simple dual averaging
methods.
\end{itemize}

\paragraph*{Related work}

Extending BCD to the realm of nonsmooth and stochastic optimization
has been of interest lately. Efficient subgradient methods for a class
of nonsmooth problems has been proposed in \cite{nesterov2014subgradient}.
However, to compute the stepsize, the block version of this subgradient
method requires computation of the entire subgradient and knowledge
of the optimal value;  hence, it may be not efficient in a more general
setting. The methods in \cite{Dang:arXiv1309.2249,nesterov2014subgradient}
employ stepsizes that are not adaptive to the block selection and
have  therefore suboptimal bounds to our work. For SA or online learning,
SBDA applies double sampling of both blocks and data. A similar approach
has also been employed for new stochastic methods in some very recent
work (\cite{Dang:arXiv1309.2249,zhao2014accelerated,wang2014randomized,reddi2014large,xu2014block}).
It should be noted here that if the assumptions are strengthened,
namely, in the batch learning formulation, and if $\tilde{\phi}$
is smooth, it is possible to obtain a linear convergence rate $O\left(e^{-T}\right)$.
Nesterov's randomized block coordinate methods \cite{nesterov2012efficiency,richtarik2014iteration}
consider different stepsize rules and block sampling but only for
smooth objectives with possible nonsmooth regularizers. Recently,
nonuniform sampling in BCD has been addressed in \cite{qu2014coordinate,zhao2014stochastic,yintat2013coordinatedescent}
and shown to have advantages over uniform sampling. Although our work
discusses block-wise stepsizes and nonuniform sampling as well, we
stress the nonsmooth objectives that appear in deterministic and stochastic
optimization  . The proposed algorithms employ very different proof
techniques, thereby obtaining different optimized sampling distributions.

\subsubsection*{Outline of the results. }

We introduce two versions of SBDA that are appropriate in different
contexts. The first algorithm, SBDA with uniform block sampling (SBDA-u)
works for a class of convex composite functions, namely, $\omega\left(x\right)$
is explicate in the proximal step. When $\omega(x)$ is a general
convex function, for example, the sparsity regularizer $\|x\|_{1}$,
we show that SBDA-u obtains the convergence rate of $O\left(\frac{\sqrt{n}\sum_{i}^{n}\sqrt{M_{i}^{2}D_{i}}}{\sqrt{T}}\right)$,
which improves the rate of $O\left(\frac{\sqrt{n}\sqrt{\sum_{i}^{n}M_{i}^{2}}\cdot\sqrt{\sum_{i}^{n}D_{i}}}{\sqrt{T}}\right)$
by SBMD. Here $\{M_{i}\}$ and $\{D_{i}\}$ are some parameters associated
with the blocks of coordinates to be specified later. When $\omega(x)$
is a strongly convex function, by using a more aggressive scheme to
be later specified, SBDA-u obtains the optimal rate of $O\left(\frac{n\sum_{i}M_{i}^{2}}{\lambda T}\right)$,
matching the result from SBMD. In addition, for general convex problems
in which $\omega(x)=0$, we propose a variant of SBDA with nonuniform
random sampling (SBDA-r) which achieves an improved convergence rate
{\small{}{}$O\left(\frac{\left(\sum_{j=1}^{n}M_{j}^{2/3}D_{j}^{1/3}\right)^{3/2}}{\sqrt{T}}\right)$}.
These computational results are summarized in Table~(\ref{tab:complexity}). 

\begin{table}
\centering{}%
\begin{tabular}{|c|c|c|}
\hline 
Algorithm  & Objective  & Complexity\tabularnewline
\hline 
\hline 
SBDA-u  & Convex composite  & $O\left(\frac{\sqrt{n}\sum_{i}^{n}\sqrt{M_{i}^{2}D_{i}}}{\sqrt{T}}\right)$\tabularnewline
\hline 
SBDA-u  & Strongly convex composite  & $O\left(\frac{n\sum_{i}M_{i}^{2}}{\lambda T}\right)$\tabularnewline
\hline 
SBDA-r  & Convex nonsmooth  & $O\left(\frac{\left(\sum_{j=1}^{n}M_{j}^{2/3}D_{j}^{1/3}\right)^{3/2}}{\sqrt{T}}\right)$\tabularnewline
\hline 
\end{tabular}\protect\caption{\label{tab:complexity}Iteration complexity of our SBDA algorithms.}
\end{table}

\paragraph*{Structure of the Paper}

The paper proceeds as follows. Section 2 introduces the notation used
in this paper. Section 3 presents and analyzes SBDA-u. Section 4 presents
SBDA-r, and discusses optimal sampling and its convergence. Experimental
results to demonstrate the performance of SBDA are provided in section
6. Section 7  draws conclusion and comments on possible future directions.

\section{Preliminaries}

Let $\mathbb{R}^{N}$ be a Euclidean vector space, $N_{1},N_{2},\ldots N_{n}$
be $n$ positive integers such that $N_{1}+\ldots N_{n}=N$. Let $I$
be the identity matrix in $\mathbb{R}^{N\times N}$, $U_{i}$ be a
$N\times N_{i}$-dim matrix such that 
\[
I=\left[U_{1}U_{2}\ldots U_{n}\right].
\]
For each $x\in\mathbb{R}^{N}$, we have the decomposition: $x=U_{1}x^{\left(1\right)}+U_{2}x^{\left(2\right)}+\ldots+U_{n}x^{\left(n\right)}$,
where $x^{\left(i\right)}\in\mathbb{R}^{N_{i}}$.

Let $\|\cdot\|_{\left(i\right)}$ denote the norm on the $\mathbb{R}^{N_{i}}$,
and $\|\cdot\|_{\left(i\right),*}$ be the  induced dual norm. We
define the norm $\|\cdot\|$ in $\mathbb{R}^{N}$ by: $\|x\|^{2}=\sum_{i=1}^{n}\|x^{\left(i\right)}\|_{\left(i\right)}^{2}$
and its dual norm: $\|\cdot\|_{*}$ by $\|x\|_{*}^{2}=\sum_{i=1}^{n}\|x^{\left(i\right)}\|_{\left(i\right),*}^{2}$

Let $d_{i}:X_{i}\rightarrow\mathbb{R}$ be a distance transform function
 with modulus $\|\cdot\|_{\left(i\right)}$ with respect to $\rho$..
  $d_{i}\left(\cdot\right)$ is continuously differentiable and strongly
convex: 
\[
d_{i}\left(\alpha x+\left(1-\alpha\right)y\right)\le\alpha d_{i}\left(x\right)+\left(1-\alpha\right)d_{i}\left(y\right)-\frac{1}{2}\rho\alpha\left(1-\alpha\right)\|x-y\|_{\left(i\right)}^{2},\quad x,y\in X_{i},
\]
$i=1,2,...,n$. 

Let us assume there exists a solution $x^{*}\in X$ to the problem
(\ref{eq:prob-statement})  , and

\begin{equation}
d_{i}(x^{*(i)})\le D_{i}<\infty,\quad i=1,2,\ldots n,\label{eq:assum-D}
\end{equation}
Without loss of generality, we assume $d_{i}\left(\cdot\right)$ is
nonnegative, and write 
\begin{equation}
d\left(x\right)=\sum_{i}^{n}d_{i}\left(x^{\left(i\right)}\right)\label{eq:defi-prox-func}
\end{equation}
 for simplicity. Further more, we define the Bregman divergence associated
with $d_{i}\left(\cdot\right)$ by
\[
\mathcal{V}_{i}\left(z,x\right)=d_{i}\left(x\right)-d_{i}\left(z\right)-\left\langle \nabla_{i}d\left(z\right),x-z\right\rangle ,\quad z,x\in X_{i}.
\]
and $\mathcal{V}\left(z,x\right)=\sum_{i}^{n}\mathcal{V}_{i}\left(z^{\left(i\right)},x^{\left(i\right)}\right)$.

We denote $f(x)=E_{\xi}\left[F\left(x,\xi\right)\right]$, and let
$G(x,\xi)$ be a subgradient of $F(x,\xi)$, and $g(x)=E_{\xi}\left[G\left(x,\xi\right)\right]\in\partial f\left(x\right)$
be a subgradient of $f(x)$. Let $g^{\left(i\right)}\left(\cdot\right)$,
$G^{\left(i\right)}\left(x,\xi\right)$ denote their $i$-th block
components , for $i=1,2,\ldots,n$. Throughout the paper, we assume
the (stochastic) block coordinate subgradient of $f$ satisfying:
\begin{equation}
\|g^{(i)}\left(x\right)\|_{\left(i\right),*}^{2}=\mathbf{E}^{2}\left[\|G^{\left(i\right)}\left(x,\xi\right)\|_{\left(i\right),*}\right]\le\mathbf{E}\left[\|G^{\left(i\right)}\left(x,\xi\right)\|_{\left(i\right),*,}^{2}\right]\le M_{i}^{2},\quad\forall x\in X\label{ass-grad-bound}
\end{equation}
for $i=1,2,\ldots,n$. Note that  although we make assumptions of
stochastic objective , the following analysis and conclusions naturally
extend to deterministic optimization. To see that, we can simply assume
$g\left(x\right)\equiv G\left(x,\xi\right)$, and $f\left(x\right)\equiv F\left(x,\xi\right)$,
for any $\xi$.

 Before introducing the main convergence properties, we first summarize
several useful results in the following lemmas.  Lemma \ref{lem:prox-ineq-1},
\ref{lem:prox-ineq-2}, and \ref{lem:function-bound} slightly generalize
the results in \cite{tseng2008accelerated,lan2011primal}, \cite{nesterov2009primal},
and \cite{lan2012optimal} respectively; their proofs are left in
Appendix.
\begin{lem}
\label{lem:prox-ineq-1}Let $f\left(\cdot\right)$ be a lower semicontinuous
convex function and $d\left(\cdot\right)$ be defined by (\ref{eq:defi-prox-func}).
If 
\[
z=\arg\min_{x}\Psi\left(x\right)\coloneqq f\left(x\right)+d\left(x\right),
\]
 then 
\[
\Psi\left(x\right)\ge\Psi\left(z\right)+\mathcal{V}\left(z,x\right),\quad\forall x\in X.
\]
Moreover, if $f\left(x\right)$ is $\lambda$-strongly convex with
norm $\|\cdot\|_{\left(i\right)}$, and  $x=z+U_{i}y\in X$ where
$y\in X_{i}$, $z\in X$, then 
\[
\Psi\left(x\right)\ge\Psi\left(z\right)+\mathcal{V}\left(z,x\right)+\frac{\lambda}{2}\|y\|_{\left(i\right)}^{2},\quad\forall x\in X.
\]

\end{lem}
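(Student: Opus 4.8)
The plan is to make everything rest on the first-order optimality condition for the minimizer $z$ of $\Psi=f+d$, and then to combine it with the subgradient inequality for $f$ and the defining identity of the Bregman divergence. Since $f$ is lower semicontinuous convex and $d$ is differentiable and strongly convex, $z$ minimizing $\Psi$ over $X$ yields (via the subdifferential sum rule, with the constraint $X$ absorbed through its normal cone) a subgradient $s\in\partial f(z)$ satisfying the variational inequality
\[
\left\langle s+\nabla d(z),\,x-z\right\rangle \ge 0,\quad\forall x\in X.
\]
I would record this as the single structural fact driving the whole argument.

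For the first claim I would lower-bound $f(x)$ by its subgradient inequality at $z$, namely $f(x)\ge f(z)+\langle s,x-z\rangle$, and rewrite $d(x)$ straight from the definition of $\mathcal{V}$: summing the blockwise definitions gives $\mathcal{V}(z,x)=d(x)-d(z)-\langle\nabla d(z),x-z\rangle$, hence $d(x)=d(z)+\langle\nabla d(z),x-z\rangle+\mathcal{V}(z,x)$. Adding the two estimates and grouping the inner products produces $\Psi(x)\ge\Psi(z)+\langle s+\nabla d(z),x-z\rangle+\mathcal{V}(z,x)$; the optimality inequality above annihilates the middle term, leaving exactly $\Psi(x)\ge\Psi(z)+\mathcal{V}(z,x)$.

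For the second claim I would run the identical computation but replace the plain subgradient inequality by the strong-convexity estimate. Because $x=z+U_i y$ differs from $z$ only in the $i$-th block, the $\lambda$-strong convexity of $f$ along that block direction gives $f(x)\ge f(z)+\langle s,x-z\rangle+\tfrac{\lambda}{2}\|y\|_{(i)}^2$, and carrying the extra quadratic term through the same bookkeeping yields the claimed $\Psi(x)\ge\Psi(z)+\mathcal{V}(z,x)+\tfrac{\lambda}{2}\|y\|_{(i)}^2$.

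The main obstacle is the careful justification of the optimality inequality rather than the algebra. I would need to ensure the subdifferential sum rule is legitimate, so that a \emph{single} $s\in\partial f(z)$ simultaneously captures the nonsmoothness of $f$ and the normal cone of $X$, which is where lower semicontinuity and the differentiability of $d$ are genuinely used. I would also have to interpret the blockwise modulus correctly, so that the strong-convexity term $\tfrac{\lambda}{2}\|x-z\|^2$ collapses to $\tfrac{\lambda}{2}\|y\|_{(i)}^2$ when the perturbation $x-z=U_i y$ is confined to block $i$. Both points are standard, but they are precisely the steps that activate the hypotheses, so I would state them explicitly before turning the crank on the inequalities.
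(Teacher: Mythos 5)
Your proof is correct and follows essentially the same route as the paper: subgradient inequality for $f$ at $z$ (with the strong-convexity term added for the second claim), the first-order optimality condition at $z$, and the defining identity of the Bregman divergence. The only difference is cosmetic --- the paper cites Tseng for the first part and writes the optimality condition as $g^{(i)}(z)=-\nabla_i d(z)$, whereas you prove the first part directly and keep the variational-inequality form $\langle s+\nabla d(z),x-z\rangle\ge 0$, which handles the constraint set $X$ more carefully.
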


\begin{lem}
\label{lem:prox-ineq-2} Let $\Psi:X\rightarrow\mathbb{R}$ be convex,
block  separable, and  $\rho_{i}$-strongly convex with modulus $\rho_{i}$
w.r.t. $\|\cdot\|_{\left(i\right)}$ , $\rho_{i}>0$, $1\le i\le n$,
and $g\in\mathbb{R}^{N}$. If 
\[
x_{0}\in\arg\min_{x\in X}\left\{ \Psi\left(x\right)\right\} ,\text{and}\;z\in\arg\min_{x\in X}\left\{ \left\langle U_{i}g^{\left(i\right)},x\right\rangle +\Psi\left(x\right)\right\} ,
\]
then

\[
\left\langle U_{i}g^{\left(i\right)},x_{0}\right\rangle +\Psi\left(x_{0}\right)\le\left\langle U_{i}g^{\left(i\right)},z\right\rangle +\Psi\left(z\right)+\frac{1}{2\rho_{i}}\|g\|_{\left(i\right),*}^{2}.
\]

\end{lem}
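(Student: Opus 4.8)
The plan is to exploit the \emph{block separability} of $\Psi$ to collapse the claim to a single-block inequality, and then to combine strong convexity with a Fenchel--Young (dual-norm) estimate. Write $\Psi(x)=\sum_{j=1}^{n}\Psi_{j}(x^{(j)})$. The perturbation $\langle U_{i}g^{(i)},x\rangle=\langle g^{(i)},x^{(i)}\rangle$ touches only the $i$-th coordinate block, so both minimization problems decouple across blocks. Consequently $x_{0}$ and $z$ coincide on every block $j\neq i$ (each equals $\arg\min_{u}\Psi_{j}(u)$), while $x_{0}^{(i)}=\arg\min_{u}\Psi_{i}(u)$ and $z^{(i)}=\arg\min_{u}\{\langle g^{(i)},u\rangle+\Psi_{i}(u)\}$. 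All contributions from blocks $j\neq i$ cancel in the difference of the two sides, so the asserted inequality reduces to the one-block statement $\langle g^{(i)},x_{0}^{(i)}\rangle+\Psi_{i}(x_{0}^{(i)})\le\langle g^{(i)},z^{(i)}\rangle+\Psi_{i}(z^{(i)})+\tfrac{1}{2\rho_{i}}\|g^{(i)}\|_{(i),*}^{2}$, where $\|g\|_{(i),*}$ in the statement is read as the dual norm of the $i$-th block $g^{(i)}$.

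Next I would record the quadratic growth of $\Psi_{i}$ at its minimizer. Since $\Psi_{i}$ is $\rho_{i}$-strongly convex w.r.t. $\|\cdot\|_{(i)}$ and $x_{0}^{(i)}$ minimizes it over $X_{i}$, the first-order optimality condition (there is $s\in\partial\Psi_{i}(x_{0}^{(i)})$ with $\langle s,u-x_{0}^{(i)}\rangle\ge0$ for all $u\in X_{i}$) together with the strong-convexity subgradient inequality yields $\Psi_{i}(u)\ge\Psi_{i}(x_{0}^{(i)})+\tfrac{\rho_{i}}{2}\|u-x_{0}^{(i)}\|_{(i)}^{2}$ for all $u\in X_{i}$; this is the strongly convex analogue of the first part of Lemma~\ref{lem:prox-ineq-1}. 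Taking $u=z^{(i)}$ gives the key upper bound $\Psi_{i}(x_{0}^{(i)})-\Psi_{i}(z^{(i)})\le-\tfrac{\rho_{i}}{2}\|z^{(i)}-x_{0}^{(i)}\|_{(i)}^{2}$.

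Finally I would substitute and complete the square. Expanding the one-block gap as $\langle g^{(i)},x_{0}^{(i)}-z^{(i)}\rangle+\Psi_{i}(x_{0}^{(i)})-\Psi_{i}(z^{(i)})$, inserting the bound above, and applying $\langle g^{(i)},x_{0}^{(i)}-z^{(i)}\rangle\le\|g^{(i)}\|_{(i),*}\,\|x_{0}^{(i)}-z^{(i)}\|_{(i)}$, I obtain an expression of the form $\|g^{(i)}\|_{(i),*}\,t-\tfrac{\rho_{i}}{2}t^{2}$ with $t=\|x_{0}^{(i)}-z^{(i)}\|_{(i)}\ge0$. Maximizing this concave quadratic over $t$ yields the value $\tfrac{1}{2\rho_{i}}\|g^{(i)}\|_{(i),*}^{2}$, which is precisely the claimed bound.

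There is no deep obstacle here; the argument is short once the reduction is made. The two points to get right are (i) correctly using block separability so that the contributions of blocks $j\neq i$ cancel and the entire estimate localizes to block $i$, and (ii) applying the strong-convexity/optimality inequality in the correct \emph{direction}: it must furnish an \emph{upper} bound on $\Psi_{i}(x_{0}^{(i)})-\Psi_{i}(z^{(i)})$ (the minimizer of $\Psi_{i}$ sits below $z^{(i)}$ by a full quadratic margin), since using it the other way would produce only a useless lower bound.
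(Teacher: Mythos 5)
Your proof is correct, but it takes a genuinely different route from the paper's. The paper argues through Fenchel conjugacy: it introduces $h\left(y\right)=\max_{x\in X}\left\{ \left\langle y,x\right\rangle -\Psi\left(x\right)\right\} $, invokes the fact that the conjugate of a block-separable, $\rho_{i}$-strongly convex function has an $i$-th block gradient that is $\tfrac{1}{\rho_{i}}$-Lipschitz, notes $\nabla h\left(0\right)=x_{0}$, and reads off the claim from the block-smoothness inequality $h\left(-U_{i}g^{\left(i\right)}\right)\le h\left(0\right)+\left\langle x_{0},-U_{i}g^{\left(i\right)}\right\rangle +\tfrac{1}{2\rho_{i}}\|g\|_{\left(i\right),*}^{2}$ after substituting $h\left(0\right)=-\Psi\left(x_{0}\right)$ and $h\left(-U_{i}g^{\left(i\right)}\right)=-\left\langle U_{i}g^{\left(i\right)},z\right\rangle -\Psi\left(z\right)$. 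You instead work entirely on the primal side: quadratic growth of $\Psi_{i}$ at its unconstrained-over-$X_{i}$ minimizer, Cauchy--Schwarz in the dual norm, and maximization of the concave quadratic $\|g^{\left(i\right)}\|_{\left(i\right),*}t-\tfrac{\rho_{i}}{2}t^{2}$. Both arguments are sound and yield the identical constant. Yours is more elementary and self-contained, since it does not rely on the (true but itself nontrivial) duality between strong convexity and smoothness of the conjugate; the paper's is shorter on the page and generalizes more cleanly if one later wants bounds involving several perturbed blocks at once, since the smoothness of $h$ is stated blockwise from the outset. One small remark: your reduction to a single block is not actually needed --- the quadratic growth $\Psi\left(u\right)\ge\Psi\left(x_{0}\right)+\tfrac{\rho_{i}}{2}\|u^{\left(i\right)}-x_{0}^{\left(i\right)}\|_{\left(i\right)}^{2}$ (dropping the nonnegative contributions of the other blocks) together with $\left\langle U_{i}g^{\left(i\right)},x_{0}-z\right\rangle =\left\langle g^{\left(i\right)},x_{0}^{\left(i\right)}-z^{\left(i\right)}\right\rangle $ gives the same completion of the square directly, though separability is of course what makes the dropped terms harmless, so your version is equally valid.
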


\begin{lem}
\label{lem:function-bound}If $f$ satisfies the assumption (\ref{ass-grad-bound}),
let $x=z+U_{i}y\in X$ where $y\in X_{i}$, $x\in X$, then 
\begin{equation}
f(z)\le f(x)+\langle g^{(i)}(x),y\rangle+2M_{i}\|y\|_{(i)}.
\end{equation}

\end{lem}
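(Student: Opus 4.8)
The plan is to exploit the fact that $z$ and $x$ differ only in their $i$-th block, so that the whole argument localizes to block $i$. Writing $x=z+U_{i}y$ means $x^{(i)}=z^{(i)}+y$ while $x^{(j)}=z^{(j)}$ for $j\neq i$, hence $x-z=U_{i}y$. Because I want an \emph{upper} bound on $f(z)$, the natural tool is the subgradient inequality anchored \emph{at} $z$: choosing $g(z)\in\partial f(z)$ and evaluating at $x$ gives $f(x)\ge f(z)+\langle g(z),x-z\rangle$, and since $x-z=U_{i}y$ only the $i$-th block survives the inner product, so that $\langle g(z),x-z\rangle=\langle g^{(i)}(z),y\rangle$. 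Rearranging yields the preliminary bound $f(z)\le f(x)-\langle g^{(i)}(z),y\rangle$.

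The next step is to introduce the subgradient $g^{(i)}(x)$ that actually appears in the claim. I would add and subtract it, writing
\[
-\langle g^{(i)}(z),y\rangle=\langle g^{(i)}(x),y\rangle-\langle g^{(i)}(x)+g^{(i)}(z),y\rangle.
\]
Substituting this into the preliminary bound reproduces the $+\langle g^{(i)}(x),y\rangle$ term of the statement, leaving only the residual $-\langle g^{(i)}(x)+g^{(i)}(z),y\rangle$ to be controlled.

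Finally, I would bound that residual by H\"older's inequality for the block norm and its dual, $-\langle g^{(i)}(x)+g^{(i)}(z),y\rangle\le\|g^{(i)}(x)+g^{(i)}(z)\|_{(i),*}\,\|y\|_{(i)}$, then apply the triangle inequality and the uniform block-subgradient bound (\ref{ass-grad-bound}), which gives $\|g^{(i)}(x)\|_{(i),*}\le M_{i}$ and $\|g^{(i)}(z)\|_{(i),*}\le M_{i}$ (both $x$ and $z$ lie in $X$, so the assumption applies to each). This yields $\|g^{(i)}(x)+g^{(i)}(z)\|_{(i),*}\le 2M_{i}$, so the residual is at most $2M_{i}\|y\|_{(i)}$, and the chain closes to the claimed inequality.

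The only genuine subtlety --- and the step I would watch most carefully --- is the choice of anchor for the subgradient inequality: anchoring at $x$ would produce the wrong inequality direction, so it is essential to anchor at $z$ and only afterward re-express everything in terms of $g^{(i)}(x)$ via the add-and-subtract trick. Everything else is routine, relying only on convexity, the block decomposition $x-z=U_{i}y$, and the boundedness assumption; note that the deterministic case follows verbatim under the identification $g\equiv G$ noted after (\ref{ass-grad-bound}).
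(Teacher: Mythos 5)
Your proof is correct and follows essentially the same route as the paper's: the subgradient inequality anchored at $z$, an add-and-subtract step to introduce $g^{(i)}(x)$, and Cauchy--Schwarz together with the bound (\ref{ass-grad-bound}) applied to both subgradients to produce the $2M_{i}\|y\|_{(i)}$ slack. If anything you are more careful than the paper about the orientation of $y$ (the paper's own proof silently treats $z-x$ as $U_{i}y$, which flips the sign of the linear term relative to the stated hypothesis $x=z+U_{i}y$), but since the $2M_{i}\|y\|_{(i)}$ term absorbs the linear term with either sign, this makes no difference to the validity of the argument.
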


\section{Uniformly randomized SBDA (SBDA-u)\label{sec:Uniformly-randomized-SBDA}}

In this section, we describe uniformly randomized SBDA (SBDA-u) for
the composite problem (\ref{eq:prob-statement}). We consider the
 formulation proposed in \cite{Xiao:2010:DAM}, since it incorporates
the regularizers for composite problems. The main update of the DA
algorithm has the form 
\begin{equation}
x_{t+1}=\arg\min_{x\in X}\left\{ \sum_{s=1}^{t}\left\langle G_{s},x\right\rangle +t\omega\left(x\right)+\beta_{t}d\left(x\right)\right\} ,\label{eq:dual-averaging0}
\end{equation}
where $\left\{ \beta_{t}\right\} $ is a parameter sequence and $G_{s}$
is shorthand for $G\left(x_{s},\xi_{s}\right)$, and $d\left(x\right)$
is a strongly convex proximal function. When $\omega\left(x\right)=0$,
this reduces to a version of Nesterov's primal-dual subgradient method
\cite{nesterov2009primal}. 

Let $\bar{G}=\sum_{s=0}^{t}\alpha_{s}U_{i_{s}}G^{\left(i_{s}\right)}\left(x_{s},\xi_{s}\right)$,
where $\left\{ \alpha_{t}\right\} $ is a sequence of positive values,
$\left\{ i_{t}\right\} $ is a sequence of sampled indices. The main
iteration step of SBDA has the form 
\begin{equation}
x_{t+1}^{\left(i_{t}\right)}=\arg\min_{x\in X_{i_{t}}}\left\{ \langle\bar{G}^{\left(i_{t}\right)},x\rangle+l_{t}^{\left(i_{t}\right)}\omega_{i_{t}}\left(x\right)+\gamma_{t}^{\left(i_{t}\right)}d_{i_{t}}\left(x\right)\right\} ,\label{eq:dual-averaging}
\end{equation}
and $x_{t+1}^{\left(i\right)}=x_{t}^{\left(i\right)}$, $i\neq i_{t}$. 

We highlight two important aspects of the proposed iteration (\ref{eq:dual-averaging}).
Firstly, the update in (\ref{eq:dual-averaging}) incorporates the
past randomly sampled block (stochastic) subgradients $\left\{ G^{\left(i_{t}\right)}\left(x_{t},\xi_{t}\right)\right\} $,
rather than the full (stochastic) subgradients. Meanwhile, the update
of the primal variable is restricted to the same block ($i_{t}$),
leaving the other blocks untouched. Such block decomposition significantly
reduces the iteration cost of the dual averaging method when the block-wise
operation is convenient. Secondly, (\ref{eq:dual-averaging}) employs
a novel randomized stepsize sequence $\left\{ \gamma_{t}\right\} $
where $\gamma_{t}\in\mathbb{R}^{n}$. More specifically, $\gamma_{t}$
depends not only on the iteration count $t$, but also on the block
index $i_{t}$. $\left\{ \gamma_{t}\right\} $ satisfies the assumptions,
\begin{align}
\gamma_{t}^{\left(j\right)}=\gamma_{t-1}^{\left(j\right)},j\neq i_{t},\,\mathrm{and}\,\gamma_{t}^{\left(j\right)} & \ge\gamma_{t-1}^{\left(j\right)},j=i_{t}.\label{eq:assumption-gamma}
\end{align}
The most important aspect of (\ref{eq:assumption-gamma}) is that
stepsizes can be specified for each block of coordinates, thereby
allowing for aggressive descent. As will be shown later, the rate
of convergence, in terms of the problem parameters, can be significantly
reduced by properly choosing these control parameters. In addition,
we allow the sequence $\left\{ \alpha_{t}\right\} $ and the related
$\left\{ l_{t}\right\} $ to be variable, hence offer the opportunity
of different averaging schemes in composite settings. To summarize,
the full SBDA-u is described in Algorithm~\ref{alg:sbda-u}.

\begin{algorithm}
\KwIn{convex composite function $\phi\left(x\right)=f\left(x\right)+\omega\left(x\right)$,
a sequence of samples $\left\{ \xi_{t}\right\} $;} 

initialize $\alpha_{-1}\in\mathbb{R}$, $\gamma_{-1}\in\mathbb{R}^{n}$,
$l_{-1}=\mathbf{0}^{n}$, $\bar{G}=\mathbf{0}^{N}$, $x_{0}=\arg\min_{x\in X}\sum_{i=1}^{n}\gamma_{-1}^{\left(i\right)}d_{i}\left(x^{\left(i\right)}\right)$\;

\For{ $t=0,1,\ldots,T-1$}{ sample a block $i_{t}\in\left\{ 1,2,\ldots,n\right\} $
with uniform probability $\frac{1}{n}$\;

set $\gamma_{t}^{\left(i\right)}$, $i=1,2,\ldots,n$\; 

set $l_{t}^{\left(i_{t}\right)}=l_{t-1}^{\left(i_{t}\right)}+\alpha_{t}$
and $l_{t}^{\left(j\right)}=l_{t-1}^{\left(j\right)}$ for $j\neq i_{t}$\;

update $\bar{G}$: $\bar{G}=\bar{G}+\alpha_{t}U_{i_{t}}G^{\left(i_{t}\right)}\left(x_{t},\xi_{t}\right)$\; 

update $x_{t+1}^{\left(i_{t}\right)}=\arg\min_{x\in X_{i_{t}}}\left\{ \langle\bar{G}^{\left(i_{t}\right)},x\rangle+l_{t}^{\left(i_{t}\right)}\omega_{i_{t}}\left(x\right)+\gamma_{t}^{\left(i_{t}\right)}d_{i_{t}}\left(x\right)\right\} $\; 

$x_{t+1}^{\left(j\right)}=x_{t}^{\left(j\right)}$, for $j\neq i_{t}$\;
}

\KwOut{$\bar{x}=\left[\sum_{t=1}^{T}\left(\alpha_{t-1}-\frac{n-1}{n}\alpha_{t}\right)x_{t}\right]/\sum_{t=1}^{T}\left(\alpha_{t-1}-\frac{n-1}{n}\alpha_{t}\right)$;}\caption{Uniformly randomized stochastic block dual averaging (SBDA-u) method.
\label{alg:sbda-u}}
\end{algorithm}

The following theorem illustrates an important relation to analyze
the convergence of SBDA-u. Throughout the analysis we assume the simple
function $\omega(x)$ is $\lambda$-strongly convex with modulus $\lambda$,
where $\lambda\ge0$. 
\begin{thm}
\label{thm-composite-conv}In algorithm \ref{alg:sbda-u}, if the
sequence $\left\{ \gamma_{t}\right\} $ satisfies the assumption (\ref{eq:assumption-gamma})
, then for any $x\in X$, we have 
\begin{eqnarray}
\sum_{t=1}^{T}\left(\alpha_{t-1}-\frac{n-1}{n}\alpha_{t}\right)\mathbf{E}\left[\phi\left(x_{t}\right)-\phi\left(x\right)\right] & \le & \alpha_{0}\frac{n-1}{n}\left[\phi\left(x_{0}\right)-\phi\left(x\right)\right]+\sum_{i=1}^{n}\mathbf{E}\left[\gamma_{T-1}^{\left(i\right)}\right]d_{i}\left(x\right)\nonumber \\
 &  & +\sum_{i=1}^{n}\frac{5M_{i}^{2}}{n}\sum_{t=0}^{T-1}\mathbf{E}\left[\frac{\alpha_{t}^{2}}{\left(\gamma_{t-1}^{\left(i\right)}\rho+l_{t-1}^{\left(i\right)}\lambda\right)}\right].\label{main-converg-composite}
\end{eqnarray}
\end{thm}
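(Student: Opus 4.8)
The plan is to run the standard dual-averaging potential argument, adapted to the block sampling. I would introduce the accumulated dual-averaging functional
$$\psi_t(x) = \langle \bar G_t, x\rangle + \sum_{i=1}^n l_t^{(i)}\omega_i(x^{(i)}) + \sum_{i=1}^n \gamma_t^{(i)} d_i(x^{(i)}), \qquad \bar G_t = \sum_{s=0}^t \alpha_s U_{i_s} G^{(i_s)}(x_s,\xi_s),$$
with $\psi_{-1}(x) = \sum_i \gamma_{-1}^{(i)} d_i(x^{(i)})$. The first thing to establish is the identity $x_{t+1} = \arg\min_{x\in X}\psi_t(x)$: even though only block $i_t$ is touched, the running iterate is the \emph{exact} global minimizer of the full functional. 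This holds because $\bar G^{(j)}$, $l^{(j)}$, and $\gamma^{(j)}$ are all frozen between two consecutive samplings of block $j$, so by induction each block of $x_{t+1}$ equals the block-minimizer of \eqref{eq:dual-averaging} under the current coefficients; block separability of $\psi_t$ then upgrades this to a global minimizer.

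Second, I derive a one-step lower bound on $U_t := \psi_t(x_{t+1}) = \min_x\psi_t(x)$. Since $x_t$ and $x_{t+1}$ differ only in coordinate $i_t$, write $x_{t+1} = x_t + U_{i_t} y$ with $y = x_{t+1}^{(i_t)} - x_t^{(i_t)}$. Applying Lemma~\ref{lem:prox-ineq-1} to the $(\gamma_{t-1}^{(i_t)}\rho + l_{t-1}^{(i_t)}\lambda)$-strongly convex $\psi_{t-1}$ (whose minimizer is $x_t$) at the point $x_{t+1}$ yields
$$\psi_{t-1}(x_{t+1}) \ge U_{t-1} + \tfrac12\bigl(\gamma_{t-1}^{(i_t)}\rho + l_{t-1}^{(i_t)}\lambda\bigr)\|y\|_{(i_t)}^2.$$
Adding the increment $\psi_t-\psi_{t-1} = \alpha_t\langle U_{i_t}G^{(i_t)}_t,\cdot\rangle + \alpha_t\omega_{i_t}(\cdot) + (\gamma_t^{(i_t)}-\gamma_{t-1}^{(i_t)})d_{i_t}(\cdot)$ evaluated at $x_{t+1}$ and converting those terms back from $x_{t+1}^{(i_t)}$ to $x_t^{(i_t)}$, the cross term $\alpha_t\langle G_t^{(i_t)},y\rangle$ (together with the convexity slack of $\omega_{i_t}$) is absorbed by the quadratic via Young's inequality, leaving an error of order $\alpha_t^2\|G_t^{(i_t)}\|_{(i_t),*}^2/(\gamma_{t-1}^{(i_t)}\rho + l_{t-1}^{(i_t)}\lambda)$. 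Lemma~\ref{lem:function-bound} is what permits replacing the single-block subgradient by full-iterate function values, and its $M_{i_t}\|y\|_{(i_t)}$ terms are the source of the constant $5$ once everything is combined through Young. (Lemma~\ref{lem:prox-ineq-2} gives the same estimate directly, at the price of an auxiliary minimizer and of using $\gamma_t\ge\gamma_{t-1}$, $l_t\ge l_{t-1}$ from \eqref{eq:assumption-gamma} to downgrade the modulus to its time-$(t-1)$ value.)

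Third, I take the conditional expectation over the uniformly sampled $i_t$: each block is hit with probability $1/n$, so $\mathbf E_{i_t}[\langle G_t^{(i_t)},x_t^{(i_t)}\rangle]=\tfrac1n\langle g(x_t),x_t\rangle$, $\mathbf E_{i_t}[\alpha_t\omega_{i_t}(\cdot)]=\tfrac{\alpha_t}{n}\omega(\cdot)$, and $\mathbf E[\|G_t^{(i_t)}\|_{(i_t),*}^2]\le M_{i_t}^2$ by \eqref{ass-grad-bound}, which produces both the $\tfrac1n$ factors and the $\sum_i \tfrac{5M_i^2}{n}(\cdot)$ error structure. Summing the recursion over $t=0,\dots,T-1$ telescopes $U_t$; bounding $U_{T-1}=\min_x\psi_{T-1}(x)\le\psi_{T-1}(x)$ produces the $\sum_i\mathbf E[\gamma_{T-1}^{(i)}]d_i(x)$ term together with the accumulated $\sum_i\mathbf E[l_{T-1}^{(i)}]\omega_i(x)$, while convexity of $f$ converts the accumulated linear terms into $f(x_t)-f(x)$. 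The crux, and the main obstacle, is reconciling the two different rates at which $f(x_t)$ and $\omega(x_t)$ enter the sum: the gradient terms carry weight $\alpha_t/n$, whereas the accumulated $\omega$-coefficient $l_t^{(i)}$ grows only on sampled steps. Merging them into a single $\phi(x_t)-\phi(x)$ forces a summation-by-parts in which $x_t$ inherits contributions from both step $t$ and step $t+1$; this is precisely what generates the weights $\alpha_{t-1}-\tfrac{n-1}{n}\alpha_t$ and, from the unbalanced endpoint of the telescoping, the boundary term $\alpha_0\tfrac{n-1}{n}[\phi(x_0)-\phi(x)]$. The remainder is careful bookkeeping of these weights and checking that the discarded terms ($U_{-1}\ge0$ and the leftover nonnegative quadratics) carry the correct sign.
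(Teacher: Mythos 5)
Your proposal is correct and follows essentially the same route as the paper: the same dual-averaging potential $\Psi_t$ with $x_{t+1}$ as its exact global minimizer, Lemma~\ref{lem:prox-ineq-1} for the one-step strong-convexity bound, Lemma~\ref{lem:function-bound} to pass from the single-block subgradient to full function values (with the quadratic/Young step producing the constant $5$ from $(\|g_t-G_t\|+2M_{i_t})^2\le 10M_{i_t}^2$ in expectation), the $1/n$ factors from uniform sampling, and the final summation-by-parts yielding the weights $\alpha_{t-1}-\tfrac{n-1}{n}\alpha_t$ and the boundary term $\alpha_0\tfrac{n-1}{n}[\phi(x_0)-\phi(x)]$. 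The only cosmetic difference is that the paper folds the constants $F(x_s,\xi_s)-\langle G_s,x_s\rangle_{(i_s)}$ into $\Psi_t$, which does not change the minimizer.
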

\begin{proof}
Firstly, to simplify the notation, when there is no ambiguity, we
use the terms $\omega_{i}\left(x\right)$ and $\omega_{i}\left(x^{\left(i\right)}\right)$,
$d_{i}\left(x\right)$ and $d_{i}\left(x^{\left(i\right)}\right)$,
$\mathcal{V}_{i}\left(x,y\right)$ and $\mathcal{V}_{i}\left(x^{\left(i\right)},y^{\left(i\right)}\right)$
interchangeably. In addition, we denote $\omega_{i^{c}}\left(x\right)=\omega\left(x\right)-\omega_{i}\left(x\right)$,
and an auxiliary function by 
\begin{equation}
\Psi_{t}\left(x\right)=\begin{cases}
\sum_{s=0}^{t}\alpha_{s}\left[F\left(x_{s},\xi_{s}\right)+\left\langle G_{s},x-x_{s}\right\rangle _{\left(i_{s}\right)}+\omega_{i_{s}}\left(x\right)\right]+\sum_{i=1}^{n}\gamma_{t}^{\left(i\right)}d_{i}\left(x^{\left(i\right)}\right), & t\ge0\\
\sum_{i=1}^{n}\gamma_{t}^{\left(i\right)}d_{i}\left(x^{\left(i\right)}\right) & t=-1
\end{cases}.\label{eq:defi-Psi-func}
\end{equation}
It can be easily seen from the definition that $x_{t+1}$ is the minimizer
of the problem $\min_{x\in X}\Psi_{t}\left(x\right)$.  Moreover,
by the assumption on $\left\{ \gamma_{t}\right\} $, we obtain

\begin{equation}
\Psi_{t}\left(x\right)-\Psi_{t-1}\left(x\right)\ge\alpha_{t}\left[F\left(x_{t},\xi_{t}\right)+\left\langle G_{t},x-x_{t}\right\rangle _{\left(i_{t}\right)}+\omega_{i_{t}}\left(x\right)\right].\quad t=0,1,2,\ldots\label{eq:recursive-Psi}
\end{equation}
Applying Lemma \ref{lem:function-bound} and the property \eqref{recursive-Psi}
at $x=x_{t+1}$, we have 
\begin{eqnarray*}
\phi\left(x_{t+1}\right) & \le & f\left(x_{t}\right)+\left\langle g_{t},x_{t+1}-x_{t}\right\rangle +2M_{i_{t}}\|x_{t+1}-x_{t}\|_{\left(i_{t}\right)}+\omega\left(x_{t+1}\right)\\
 & = & F\left(x_{t},\xi_{t}\right)+\left\langle G_{t},x_{t+1}-x_{t}\right\rangle _{\left(i_{t}\right)}+2M_{i_{t}}\|x_{t+1}-x_{t}\|_{\left(i_{t}\right)}\\
 &  & +f\left(x_{t}\right)-F\left(x_{t},\xi_{t}\right)+\left\langle g_{t}-G_{t},x_{t+1}-x_{t}\right\rangle _{\left(i_{t}\right)}+\omega\left(x_{t+1}\right)\\
 & \le & \frac{1}{\alpha_{t}}\underset{\Delta_{1}}{\underbrace{\left[\Psi_{t}\left(x_{t+1}\right)-\Psi_{t-1}\left(x_{t+1}\right)+\frac{\gamma_{t-1}^{\left(i_{t}\right)}\rho+l_{t-1}^{\left(i_{t}\right)}\lambda}{2}\|x_{t+1}-x_{t}\|_{\left(i_{t}\right)}^{2}\right]}}\\
 &  & +f\left(x_{t}\right)-F\left(x_{t},\xi_{t}\right)+\omega_{i_{t}^{c}}\left(x_{t+1}\right)\\
 &  & +\underset{\Delta_{2}}{\underbrace{\left\langle g_{t}-G_{t},x_{t+1}-x_{t}\right\rangle _{\left(i_{t}\right)}-\frac{\gamma_{t-1}^{\left(i_{t}\right)}\rho+l_{t-1}^{\left(i_{t}\right)}\lambda}{2\alpha_{t}}\|x_{t+1}-x_{t}\|_{\left(i_{t}\right)}^{2}+2M_{i_{t}}\|x_{t+1}-x_{t}\|_{\left(i_{t}\right)}}}.
\end{eqnarray*}

We proceed with the analysis by separately taking care of $\Delta_{1}$
and $\Delta_{2}$.  We first provide a concrete bound on $\Delta_{1}$.
Applying Lemma~\ref{lem:prox-ineq-1} for  $\Psi=\Psi_{t-1}$ with
$x_{t}$ being the optimal point $x=x_{t+1}$, we obtain 
\begin{equation}
\Psi_{t-1}\left(x_{t+1}\right)\ge\Psi_{t-1}\left(x_{t}\right)+\sum_{i=1}^{n}\gamma_{t-1}^{\left(i\right)}\mathcal{V}_{i}\left(x_{t},x_{t+1}\right)+\frac{l_{t-1}^{\left(i_{t}\right)}\lambda}{2}\|x_{t}-x_{t+1}\|_{\left(i_{t}\right)}^{2}.\label{eq:bound3}
\end{equation}
In view of (\ref{eq:bound3}) and the assumption $\mathcal{V}_{i}\left(x_{t},x_{t+1}\right)\ge\frac{\rho}{2}\|x_{t+1}-x_{t}\|_{\left(i\right)}^{2}$,
we obtain an upper bound on $\Delta_{1}$: $\Delta_{1}\le\Psi_{t}\left(x_{t+1}\right)-\Psi_{t-1}\left(x_{t}\right).$
On the other hand, from the Cauchy-Schwarz inequality, we have $\left\langle g_{t}-G_{t},x_{t+1}-x_{t}\right\rangle _{\left(i_{t}\right)}\le\|g_{t}-G_{t}\|_{\left(i_{t}\right),*}\cdot\|x_{t+1}-x_{t}\|_{\left(i_{t}\right)}$.
Then 
\[
\Delta_{2}\le\|x_{t+1}-x_{t}\|_{\left(i_{t}\right)}\cdot\left(\|g_{t}-G_{t}\|_{\left(i_{t}\right)}+2M_{i_{t}}\right)-\frac{\gamma_{t-1}^{\left(i_{t}\right)}\rho+l_{t-1}^{\left(i_{t}\right)}\lambda}{2\alpha_{t}}\|x_{t+1}-x_{t}\|_{\left(i_{t}\right)}^{2}.
\]
The right side of the above inequality is a quadratic function of
$\|x_{t+1}-x_{t}\|_{\left(i_{t}\right)}$. By maximizing it, we obtain
\[
\Delta_{2}\le\frac{\alpha_{t}\left(\|g_{t}-G_{t}\|_{\left(i_{t}\right)}+2M_{i_{t}}\right)^{2}}{2\left(\gamma_{t-1}^{\left(i_{t}\right)}\rho+l_{t-1}^{\left(i_{t}\right)}\lambda\right)}.
\]
In view of these bounds on $\Delta_{1}$ and $\Delta_{2}$, and the
fact that $\omega_{i_{t}^{c}}\left(x_{t}\right)=\omega_{i_{t}^{c}}\left(x_{t+1}\right)$,
we have 
\begin{eqnarray}
\alpha_{t}\phi\left(x_{t+1}\right) & \le & \Psi_{t}\left(x_{t+1}\right)-\Psi_{t-1}\left(x_{t}\right)+\alpha_{t}\left[f\left(x_{t}\right)-F\left(x_{t},\xi_{t}\right)+\omega_{i_{t}^{c}}\left(x_{t}\right)\right]\nonumber \\
 &  & +\frac{\alpha_{t}^{2}\left(\|g_{t}-G_{t}\|_{\left(i_{t}\right)}+2M_{i}\right)^{2}}{\gamma_{t-1}^{\left(i_{t}\right)}\rho+l_{t-1}^{\left(i_{t}\right)}\lambda}.\label{eq:bound-06}
\end{eqnarray}
Summing up the above for $t=0,1,\ldots,T-1$, and observing that $\Psi_{-1}\ge0$,
$d_{i}\left(x_{0}\right)\ge0$ $\left(1\le i\le n\right)$, we obtain
\begin{eqnarray}
\sum_{t=0}^{T-1}\alpha_{t}\phi\left(x_{t+1}\right) & \le & \Psi_{T-1}\left(x_{T}\right)+\sum_{t=0}^{T-1}\frac{\alpha_{t}^{2}\left(\|g_{t}-G_{t}\|_{\left(i_{t}\right)}+2M_{i}\right)^{2}}{\gamma_{t-1}^{\left(i_{t}\right)}\rho+l_{t-1}^{\left(i_{t}\right)}\lambda}\nonumber \\
 &  & +\sum_{t=0}^{T-1}\alpha_{t}\left[f\left(x_{t}\right)-F\left(x_{t},\xi_{t}\right)+\omega_{i_{t}^{c}}\left(x_{t}\right)\right].\label{eq:bound-02}
\end{eqnarray}
Due to the optimality of $x_{T}$, for $x\in X$, we have 
\begin{eqnarray}
\Psi_{T-1}\left(x_{T}\right) & \le & \Psi_{T-1}\left(x\right)\nonumber \\
 & = & \sum_{t=0}^{T-1}\alpha_{t}\left[f\left(x_{t}\right)+\frac{1}{n}\left\langle g_{t},x-x_{t}\right\rangle +\omega_{i_{t}}\left(x^{*}\right)\right]+\sum_{i=1}^{n}\gamma_{T-1}^{\left(i\right)}d_{i}\left(x\right)\nonumber \\
 &  & +\sum_{t=0}^{T-1}\alpha_{t}\left[\left\langle G_{t},x-x_{t}\right\rangle _{\left(i_{t}\right)}-\frac{1}{n}\left\langle g_{t},x-x_{t}\right\rangle \right]\nonumber \\
 & \le & \sum_{t=0}^{T-1}\alpha_{t}\left[\frac{n-1}{n}f\left(x_{t}\right)+\frac{1}{n}f\left(x\right)+\omega_{i_{t}}\left(x\right)\right]+\sum_{i=1}^{n}\gamma_{T-1}^{\left(i\right)}d_{i}\left(x\right)\nonumber \\
 &  & +\sum_{t=0}^{T-1}\alpha_{t}\left[\left\langle G_{t},x-x_{t}\right\rangle _{\left(i_{t}\right)}-\frac{1}{n}\left\langle g_{t},x-x_{t}\right\rangle \right],\label{eq:Phi-bound}
\end{eqnarray}
where the last inequality follows from the convexity of $f$:$\left\langle g_{t},x-x_{t}\right\rangle \le f\left(x\right)-f\left(x_{t}\right)$.
Putting (\ref{eq:bound-02}) and (\ref{eq:Phi-bound}) together yields
\begin{eqnarray}
\sum_{t=0}^{T-1}\alpha_{t}\phi\left(x_{t+1}\right) & \le & \sum_{t=0}^{T-1}\alpha_{t}\left[\frac{n-1}{n}\phi\left(x_{t}\right)+\frac{1}{n}\phi\left(x\right)\right]+\sum_{i=1}^{n}\gamma_{T-1}^{\left(i\right)}d_{i}\left(x\right)\nonumber \\
 &  & +\sum_{0}^{T-1}\frac{\alpha_{t}^{2}\left(\|g_{t}-G_{t}\|_{\left(i_{t}\right)}+2M_{i}\right)^{2}}{2\left(\gamma_{t-1}^{\left(i_{t}\right)}\rho+l_{t-1}^{\left(i_{t}\right)}\lambda\right)}+\delta_{T},\label{eq:aux4}
\end{eqnarray}
where  $\delta_{T}$ is defined by

\begin{eqnarray}
\delta_{T} & = & \sum_{t=0}^{T-1}\alpha_{t}\left[\left\langle G_{t},x-x_{t}\right\rangle _{\left(i_{t}\right)}-\frac{1}{n}\left\langle g_{t},x-x_{t}\right\rangle +f\left(x_{t}\right)-F\left(x_{t},\xi_{t}\right)\right]\nonumber \\
 &  & +\sum_{t=0}^{T-1}\alpha_{t}\left[\omega_{i_{t}^{c}}\left(x_{t}\right)-\frac{n-1}{n}\omega\left(x_{t}\right)+\omega_{i_{t}}\left(x\right)-\frac{1}{n}\omega\left(x\right)\right].\label{eq:defi-delta}
\end{eqnarray}
In (\ref{eq:aux4}), subtracting $\sum_{t=0}^{T-1}\phi\left(x\right)$,
and then $\frac{n-1}{n}\sum_{t=1}^{T}\alpha_{t}\left[\phi\left(x_{t}\right)-\phi\left(x\right)\right]$
on both sides , one has 
\begin{eqnarray}
\sum_{t=1}^{T}\left(\alpha_{t-1}-\frac{n-1}{n}\alpha_{t}\right)\left[\phi\left(x_{t}\right)-\phi\left(x\right)\right] & \le & \frac{n-1}{n}\alpha_{0}\left[\phi\left(x_{0}\right)-\phi\left(x\right)\right]+\sum_{i=1}^{n}\gamma_{T-1}^{\left(i\right)}d_{i}\left(x\right)\nonumber \\
 &  & +\delta_{T}+\sum_{0}^{T-1}\frac{\alpha_{t}^{2}\left(\|g_{t}-G_{t}\|_{\left(i_{t}\right)}+2M_{i_{t}}\right)^{2}}{2\left(\gamma_{t-1}^{\left(i_{t}\right)}\rho+l_{t-1}^{\left(i_{t}\right)}\lambda\right)}.\label{eq:bound-05}
\end{eqnarray}

Now let us take the expectation on  both sides of (\ref{eq:bound-05}).
Firstly, taking the expectation with respect to $i_{t}$, for $t=0,1,...,T-1$,
we have $\mathbf{E}_{i_{t}}\left[\left\langle G_{t},x^{*}-x_{t}\right\rangle _{\left(i_{t}\right)}\right]=\frac{1}{n}\left\langle G_{t},x^{*}-x_{t}\right\rangle $,
and $\mathbf{E}_{i_{t}}\left[\omega_{i_{t}^{c}}\left(x_{t}\right)\right]=\omega\left(x_{t}\right)-\mathbf{E}_{i_{t}}\left[\omega_{i_{t}}\left(x_{t}\right)\right]=\frac{n-1}{n}\omega\left(x_{t}\right)$.
Moreover, by the assumptions $\mathbf{E}_{\xi_{t}}\left[F\left(x_{t},\xi_{t}\right)\right]=f\left(x_{t}\right)$,
$\mathbf{E}_{\xi_{t}}\left[G\left(x_{t},\xi_{t}\right)\right]=g\left(x_{t}\right)$.
Together with the definition (\ref{eq:defi-delta}), we see $E\left[\delta_{t}\right]=0$.
In addition, from the Cauchy-Schwarz inequality, we have $\left(\|g_{t}-G_{t}\|_{\left(i_{t}\right)}+2M_{i_{t}}\right)^{2}\le2\left(\|g_{t}-G_{t}\|_{\left(i_{t}\right)}^{2}+4M_{i_{t}}^{2}\right)$,
and  the expectation $E_{\xi_{t}}\left[\|g_{t}-G_{t}\|_{\left(i_{t}\right)}^{2}\right]\le E_{\xi_{t}}\|G_{t}\|_{\left(i_{t}\right)}^{2}\le M_{i_{t}}^{2}$.
Furthermore, since $\xi_{t}$ is independent of $\gamma_{t-1}$ and
$l_{t-1}$, we have 
\begin{eqnarray*}
\mathbf{E}\left[\frac{\left(\|g_{t}-G_{t}\|_{\left(i_{t}\right)}+2M_{i}\right)^{2}}{\gamma_{t-1}^{\left(i_{t}\right)}\rho+l_{t-1}^{\left(i_{t}\right)}\lambda}\right] & \le & \mathbf{E}\left[\mathbf{E}_{\xi_{t}}\left(\frac{2\left(\|g_{t}-G_{t}\|_{\left(i_{t}\right)}^{2}+4M_{i_{t}}^{2}\right)}{\gamma_{t-1}^{\left(i_{t}\right)}\rho+l_{t-1}^{\left(i_{t}\right)}\lambda}\right)\right]\\
 & \le & \mathbf{E}\left[\left(\frac{10M_{i_{t}}^{2}}{\gamma_{t-1}^{\left(i_{t}\right)}\rho+l_{t-1}^{\left(i_{t}\right)}\lambda}\right)\right]\\
 & = & \sum_{i=1}^{n}\mathbf{E}\left[\frac{10M_{i}^{2}}{n\left(\gamma_{t-1}^{\left(i\right)}\rho+l_{t-1}^{\left(i\right)}\lambda\right)}\right].
\end{eqnarray*}
Using these results, we obtain

\begin{eqnarray*}
\sum_{t=1}^{T}\left(\alpha_{t-1}-\frac{n-1}{n}\alpha_{t}\right)\mathbf{E}\left[\phi\left(x_{t}\right)-\phi\left(x\right)\right] & \le & \alpha_{0}\frac{n-1}{n}\left[\phi\left(x_{0}\right)-\phi\left(x\right)\right]+\sum_{i=1}^{n}\mathbf{E}\left[\gamma_{T-1}^{\left(i\right)}\right]d_{i}\left(x\right)\\
 &  & +\sum_{i=1}^{n}\frac{5M_{i}^{2}}{n}\sum_{t=0}^{T-1}\mathbf{E}\left[\frac{\alpha_{t}^{2}}{\left(\gamma_{t-1}^{\left(i\right)}\rho+l_{t-1}^{\left(i\right)}\lambda\right)}\right].
\end{eqnarray*}

\end{proof}
In Theorem~\ref{thm-composite-conv} we presented some general convergence
properties of SBDA-u for both stochastic convex and strongly convex
functions. It should be noted that the right side of (\ref{main-converg-composite})
employs expectations since both $\left\{ \gamma_{t}\right\} $ and
$\left\{ l_{t}\right\} $ can be  random. In the sequel, we describe
more specialized convergence rates for both cases. Let us take $x=x^{*}$
and use the assumption (\ref{eq:assum-D}) throughout the analysis.

\subsubsection*{Convergence rate when $\omega(x)$ is a simple convex function}

Firstly, we consider a constant stepsize policy and assume that $\gamma_{t}^{\left(i\right)}$
depends on $i$ and $T$ where $T$ is the iteration number. More
specifically, let $\alpha_{t}\equiv1$, and $\gamma_{t}^{\left(i\right)}\equiv\beta_{i}$
for some $\beta_{i}>0$,$1\le i\le n$, $-1\le t\le T$. Then $\mathbf{E}\left[\frac{\alpha_{t}^{2}}{\gamma_{t-1}^{\left(i\right)}\rho}\right]=\frac{1}{\rho\beta_{i}}$,
for $1\le i\le n$, and hence 
\[
\sum_{t=1}^{T}\mathbf{E}\left[\phi\left(x_{t}\right)-\phi\left(x^{*}\right)\right]\le\left(n-1\right)\left[\phi\left(x_{0}\right)-\phi\left(x^{*}\right)\right]+n\sum_{i=1}^{n}\beta_{i}D_{i}+T\sum_{i=1}^{n}\frac{5M_{i}^{2}}{\rho\beta_{i}}.
\]
Let us choose $\beta_{i}=\sqrt{\frac{5TM_{i}^{2}}{n\rho D_{i}}}$
for $i=1,2,\ldots,p$, to optimize the above function. We obtain an
upper bound on the error term: 
\[
\sum_{t=1}^{T}\mathbf{E}\left[\phi\left(x_{t}\right)-\phi\left(x^{*}\right)\right]\le\left(n-1\right)\left[\phi\left(x_{0}\right)-\phi\left(x^{*}\right)\right]+2\sqrt{\frac{5Tn}{\rho}}\sum_{i=1}^{n}\sqrt{M_{i}^{2}D_{i}}.
\]
If we use the average point $\bar{x}=\sum_{t=1}^{T}x_{t}/T$ as the
output, we obtain the expected optimization error: 
\[
\mathbf{E}\left[\phi\left(\bar{x}\right)-\phi\left(x^{*}\right)\right]\le\frac{n-1}{T}\left[\phi\left(x_{0}\right)-\phi\left(x^{*}\right)\right]+\frac{2\sqrt{5n}\left[\sum_{i=1}^{n}\sqrt{M_{i}^{2}D_{i}}\right]}{\sqrt{\rho}\sqrt{T}}.
\]

In addition, we can also choose varying stepsizes without knowing
ahead the iteration number $T$. Differing from traditional stepsize
policies where $\gamma_{t}$ is usually associated with $t$, here
$\left\{ \gamma_{t}^{\left(i\right)}\right\} $ is a random sequence
dependent on both $t$ and $i_{t}$. In order to establish the convergence
rate with such a randomized $\gamma_{t}$, we first state a useful
technical result.
\begin{lem}
\label{lemma-recursive-sum}Let $p$ be a real number with $0<p<1$,
$\left\{ a_{s}\right\} $ and $\left\{ b_{t}\right\} $ be sequences
of nonnegative numbers satisfying the relation: 
\[
a_{t}=pb_{t}+\left(1-p\right)a_{t-1},\quad t=1,2,\ldots
\]
Then 
\[
\sum_{s=0}^{t}a_{s}\le\sum_{s=1}^{t}b_{s}+\frac{a_{0}}{p}.
\]

\end{lem}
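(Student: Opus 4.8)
The plan is to rearrange the defining recursion so that each $b_s$ is expressed purely in terms of $a_s$ and $a_{s-1}$, and then sum over $s$ to exploit telescoping. First I would solve the relation $a_t = p b_t + (1-p)a_{t-1}$ for $b_t$, which is legitimate because $p \neq 0$, obtaining $b_t = \frac{1}{p}a_t - \frac{1-p}{p}a_{t-1}$ for every $t \ge 1$.

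Next I would sum this identity from $s=1$ to $t$ and introduce the shorthand $S := \sum_{s=0}^t a_s$. The two sums that appear rewrite cleanly as $\sum_{s=1}^t a_s = S - a_0$ and $\sum_{s=1}^t a_{s-1} = \sum_{s=0}^{t-1} a_s = S - a_t$. Substituting these and collecting the coefficient of $S$, the key cancellation is $\frac{1}{p} - \frac{1-p}{p} = \frac{1-(1-p)}{p} = 1$, so the $S$ contributions combine into exactly $S$. This leaves the exact identity $\sum_{s=1}^t b_s = S - \frac{a_0}{p} + \frac{1-p}{p}a_t$.

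Rearranging yields $S = \sum_{s=1}^t b_s + \frac{a_0}{p} - \frac{1-p}{p}a_t$, and the claimed bound follows at once: since $0 < p < 1$ we have $\frac{1-p}{p} \ge 0$, and since $a_t \ge 0$ by hypothesis, the term $-\frac{1-p}{p}a_t$ is nonpositive and may be discarded, giving $\sum_{s=0}^t a_s \le \sum_{s=1}^t b_s + \frac{a_0}{p}$.

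There is no genuine analytic obstacle here; the entire content is the telescoping bookkeeping, and the only subtlety is making sure the coefficient algebra collapses correctly. The one place the hypotheses are truly needed is the final step, where both $0<p<1$ (so that $\frac{1-p}{p}\ge 0$) and the nonnegativity of $a_t$ are required in order to drop the residual term; without the sign of $a_t$ controlled, the inequality could fail. An equivalent route would be a short induction on $t$, but the telescoping identity is more transparent and has the added benefit of exhibiting the exact slack $\frac{1-p}{p}a_t$ in the bound.
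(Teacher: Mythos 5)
Your proof is correct, and it takes a genuinely different route from the paper's. The paper works with the partial sums $A_t=\sum_{s=0}^t a_s$ and $B_t=\sum_{s=1}^t b_s$, observes that they satisfy the recursion $A_t = pB_t + A_0 + (1-p)A_{t-1}$, unrolls it, bounds each $B_s$ by $B_t$ (which uses nonnegativity of the $b_s$), and finishes with the geometric series estimate $\sum_{s=0}^t(1-p)^s\le 1/p$. You instead solve the recursion for $b_t$ and telescope, arriving at the \emph{exact} identity
\[
\sum_{s=0}^{t}a_{s}=\sum_{s=1}^{t}b_{s}+\frac{a_{0}}{p}-\frac{1-p}{p}\,a_{t},
\]
after which the claim follows by discarding the nonpositive residual $-\frac{1-p}{p}a_t$. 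Your algebra checks out (the coefficient of $S$ collapses to $\frac{1}{p}-\frac{1-p}{p}=1$ as you say). What your approach buys: it is a clean two-line computation with no geometric series, it exhibits the exact slack in the bound, and it needs strictly weaker hypotheses --- only $a_t\ge 0$ for the final index $t$ is used, whereas the paper's argument leans on monotonicity of $B_s$ in $s$ and hence on nonnegativity of all the $b_s$. Under the lemma's stated hypotheses both arguments are valid, so this extra generality is a bonus rather than a necessity.
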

We first let $\alpha_{t}\equiv1$, and define $\left\{ \gamma_{t}\right\} $
recursively as 
\[
\gamma_{t}^{\left(i\right)}=\begin{cases}
u_{i}\sqrt{t+1} & i=i_{t}\\
\gamma_{t-1}^{\left(i\right)} & i\neq i_{t}
\end{cases},
\]
for some $u_{i}>0$, $i=1,2,...,n$, $t=0,1,2,\ldots,T$. From this
definition, we obtain 
\[
\mathbf{E}\left[\frac{1}{\gamma_{t-1}^{\left(i\right)}}\right]=\frac{1}{n}\frac{1}{u_{i}\sqrt{t}}+\frac{n-1}{n}\mathbf{E}\left[\frac{1}{\gamma_{t-2}^{\left(i\right)}}\right].
\]
Observing the fact that $\sum_{\tau=1}^{t}\frac{1}{\sqrt{\tau}}\le\int_{1}^{t+1}\frac{1}{\sqrt{x}}dx=2\sqrt{t+1}$
and applying Lemma~\ref{lemma-recursive-sum} with $a_{t}=\mathbf{E}\left[\frac{1}{\gamma_{t-1}^{\left(i\right)}}\right]$
and $b_{t}=\frac{1}{u_{i}\sqrt{t}}$, we have 
\[
\sum_{\tau=0}^{t}\mathbf{E}\left[\frac{1}{\gamma_{\tau-1}^{\left(i\right)}}\right]\le\frac{1}{u_{i}}\sum_{\tau=1}^{t}\frac{1}{\sqrt{\tau}}+\frac{n}{\gamma_{-1}^{\left(i\right)}}\le\frac{2\sqrt{t+1}}{u_{i}}+\frac{n}{\gamma_{-1}^{\left(i\right)}}.
\]
Hence
\begin{equation}
\sum_{t=0}^{T-1}\mathbf{E}\left[\frac{1}{\gamma_{t-1}^{\left(i\right)}\rho}\right]\le\frac{1}{\rho}\left[\frac{2\sqrt{T}}{u_{i}}+\frac{n}{\gamma_{-1}^{\left(i\right)}}\right],\quad i=1,2,\ldots n.\label{ineq:auxi}
\end{equation}
With respect to (\ref{ineq:auxi}) and Theorem 1 , we obtain 
\[
\sum_{i=1}^{n}\mathbf{E}\left[\gamma_{T-1}^{\left(i\right)}\right]d_{i}\left(x^{*}\right)+\sum_{t=0}^{T-1}\sum_{i=1}^{n}\mathbf{E}\left[\frac{5\alpha_{t}^{2}M_{i}^{2}}{n\gamma_{t-1}^{\left(i\right)}\rho}\right]\le\sum_{i=1}^{n}u_{i}\sqrt{T}D_{i}+\sum_{i=1}^{n}\left\{ \frac{5M_{i}^{2}}{n\rho}\left[\frac{2\sqrt{T}}{u_{i}}+\frac{n}{\gamma_{-1}^{\left(i\right)}}\right]\right\} .
\]
Choosing $u_{i}=\sqrt{\frac{10M_{i}^{2}}{n\rho D_{i}}}$, we have

\[
\mathbf{E}\left[\phi\left(\bar{x}\right)-\phi\left(x^{*}\right)\right]\le\frac{n-1}{T}\left[\phi\left(x_{0}\right)-\phi\left(x^{*}\right)\right]+\sum_{i=1}^{n}\frac{5nM_{i}^{2}}{\rho\gamma_{-1}^{\left(i\right)}T}+\frac{2\sum_{i=1}^{n}\sqrt{10nM_{i}^{2}D_{i}}}{\sqrt{\rho}\sqrt{T}}.
\]

We summarize the results in the following corollary:
\begin{cor}
\label{cor:conv-rate}In algorithm \ref{alg:sbda-u}, let $T>0$,
$\bar{x}$ be the average point $\bar{x}=\sum_{t=1}^{T}x_{t}/T$,
and $\alpha_{t}\equiv1$.\end{cor}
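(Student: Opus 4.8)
The plan is to treat this corollary as a bookkeeping consequence of Theorem~\ref{thm-composite-conv} specialized to the simple-convex regime $\lambda=0$: the proof amounts to feeding the two proposed stepsize schedules into the master inequality \eqref{main-converg-composite} and then transferring the resulting bound on the weighted iterate sum to the output $\bar x$. First I would record the structural simplifications that hold once $\alpha_t\equiv1$ and $\lambda=0$. The output weights collapse, since $\alpha_{t-1}-\frac{n-1}{n}\alpha_t=\frac1n$ for every $t$, so $\sum_{t=1}^T(\alpha_{t-1}-\frac{n-1}{n}\alpha_t)=T/n$ and the algorithm's output reduces to the uniform average $\bar x=\frac1T\sum_{t=1}^T x_t$ in the corollary; simultaneously every denominator $\gamma_{t-1}^{(i)}\rho+l_{t-1}^{(i)}\lambda$ reduces to $\gamma_{t-1}^{(i)}\rho$. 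With these in hand, convexity of $\phi$ (Jensen's inequality) gives $\frac Tn\,\mathbf{E}[\phi(\bar x)-\phi(x^*)]\le\frac1n\sum_{t=1}^T\mathbf{E}[\phi(x_t)-\phi(x^*)]$, so any bound I prove on the weighted sum yields the claimed bound on $\mathbf{E}[\phi(\bar x)-\phi(x^*)]$ after multiplying by $n/T$.

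For the constant-stepsize assertion I would set $\gamma_t^{(i)}\equiv\beta_i$, so that $\mathbf{E}[\alpha_t^2/(\gamma_{t-1}^{(i)}\rho)]=1/(\rho\beta_i)$ is deterministic; the residual term in \eqref{main-converg-composite} then sums to $T\sum_i 5M_i^2/(\rho\beta_i)$ while the divergence term becomes $n\sum_i\beta_i D_i$ via $d_i(x^*)\le D_i$. Minimizing $n\beta_iD_i+5TM_i^2/(\rho\beta_i)$ block-by-block by AM--GM selects $\beta_i=\sqrt{5TM_i^2/(n\rho D_i)}$ and produces the balanced bound $2\sqrt{5Tn/\rho}\,\sum_i\sqrt{M_i^2D_i}$, which after division by $T$ is the advertised $O(\sqrt n\sum_i\sqrt{M_i^2D_i}/\sqrt T)$ rate plus the vanishing $O(1/T)$ initialization term.

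The delicate case is the block-adaptive schedule $\gamma_t^{(i)}=u_i\sqrt{t+1}$ applied only to the sampled block, because $\gamma_{t-1}^{(i)}$ is now a random variable depending on the whole sampling history, so $\mathbf{E}[1/\gamma_{t-1}^{(i)}]$ cannot be read off directly. The key step is to exploit the one-step update and the independence of $i_{t-1}$ from the past: with probability $1/n$ block $i$ was refreshed at time $t-1$, pinning $\gamma_{t-1}^{(i)}=u_i\sqrt t$, and otherwise $\gamma_{t-1}^{(i)}=\gamma_{t-2}^{(i)}$, which yields the linear recursion $\mathbf{E}[1/\gamma_{t-1}^{(i)}]=\frac1n\frac{1}{u_i\sqrt t}+\frac{n-1}{n}\mathbf{E}[1/\gamma_{t-2}^{(i)}]$. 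I would then apply Lemma~\ref{lemma-recursive-sum} with $p=1/n$, $a_t=\mathbf{E}[1/\gamma_{t-1}^{(i)}]$ and $b_t=1/(u_i\sqrt t)$, combined with the elementary estimate $\sum_{\tau=1}^t\tau^{-1/2}\le2\sqrt{t+1}$, to obtain $\sum_{t=0}^{T-1}\mathbf{E}[1/(\gamma_{t-1}^{(i)}\rho)]\le\rho^{-1}(2\sqrt T/u_i+n/\gamma_{-1}^{(i)})$. Substituting this into \eqref{main-converg-composite}, choosing $u_i=\sqrt{10M_i^2/(n\rho D_i)}$ to balance the two $\sqrt T$ contributions, and dividing by $T$ again delivers the $O(1/\sqrt T)$ rate together with a decaying $O(1/T)$ correction from the initialization $\gamma_{-1}^{(i)}$.

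I expect the recursive-expectation argument to be the main obstacle, both because it is where the randomness of the history-dependent stepsize must be controlled and because it requires carefully matching the algorithm's probabilistic update rule to the hypotheses of Lemma~\ref{lemma-recursive-sum} (in particular identifying the correct $p$, $a_t$, $b_t$, and initial value $a_0=1/\gamma_{-1}^{(i)}$). The remaining ingredients, namely the two AM--GM optimizations and the Jensen transfer to $\bar x$, are routine once $\mathbf{E}[1/\gamma_{t-1}^{(i)}]$ is under control.
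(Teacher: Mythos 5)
Your proposal is correct and follows essentially the same route as the paper: specialize Theorem~\ref{thm-composite-conv} with $\lambda=0$ and $\alpha_t\equiv1$ so the weights collapse to $1/n$, transfer to $\bar x$ by Jensen, optimize $\beta_i$ by AM--GM in the constant case, and in the adaptive case derive the recursion $\mathbf{E}[1/\gamma_{t-1}^{(i)}]=\frac1n\frac1{u_i\sqrt t}+\frac{n-1}n\mathbf{E}[1/\gamma_{t-2}^{(i)}]$ and invoke Lemma~\ref{lemma-recursive-sum} with exactly the $p=1/n$, $a_t$, $b_t$ the paper uses. No substantive differences.
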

\begin{enumerate}
\item If $\gamma_{t}^{\left(i\right)}=\sqrt{\frac{5TM_{i}^{2}}{n\rho D_{i}}}$,
for $t=0,1,2,...,T-1$, $i=1,2,...,n$, then 
\[
\mathbf{E}\left[\phi\left(\bar{x}\right)-\phi\left(x^{*}\right)\right]\le\frac{\left(n-1\right)\left[\phi\left(x_{0}\right)-\phi\left(x^{*}\right)\right]}{T}+\frac{2\sum_{i=1}^{n}\sqrt{5nM_{i}^{2}D_{i}}}{\sqrt{\rho}\sqrt{T}};
\]

\item If $\gamma_{t}^{\left(i\right)}=\begin{cases}
\sqrt{\frac{10M_{i}^{2}\left(t+1\right)}{n\rho D_{i}}} & \mbox{if }i=i_{t}\\
\gamma_{t-1}^{\left(i\right)} & \text{o.w.}
\end{cases}$, for $t=0,1,2,...,T-1$, and $\gamma_{-1}^{\left(i\right)}=\sqrt{\frac{10M_{i}^{2}}{n\rho D_{i}}}$,
$i=1,2,...,n$, then 
\[
\mathbf{E}\left[\phi\left(\bar{x}\right)-\phi\left(x^{*}\right)\right]\le\frac{n-1}{T}\left[\phi\left(x_{0}\right)-\phi\left(x^{*}\right)\right]+\sum_{i=1}^{n}\frac{5nM_{i}^{2}}{\rho\gamma_{-1}^{\left(i\right)}T}+\frac{2\sum_{i=1}^{n}\sqrt{10nM_{i}^{2}D_{i}}}{\sqrt{\rho}\sqrt{T}}.
\]

\end{enumerate}
Corollary~\ref{cor:conv-rate} provides both constant and adaptive
stepsizes and SBDA-u obtains a rate of convergence of \footnotesize
$O\left(1/\sqrt{T}\right)$\normalsize for both, which matches the
optimal rate for nonsmooth stochastic approximation {[}please see
(2.48) in \cite{Nemirovski:2009:RSA:1654243.1654247}{]}. In the context
of nonsmooth deterministic problem, it also matches the convergence
rate of the subgradient method. However, it is more interesting to
compare this with the convergence rate of BCD methods {[}please see,
for example, Corollary 2.2 part b) in \cite{Dang:arXiv1309.2249}{]}.
Ignoring the higher order terms, their convergence rate reads: \scriptsize$O\left(\frac{\sqrt{\sum_{i=1}^{n}M_{i}^{2}}}{\sqrt{T}}\sqrt{n\sum_{i=1}^{n}D_{i}}\right)$\normalsize.
Although the rate of $O\left(1/\sqrt{T}\right)$ is unimprovable,
it can be seen (using the Cauchy-Schwarz inequality) that 
\[
\sum_{i=1}^{n}\sqrt{M_{i}^{2}D_{i}}\le\sqrt{\sum_{i=1}^{n}M_{i}^{2}}\sqrt{\sum_{i=1}^{n}D_{i}},
\]
with the equality holding if and only if the ratio $M_{i}^{2}/D_{i}$
is equal to some positive constant, $1\le i\le n$. However, if this
ratio is very different in each coordinate block, SBDA-u is able to
obtain a much tighter bound. To see this point, consider the sequences
$\{M_{i}\}$ and $\{D_{i}\}$ such that $k$ items in $\left\{ M_{i}\right\} $
are $O(\tilde{M})$ for some integer $k$, $0<k\ll n$, while the
rest are $o\left(1/n\right)$ and $D_{i}$ is uniformly bounded by
$\tilde{D}$, $1\le i\le n$. Then the constant in SBDA-u is $O(\sqrt{n}k\tilde{M}\sqrt{\tilde{D}})$
while the one in SBMD is $O(n\sqrt{k}\tilde{M}\sqrt{\tilde{D}})$,
which is $\sqrt{n/k}$ times larger.

\subsubsection*{Convergence rate when $\omega\left(x\right)$ is strongly convex }

In this section, we investigate the convergence of SBDA-u when $\omega\left(x\right)$
is strongly convex with modulus $\lambda$, $\lambda>0$. More specifically,
we consider two averaging schemes and stepsize selections. In the
first approach, we apply a simple averaging scheme similar to \cite{Xiao:2010:DAM}.
By setting $\alpha_{t}\equiv1$,  all the past stochastic block subgradients
are weighted equally. In the second approach we apply a more aggressive
weighting scheme, which puts more weights on the later iterates. 

To prove the convergence of SBDA-u when $\omega\left(x\right)$ is
strongly convex, we introduce in the following lemma, a useful \textquotedblleft coupling\textquotedblright{}
property for Bernoulli random variables:
\begin{lem}
\label{lem:coupling} Let $r_{1},r_{2},r_{3}$ be i.i.d. samples from
$\text{Bernoulli}\left(p\right)$, $0<p<1$, $a,b>0$, and any $x$,
such that $0\le x\le a$, then 
\begin{equation}
\mathbf{E}\left[\frac{1}{r_{1}x+r_{2}\left(a-x\right)+b}\right]\le\mathbf{E}\left[\frac{1}{r_{3}a+b}\right].\label{ineq:coupling}
\end{equation}

\end{lem}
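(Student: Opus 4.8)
We need to show that for i.i.d. Bernoulli$(p)$ variables $r_1,r_2,r_3$, positive constants $a,b$, and any $x$ with $0\le x\le a$,
\[
\mathbf{E}\left[\frac{1}{r_1 x + r_2(a-x) + b}\right]\le \mathbf{E}\left[\frac{1}{r_3 a + b}\right].
\]
The left side depends on the joint distribution of $(r_1,r_2)$, which takes four values each with the stated Bernoulli probabilities, while the right side depends only on one Bernoulli variable. Let me think about what is really being compared...

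Let me enumerate. The function $\phi(s)=1/(s+b)$ is convex and decreasing on $s\ge 0$. On the left, the argument $s=r_1 x + r_2(a-x)$ takes the value $0$ with probability $(1-p)^2$, the value $x$ with probability $p(1-p)$, the value $a-x$ with probability $(1-p)p$, and the value $a$ with probability $p^2$. On the right, the argument $r_3 a$ takes value $0$ with probability $1-p$ and value $a$ with probability $p$.

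**The plan.** The natural approach is to compute both sides explicitly as finite sums and reduce the inequality to a pointwise statement about the convex function $\phi$. Writing $q=1-p$, the claim becomes
\[
q^2\phi(0) + pq\,\phi(x) + pq\,\phi(a-x) + p^2\phi(a)\ \le\ q\,\phi(0) + p\,\phi(a).
\]
First I would collect the $\phi(0)$ and $\phi(a)$ terms: the right minus the extreme terms on the left gives a coefficient $q-q^2=pq$ on $\phi(0)$ and $p-p^2=pq$ on $\phi(a)$. So the inequality is equivalent to
\[
pq\,\phi(x) + pq\,\phi(a-x)\ \le\ pq\,\phi(0) + pq\,\phi(a).
\]
Dividing through by the positive factor $pq$, the entire claim collapses to the single inequality
\[
\phi(x) + \phi(a-x)\ \le\ \phi(0) + \phi(a).
\]

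**The core inequality.** What remains is to prove $\phi(x)+\phi(a-x)\le \phi(0)+\phi(a)$ for $0\le x\le a$, where $\phi(s)=1/(s+b)$. This is a statement that the pairing $\{x,a-x\}$ is \emph{majorized} by the pairing $\{0,a\}$ (same sum $a$, but the latter is more spread out), combined with convexity of $\phi$. The cleanest route is to note that the pair $(0,a)$ majorizes $(x,a-x)$, and since $\phi$ is convex, the sum $\phi(u)+\phi(v)$ over a pair with fixed total $u+v=a$ is Schur-convex, hence maximized at the extreme configuration $(0,a)$. Alternatively, I can verify it directly: define $h(x)=\phi(0)+\phi(a)-\phi(x)-\phi(a-x)$ on $[0,a]$, observe $h(0)=h(a)=0$, and show $h\ge 0$ in between. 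Since $h''(x)=-\phi''(x)-\phi''(a-x)$ and $\phi''(s)=2/(s+b)^3>0$, we get $h''(x)<0$, so $h$ is concave; a concave function vanishing at both endpoints of an interval is nonnegative throughout, which finishes the argument.

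**Main obstacle.** There is no deep obstacle here; the only care needed is the algebraic bookkeeping that correctly isolates the $\phi(x),\phi(a-x)$ terms and confirms the $\phi(0),\phi(a)$ coefficients match up so that they cancel rather than obstruct. The slightly delicate conceptual point—why a probabilistic statement about three Bernoulli variables reduces to a two-point convexity inequality—is precisely the ``coupling'' referenced in the lemma's name: one couples the random argument $r_1x+r_2(a-x)$ to $r_3 a$ so that convexity can be applied termwise. Once that reduction is in place, concavity of $h$ (or Schur-convexity) closes it immediately.
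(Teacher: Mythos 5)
Your proof is correct and follows essentially the same route as the paper's: enumerate the four outcomes of $(r_1,r_2)$, observe that the $\phi(0)$ and $\phi(a)$ coefficients cancel against the right-hand side, and reduce to $\phi(x)+\phi(a-x)\le\phi(0)+\phi(a)$, which the paper also establishes by noting that the convex function $x\mapsto \frac{1}{x+b}+\frac{1}{a-x+b}$ attains its maximum on $[0,a]$ at an endpoint. Your concavity-of-$h$ (or Schur-convexity) phrasing of that last step is the same fact in different clothing.
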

In the next corollary, we derive these specific convergence rates
for strongly convex problems. 
\begin{cor}
\label{cor:conv-rate-strongly-convex}In algorithm \ref{alg:sbda-u}:
if $\omega\left(x\right)$ is $\lambda$-strongly convex with modulus
$\lambda>0$, then \end{cor}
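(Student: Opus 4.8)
The plan is to specialize the master inequality~\eqref{main-converg-composite} of Theorem~\ref{thm-composite-conv} to the strongly convex regime $\lambda>0$, exploiting the accumulating strong-convexity weight $l_{t-1}^{(i)}\lambda$ that now appears in each denominator. I would handle the two announced schemes in turn. For the \emph{simple} scheme I set $\alpha_t\equiv 1$, so that the left-hand weights collapse to the constant $\alpha_{t-1}-\frac{n-1}{n}\alpha_t=\frac1n$ and the output $\bar x=\frac1T\sum_{t=1}^T x_t$ is an ordinary average; convexity of $\phi$ (Jensen) then reduces everything to bounding the right-hand side of~\eqref{main-converg-composite}, and yields the harmless factor $n/T$ relating $\mathbf{E}[\phi(\bar x)-\phi(x^*)]$ to the weighted sum. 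Under $\alpha_t\equiv 1$ the counter $l_{t-1}^{(i)}=\sum_{s=0}^{t-1}\mathbf{1}[i_s=i]$ is $\mathrm{Binomial}(t,1/n)$, and the whole difficulty concentrates in the error sum
\[
\sum_{t=0}^{T-1}\mathbf{E}\!\left[\frac{1}{\gamma_{t-1}^{(i)}\rho+l_{t-1}^{(i)}\lambda}\right].
\]

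The main obstacle is precisely this expectation. Since $x\mapsto 1/x$ is convex, Jensen only supplies the \emph{lower} bound $\mathbf{E}[1/X]\ge 1/\mathbf{E}[X]$, whereas an \emph{upper} bound on the expected reciprocal of the random accumulated weight is what is needed. This is the role I would assign to the coupling inequality of Lemma~\ref{lem:coupling}: writing $l_{t-1}^{(i)}$ as a sum of independent Bernoulli increments with parameter $p=1/n$, the lemma shows that replacing two spread-out increments by a single consolidated one can only \emph{increase} the expected reciprocal, which is exactly the direction required for an upper bound. Applying this comparison, together with the recursive-sum estimate of Lemma~\ref{lemma-recursive-sum} to carry out the telescoping, I expect to control $\mathbf{E}[1/(\gamma_{t-1}^{(i)}\rho+l_{t-1}^{(i)}\lambda)]$ by a deterministic surrogate of order $n/(t\lambda)$; summing the surrogate produces a harmonic sum $\sum_t \frac{n}{t\lambda}=O\!\bigl(\frac{n\log T}{\lambda}\bigr)$, and after multiplying by the averaging factor $n/T$ the simple scheme should deliver the rate $O\!\bigl(\frac{n(\log T)\sum_i M_i^2}{\lambda T}\bigr)$.

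For the second, \emph{aggressive} scheme I would instead let $\{\alpha_t\}$ grow with $t$, so that the effective weights $\alpha_{t-1}-\frac{n-1}{n}\alpha_t$ increase and the averaged iterate concentrates on the later, higher-quality points; crucially, the growth of $\alpha_t$ also inflates the accumulated term $l_{t-1}^{(i)}\lambda=\lambda\sum_s\alpha_s\mathbf{1}[i_s=i]$, so that the same coupling argument now bounds the error sum by $O(n/\lambda)$ \emph{without} the logarithmic factor, giving the optimal strongly convex rate $O\!\bigl(\frac{n\sum_i M_i^2}{\lambda T}\bigr)$ recorded in Table~\ref{tab:complexity}. The only delicate bookkeeping is to align the randomized stepsize recursion~\eqref{eq:assumption-gamma} with the increment rule $l_t^{(i_t)}=l_{t-1}^{(i_t)}+\alpha_t$ so that Lemma~\ref{lem:coupling} applies increment-by-increment; once that is arranged, both rates follow from routine summation and a final use of Jensen to pass from $\frac1T\sum_t\phi(x_t)$ to $\phi(\bar x)$.
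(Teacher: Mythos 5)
Your plan is correct in its overall architecture and, for the aggressive scheme, coincides with the paper's actual argument: the paper also reduces everything to bounding $\mathbf{E}\bigl[1/(\gamma_{t-1}^{(i)}\rho+l_{t-1}^{(i)}\lambda)\bigr]$ and, for $\alpha_t=n+t$, invokes Lemma~\ref{lem:coupling} exactly as you describe --- though the application is \emph{pairwise} rather than ``increment-by-increment'': the $s$-th and $(t-s)$-th increments are matched so that their sizes sum to the constant $2n+t$, which is what lets the weighted Bernoulli sum be dominated by $(2n+t)$ times a Binomial with $\lceil t/2\rceil$ trials. For the simple scheme the paper takes a more direct route than you propose: since all increments equal $1$, $l_{t-1}^{(i)}$ is exactly $\mathrm{Binomial}(t,1/n)$ and the paper evaluates $\mathbf{E}[1/(l_{t-1}^{(i)}+1)]=\frac{n}{t+1}\bigl[1-(\tfrac{n-1}{n})^{t+1}\bigr]\le\frac{n}{t+1}$ in closed form; your detour through Lemma~\ref{lem:coupling} would also work here but only loses a constant, and is unnecessary. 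Two loose points worth flagging: (i) Lemma~\ref{lemma-recursive-sum} plays no role in the strongly convex case --- it governs the recursion for $\mathbf{E}[1/\gamma_{t-1}^{(i)}]$ when $\gamma$ is randomized, whereas here $\gamma_t^{(i)}$ is the deterministic constant $\lambda/\rho$ (resp.\ $\lambda(2n+T)/\rho$), whose sole job is to keep the denominator bounded away from zero before block $i$ is first sampled; and (ii) the error sum $\sum_{t}\mathbf{E}[\alpha_t^2/(\cdot)]$ in the aggressive scheme is $O(n(T+n\log T)/\lambda)$, not $O(n/\lambda)$ --- the optimal rate emerges only after dividing by the total weight $\sum_{t=1}^{T}(\alpha_{t-1}-\tfrac{n-1}{n}\alpha_t)=\tfrac{T(T+1)}{2n}$, which your final answer implicitly gets right.
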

\begin{enumerate}
\item if $\alpha_{t}\equiv1$, $\gamma_{t}^{\left(i\right)}=\lambda/\rho$,
for $t=0,1,2,...,T-1$, and $\bar{x}=\sum_{t=1}^{T}x_{t}/T$, then
\begin{eqnarray*}
\mathbf{E}\left[\phi\left(\bar{x}\right)-\phi\left(x^{*}\right)\right] & \le & \frac{\left(n-1\right)\left[\phi\left(x_{0}\right)-\phi\left(x^{*}\right)\right]+n\lambda/\rho\sum_{i=1}^{n}d_{i}\left(x^{*}\right)}{T}\\
 &  & +\frac{5n\left(\sum_{i=1}^{n}M_{i}^{2}\right)\log\left(T+1\right)}{\lambda T}.
\end{eqnarray*}

\item if $\alpha_{t}=n+t$, for $t=0,1,2,\ldots$, and $\alpha_{-1}=0$,
$\gamma_{t}^{\left(i\right)}=\lambda\left(2n+T\right)/\rho$, for
$t=0,1,2,...,T-1$, %
then 
\begin{eqnarray*}
\mathbf{E}\left[\phi\left(\bar{x}\right)-\phi\left(x^{*}\right)\right] & \le & \frac{2n\left(n-1\right)\left[\phi\left(x_{0}\right)-\phi\left(x^{*}\right)\right]+2n\left(2n+T\right)\lambda/\rho\sum_{i}^{n}d_{i}\left(x^{*}\right)}{T\left(T+1\right)}\\
 &  & +\frac{10n\left(\sum_{i=1}^{n}M_{i}^{2}\right)}{\lambda\left(T+1\right)}\left[1+\frac{n+\left(n+1\right)\log T}{T}\right].
\end{eqnarray*}
\end{enumerate}
\begin{proof}
In part 1), let $\alpha_{t}\equiv1$, $\gamma_{t}^{\left(i\right)}\equiv\lambda/\rho$,
 it can be observed that $l_{t-1}^{\left(i\right)}\sim\text{Binomial}\left(t,\frac{1}{n},\frac{n-1}{n}\right)$
 $t\ge0$, we have
\begin{eqnarray*}
\mathbf{E}\left[\frac{1}{l_{t-1}^{\left(i\right)}\lambda+\gamma_{t-1}^{\left(i\right)}\rho}\right] & = & \sum_{i=0}^{t}\binom{t}{i}\left(\frac{1}{n}\right)^{i}\left(\frac{n-1}{n}\right)^{t-i}\frac{1}{\lambda\left(i+1\right)}\\
 & = & \frac{n}{\lambda\left(t+1\right)}\sum_{i=0}^{t}\binom{t+1}{i+1}\left(\frac{1}{n}\right)^{i+1}\left(\frac{n-1}{n}\right)^{t-i}\\
 & = & \frac{n}{\lambda\left(t+1\right)}\left[1-\left(\frac{n-1}{n}\right)^{t+1}\right]\\
 & \le & \frac{n}{\lambda\left(t+1\right)}.
\end{eqnarray*}
Observing the fact that $\sum_{\tau=0}^{t}\frac{1}{\tau+1}\le\int_{1}^{t+2}\frac{1}{x}dx\le\log\left(t+2\right),$
we obtain 
\begin{eqnarray*}
\mathbf{E}\left[\phi\left(\bar{x}\right)-\phi\left(x^{*}\right)\right] & \le & \frac{\left(n-1\right)\left[\phi\left(x_{0}\right)-\phi\left(x^{*}\right)\right]+\lambda n/\rho\sum_{i=1}^{n}d_{i}\left(x^{*}\right)}{T}+\frac{5n\left(\sum_{i=1}^{n}M_{i}^{2}\right)\log\left(T+1\right)}{\lambda T}.
\end{eqnarray*}

In part 2), let $\alpha_{t}=n+t$, for $t=0,1,2,\ldots$, and $\alpha_{-1}=0$,
$\gamma_{t}^{\left(i\right)}=\lambda\left(2n+T\right)/\rho$, then
for $t\ge0$, for any fixed $i$, let $r_{s}=\mathbf{1}_{i_{s}=i}$,
hence $r_{s}\sim\text{Bernoulli}\left(p\right)$. In addition, we
assume a sequence of ghost i.i.d. samples $\left\{ r'_{s}\right\} _{0\le s\le T}$.
For $t>0$, 
\begin{eqnarray}
\mathbf{E}\left[\frac{1}{l_{t}^{\left(i\right)}\lambda+\gamma_{t}^{\left(i\right)}\rho}\right] & = & \mathbf{E}\left[\frac{1}{\lambda\sum_{s=0}^{t}r_{s}\left(n+s\right)+\gamma_{t}^{\left(i\right)}\rho}\right]\nonumber \\
 & \le & \mathbf{E}\left[\frac{1}{\lambda\sum_{s=0}^{\lceil t/2\rceil-1}\left\{ r_{s}\left(n+s\right)+r_{t-s}\left(n+t-s\right)\right\} +\lambda\left(2n+T\right)}\right]\nonumber \\
 & \le & \mathbf{E}\left[\frac{1}{\lambda\left(2n+t\right)\left(\sum_{s=0}^{\lceil t/2\rceil-1}r'_{s}+1\right)}\right]\nonumber \\
 & \le & \frac{n}{\lambda\left(2n+t\right)\left(\max\left\{ \lceil t/2\rceil,1\right\} \right)}\label{ineq:bound-fractional-strong2}
\end{eqnarray}
where the second inequality follows from the independence of $\left\{ r_{s}\right\} $
and $\left\{ r'_{s}\right\} $ and the coupling property in Lemma~\ref{lem:coupling}.
It can be seen that the conclusion in (\ref{ineq:bound-fractional-strong2})
holds when $t=-1,0$ as well. Hence

\begin{eqnarray*}
\sum_{t=0}^{T-1}\mathbf{E}\left[\frac{\alpha_{t}^{2}}{\gamma_{t-1}^{\left(i\right)}\rho+l_{t-1}^{\left(i\right)}\lambda}\right] & \le & \sum_{t=0}^{T-1}\frac{\left(n+t\right)^{2}}{\lambda\left(2n+t-1\right)\left(\max\left\{ \lceil\left(t-1\right)/2\rceil,1\right\} \right)}\\
 & \le & \sum_{t=0}^{T-1}\frac{\left(n+t\right)}{\lambda\left(\max\left\{ \lceil\left(t-1\right)/2\rceil,1\right\} \right)}\\
 & = & \frac{2n+1}{\lambda}+\sum_{t=2}^{T-1}\frac{\left(n+t\right)}{\lambda\lceil\left(t-1\right)/2\rceil}\\
 & \le & \frac{2n+1}{\lambda}+\sum_{t=2}^{T-1}\frac{2\left(n+t\right)}{\lambda\left(t-1\right)}\\
 & = & \frac{2n+2T-1}{\lambda}+\sum_{t=2}^{T-1}\frac{2\left(n+1\right)}{\lambda\left(t-1\right)}\\
 & \le & \frac{2n+2T}{\lambda}+\frac{2\left(n+1\right)}{\lambda}\int_{1}^{T-1}\frac{1}{x}dx\\
 & \le & \frac{2}{\lambda}\left(n+T+\left(n+1\right)\log T\right).
\end{eqnarray*}
 Let $\bar{x}=\frac{\sum_{t=1}^{T}tx_{t}}{\sum_{t=1}^{T}t}$ be the
weighted average point, then 
\begin{eqnarray*}
\mathbf{E}\left[\phi\left(\bar{x}\right)-\phi\left(x^{*}\right)\right] & \le & \frac{2n\left(n-1\right)\left[\phi\left(x_{0}\right)-\phi\left(x^{*}\right)\right]+2n\left(2n+T\right)\lambda/\rho\sum_{i}^{n}D_{i}}{T\left(T+1\right)}\\
 &  & +\frac{10n\left(\sum_{i=1}^{n}M_{i}^{2}\right)}{\lambda\left(T+1\right)}\left[1+\frac{n+\left(n+1\right)\log T}{T}\right].
\end{eqnarray*}

\end{proof}
For nonsmooth and strongly convex objectives, we presented two options
to select $\left\{ \alpha_{t}\right\} $ and $\left\{ \gamma_{t}\right\} $.
These results seem to provide new insights on the dual averaging approach
as well. To see this, we consider SBDA-u when $n=1$. In the first
scheme, when $\alpha_{t}\equiv1$, the convergence rate of $O\left(\log T/T\right)$
is similar to the one in \cite{Xiao:2010:DAM}. In the second scheme
of Corollary \ref{cor:conv-rate-strongly-convex}, it shows that regularized
dual averaging methods can be easily improved to be optimal while
being equipped with a more aggressive averaging scheme. Our observation
suggests an alternative with rate $O\left(1/T\right)$ to the more
complicated accelerated scheme (\cite{ghadimi2012optimal,chen2012optimal}).
Such results seems new to the world of simple averaging methods, and
is on par with the recent discoveries for stochastic mirror descent
methods (\cite{nedic2014stochastic,Dang:arXiv1309.2249,hazan2014beyond,rakhlin2012making,lacoste2012simpler}).

\section{Nonuniformly randomized SBDA (SBDA-r)}

In this section we consider the general nonsmooth convex problem when
$\omega\left(x\right)=0$ or $\omega\left(x\right)$ is lumped into
$f\left(\cdot\right)$: 
\[
\min_{x\in X}\phi\left(x\right)=f\left(x\right),
\]
and show a variant of SBDA in which block coordinates are sampled
non-uniformly. More specifically, we assume the block coordinates
are i.i.d. sampled from a discrete distribution $\left\{ p_{i}\right\} _{1\le i\le n}$,
$0<p_{i}<1$, $1\le i\le n$. We describe in Algorithm~\ref{alg:sbda-r}
the nonuniformly randomized stochastic block dual averaging method
(SBDA-r). 

\begin{algorithm}
\KwIn{ convex function $f$, sequence of samples $\left\{ \xi_{t}\right\} $,
distribution $\left\{ p_{i}\right\} _{1\le i\le n}$;}

initialize $\alpha_{0}\in\mathbb{R}$, $\gamma_{-1}\in\mathbb{R}^{n}$,$\bar{G}=\mathbf{0}^{N}$
and $x_{0}=\arg\min_{x\in X}\sum_{i=1}^{n}\frac{\gamma_{-1}^{\left(i\right)}}{p_{i}}d_{i}\left(x^{\left(i\right)}\right)$\;

\For{ $t=0,1,\ldots,T-1$}{

sample a block $i_{t}\in\left\{ 1,2,\ldots,n\right\} $ with probability
$\text{Prob}\left(i_{t}=i\right)=p_{i}$\;

set $\gamma_{t}^{\left(i\right)}$, $i=1,2,...,n$\;

receive sample $\xi_{t}$ and update $\bar{G}$: $\bar{G}=\bar{G}+\frac{\alpha_{t}}{p_{i_{t}}}U_{i}G^{\left(i_{t}\right)}\left(x_{t},\xi_{t}\right)$\;

update $x_{t+1}^{\left(i_{t}\right)}=\arg\min_{x\in X_{i_{t}}}\left\{ \left\langle \bar{G}^{\left(i_{t}\right)},x\right\rangle +\frac{\gamma_{t}^{\left(i_{t}\right)}}{p_{i_{t}}}d_{i_{t}}\left(x\right)\right\} $\;

set $x_{t+1}^{\left(j\right)}=x_{t}^{\left(j\right)}$, $j\neq i_{t}$\;
} \KwOut{$\bar{x}=\left(\sum_{t=0}^{T}\alpha_{t}x_{t}\right)/\left(\sum_{t=0}^{T}\alpha_{t}\right)$;}

\protect\caption{Nonuniformly randomized stochastic block dual averaging (SBDA-r) method
\label{alg:sbda-r}}
\end{algorithm}

In the next theorem, we present the main convergence property of SBDA-r,
which expresses the bound of the expected optimization error as a
joint function of the sampling distribution $\{p_{i}\}$, and the
sequences $\{\alpha_{t}\}$, $\{\gamma_{t}\}$.
\begin{thm}
\label{thm-nonsmooth-conv} In algorithm \ref{alg:sbda-r}, let $\left\{ x_{t}\right\} $
be the generated solutions and $x^{*}$ be the optimal solution, $\left\{ \alpha_{t}\right\} $
be a sequence of positive numbers, $\left\{ \gamma_{t}\right\} $
be a sequence of vectors satisfying the assumption (\ref{eq:assumption-gamma})
. Let $\bar{x}=\frac{\sum_{t=0}^{T}\alpha_{t}x_{t}}{\sum_{t=0}^{T}\alpha_{t}}$
be the average point, then

\begin{equation}
\mathbf{E}\left[f\left(\bar{x}\right)-f\left(x\right)\right]\le\frac{1}{\sum_{t=0}^{T}\alpha_{t}}\left\{ \sum_{t=0}^{T}\sum_{i=1}^{n}\mathbf{E}\left[\frac{\alpha_{t}^{2}\|G_{t}\|_{\left(i\right),*}^{2}}{2\rho\gamma_{t-1}^{\left(i\right)}}\right]+\sum_{i=1}^{n}\frac{\mathbf{E}\left[\gamma_{T}^{\left(i\right)}\right]}{p_{i}}d_{i}\left(x\right)\right\} .\label{eq:convergence-convex-nonsmooth}
\end{equation}
\end{thm}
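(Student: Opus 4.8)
The plan is to adapt the classical dual-averaging ``value-function'' telescoping to the importance-weighted block updates of Algorithm~\ref{alg:sbda-r}. I would first introduce the cumulative model
\[
\Psi_{t}(x)=\sum_{s=0}^{t}\frac{\alpha_{s}}{p_{i_{s}}}\langle G_{s},x\rangle_{(i_{s})}+\sum_{i=1}^{n}\frac{\gamma_{t}^{(i)}}{p_{i}}d_{i}(x^{(i)}),\qquad t\ge0,
\]
together with $\Psi_{-1}(x)=\sum_{i=1}^{n}\frac{\gamma_{-1}^{(i)}}{p_{i}}d_{i}(x^{(i)})$. Because $\Psi_{t}$ is block separable and, by assumption (\ref{eq:assumption-gamma}), $\gamma_{t}^{(j)}=\gamma_{t-1}^{(j)}$ for $j\ne i_{t}$, the global minimizer of $\Psi_{t}$ coincides with that of $\Psi_{t-1}$ outside block $i_{t}$; hence the single-block step in the algorithm in fact produces $x_{t+1}=\arg\min_{x\in X}\Psi_{t}(x)$, and likewise $x_{0}=\arg\min_{x}\Psi_{-1}(x)$.

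The technical heart is a one-step lower bound on the optimal values $\Psi_{t}(x_{t+1})$. Writing $\Psi_{t}=\Psi_{t-1}+\frac{\alpha_{t}}{p_{i_{t}}}\langle G_{t},\cdot\rangle_{(i_{t})}+\frac{\gamma_{t}^{(i_{t})}-\gamma_{t-1}^{(i_{t})}}{p_{i_{t}}}d_{i_{t}}$, the monotonicity $\gamma_{t}^{(i_{t})}\ge\gamma_{t-1}^{(i_{t})}$ in (\ref{eq:assumption-gamma}) makes the last summand nonnegative, so it can be discarded when minimizing. Applying Lemma~\ref{lem:prox-ineq-1} to $\Psi_{t-1}$ (whose proximal part $\sum_{i}\frac{\gamma_{t-1}^{(i)}}{p_{i}}d_{i}$ is strongly convex, with Bregman lower bound $\mathcal{V}_{i}\ge\frac{\rho}{2}\|\cdot\|_{(i)}^{2}$) gives, on block $i_{t}$, $\Psi_{t-1}(x)\ge\Psi_{t-1}(x_{t})+\frac{\gamma_{t-1}^{(i_{t})}\rho}{2p_{i_{t}}}\|x^{(i_{t})}-x_{t}^{(i_{t})}\|_{(i_{t})}^{2}$; completing the square in that block then yields
\[
\Psi_{t}(x_{t+1})\ge\Psi_{t-1}(x_{t})+\frac{\alpha_{t}}{p_{i_{t}}}\langle G_{t},x_{t}\rangle_{(i_{t})}-\frac{\alpha_{t}^{2}}{2\rho\,\gamma_{t-1}^{(i_{t})}p_{i_{t}}}\|G_{t}\|_{(i_{t}),*}^{2}.
\]
(The same estimate can be read off from Lemma~\ref{lem:prox-ineq-2} once the increment is dropped.) Telescoping over $t=0,\dots,T$, using $\Psi_{-1}(x_{0})\ge0$ and $\Psi_{T}(x_{T+1})=\min_{x}\Psi_{T}(x)\le\Psi_{T}(x)$ for an arbitrary fixed $x$, and rearranging produces the pathwise regret bound
\[
\sum_{t=0}^{T}\frac{\alpha_{t}}{p_{i_{t}}}\langle G_{t},x_{t}-x\rangle_{(i_{t})}\le\sum_{i=1}^{n}\frac{\gamma_{T}^{(i)}}{p_{i}}d_{i}(x^{(i)})+\sum_{t=0}^{T}\frac{\alpha_{t}^{2}}{2\rho\,\gamma_{t-1}^{(i_{t})}p_{i_{t}}}\|G_{t}\|_{(i_{t}),*}^{2}.
\]

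Finally I would take expectations. Conditioning on the history up to $x_{t}$ and using the independence of the draws $i_{t}$ and $\xi_{t}$, the importance weight exactly debiases the block sampling, $\mathbf{E}_{i_{t}}[\frac{1}{p_{i_{t}}}\langle G_{t},x_{t}-x\rangle_{(i_{t})}]=\langle G_{t},x_{t}-x\rangle$, after which $\mathbf{E}_{\xi_{t}}[G_{t}]=g_{t}\in\partial f(x_{t})$ and convexity give $\mathbf{E}[\frac{\alpha_{t}}{p_{i_{t}}}\langle G_{t},x_{t}-x\rangle_{(i_{t})}]\ge\alpha_{t}\mathbf{E}[f(x_{t})-f(x)]$. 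The same debiasing (legitimate since $\gamma_{t-1}$ is determined before $i_{t}$ is drawn) turns the error term into $\sum_{i}\frac{\alpha_{t}^{2}}{2\rho\gamma_{t-1}^{(i)}}\|G_{t}\|_{(i),*}^{2}$ inside the expectation, while $\gamma_{T}$ stays under the expectation since it is random. Dividing by $\sum_{t=0}^{T}\alpha_{t}$ and invoking convexity (Jensen) for $\bar{x}=\sum_{t}\alpha_{t}x_{t}/\sum_{t}\alpha_{t}$ then delivers (\ref{eq:convergence-convex-nonsmooth}). I expect the main obstacle to be securing the clean telescope: this rests on two observations that must be argued carefully, namely that the block update genuinely equals the global minimizer $\arg\min_{x}\Psi_{t}(x)$, and that the nonnegative proximal increment created by $\gamma_{t}^{(i_{t})}\ge\gamma_{t-1}^{(i_{t})}$ may be dropped without affecting the telescoped bound; the remaining subtlety is the measurability bookkeeping where the weights $1/p_{i_{t}}$ must debias the block sampling exactly and $\gamma_{t-1}$ must be independent of the current index $i_{t}$, whereas the completing-the-square constant is routine.
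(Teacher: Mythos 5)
Your proposal is correct and follows essentially the same route as the paper's proof: the same importance-weighted auxiliary function $\Psi_{t}$, the same use of the monotonicity of $\gamma_{t}^{(i_{t})}$ to drop the proximal increment, the same one-step estimate (the paper invokes Lemma~\ref{lem:prox-ineq-2} directly, which you correctly note is equivalent to your Lemma~\ref{lem:prox-ineq-1}-plus-completing-the-square derivation), and the same telescoping and debiasing by $1/p_{i_{t}}$. The only cosmetic difference is that the paper first applies convexity to write $A_{T}[f(\bar{x})-f(x)]\le\sum_{t}\alpha_{t}\langle g_{t},x_{t}-x\rangle$ and then splits off a zero-mean term $\Delta_{2}$, whereas you debias first and apply convexity afterward; these are the same argument.
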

\begin{proof}
For the sake of simplicity, let us denote $A_{t}=\sum_{\tau=0}^{t}\alpha_{\tau}$,
for $t=0,1,2\ldots$. Based on the convexity of $f$, we have $f\left(\frac{\sum_{t=0}^{T}\alpha_{t}x_{t}}{A_{T}}\right)\le\frac{\sum_{t=0}^{T}\alpha_{t}f\left(x_{t}\right)}{A_{T}}$
and $f\left(x_{t}\right)\le f\left(x\right)+\left\langle g_{t},x_{t}-x\right\rangle $
for $x\in X$. Then

\begin{align}
A_{T}\left[f\left(\bar{x}\right)-f\left(x\right)\right] & \le\sum_{t=0}^{T}\alpha_{t}\left\langle g_{t},x_{t}-x\right\rangle \nonumber \\
 & \le\underset{\Delta_{1}}{\underbrace{\sum_{t=0}^{T}\frac{\alpha_{t}}{p_{i_{t}}}\left\langle U_{i_{t}}G_{t}^{\left(i_{t}\right)},x_{t}-x\right\rangle }}+\underset{\Delta_{2}}{\underbrace{\sum_{t=0}^{T}\alpha_{t}\left\langle g_{t}-\frac{1}{p_{i_{t}}}U_{i_{t}}G_{t}^{\left(i_{t}\right)},x_{t}-x\right\rangle }}.\label{eq:recursive-0}
\end{align}
It suffices to  provide precise bounds on the expectation of $\Delta_{1}$,
$\Delta_{2}$ separately. 

We define the auxiliary function 
\[
\Psi_{t}\left(x\right)=\begin{cases}
\sum_{s=0}^{t}\frac{\alpha_{s}}{p_{i_{s}}}\left\langle U_{i_{s}}G_{s}^{\left(i_{s}\right)},x\right\rangle +\sum_{i=1}^{n}\frac{\gamma_{t}^{\left(i\right)}}{p_{i}}d_{i}\left(x^{\left(i\right)}\right) & t\ge0\\
\sum_{i=1}^{n}\frac{\gamma_{t}^{\left(i\right)}}{p_{i}}d_{i}\left(x^{\left(i\right)}\right) & t=-1
\end{cases}.
\]
 Thus 
\begin{eqnarray}
\Psi_{t}\left(x_{t+1}\right) & = & \min_{x}\Psi_{t}\left(x\right)\nonumber \\
 & \ge & \min_{x}\left\{ \sum_{s=0}^{t}\frac{\alpha_{s}}{p_{i_{s}}}\left\langle U_{i_{t}}G_{s}^{\left(i_{t}\right)},x\right\rangle +\sum_{i=1}^{n}\frac{\gamma_{t-1}^{\left(i\right)}}{p_{i}}d_{i}\left(x^{\left(i\right)}\right)\right\} \nonumber \\
 & = & \min_{x}\left\{ \frac{\alpha_{t}}{p_{i_{t}}}\left\langle U_{i_{t}}G_{t}^{\left(i_{t}\right)},x\right\rangle +\Psi_{t-1}\left(x\right)\right\} \label{eq:phi_t}
\end{eqnarray}
The first inequality follows from the property (\ref{eq:assumption-gamma}).
Next, using  (\ref{eq:phi_t}) and Lemma~\ref{lem:prox-ineq-2},
we obtain 
\[
\frac{\alpha_{t}}{p_{i_{t}}}\left\langle U_{i_{t}}G_{t}^{\left(i_{t}\right)},x_{t}\right\rangle \le\Psi_{t}\left(x_{t+1}\right)-\Psi_{t-1}\left(x_{t}\right)+\frac{\alpha_{t}^{2}}{2\rho p_{i_{t}}\gamma_{t-1}^{\left(i_{t}\right)}}\|G_{t}\|_{\left(i_{t}\right),*}^{2}.
\]
Summing up the above inequality for $t=0,\ldots,T$, we have

\begin{align}
\sum_{t=0}^{T}\frac{\alpha_{t}}{p_{i_{t}}}\left\langle U_{i_{t}}G_{t}^{\left(i_{t}\right)},x_{t}\right\rangle  & \le\Psi_{T}\left(x_{T+1}\right)-\Psi_{-1}\left(x_{0}\right)+\sum_{t=0}^{T}\frac{\alpha_{t}^{2}}{2\rho p_{i_{t}}\gamma_{t-1}^{\left(i_{t}\right)}}\|G_{t}\|_{\left(i_{t}\right),*}^{2}.\label{eq:delta1-bound1}
\end{align}
Moreover, by the optimality of $x_{T+1}$ in solving $\min_{x}\Psi_{T}\left(x\right)$,
for all $x\in X$, we have 
\begin{equation}
\Psi_{T}\left(x_{T+1}\right)\le\sum_{t=0}^{T}\frac{\alpha_{t}}{p_{i_{t}}}\left\langle U_{i_{t}}G_{t}^{\left(i_{t}\right)},x\right\rangle +\sum_{i=1}^{n}\frac{\gamma_{T}^{\left(i\right)}}{p_{i}}d_{i}\left(x\right).\label{eq:delta1-bound2}
\end{equation}
Putting (\ref{eq:delta1-bound1}) and (\ref{eq:delta1-bound2}) together,
and using the fact that $\Psi_{-1}\left(x_{0}\right)\ge0$, we obtain:

\[
\Delta_{1}\le\sum_{i=1}^{n}\frac{\gamma_{T}^{\left(i\right)}}{p_{i}}d_{i}\left(x\right)+\sum_{t=0}^{T}\frac{\alpha_{t}^{2}}{2\rho p_{i_{t}}\gamma_{t-1}^{\left(i_{t}\right)}}\|G_{t}\|_{\left(i_{t}\right),*}^{2}.
\]
For each $t$, taking expectation w.r.t. $i_{t}$, we have

\begin{eqnarray*}
\mathbf{E}\left[\frac{\alpha_{t}^{2}}{2\rho p_{i_{t}}\gamma_{t-1}^{\left(i_{t}\right)}}\|G_{t}\|_{\left(i_{t}\right),*}^{2}\right] & = & \mathbf{E}\left[\mathbf{E}_{i_{t}}\left[\frac{\alpha_{t}^{2}}{2\rho p_{i_{t}}\gamma_{t-1}^{\left(i_{t}\right)}}\|G_{t}\|_{\left(i_{t}\right),*}^{2}\right]\right]\\
 & = & \sum_{i=1}^{n}\mathbf{E}\left[\frac{\alpha_{t}^{2}}{2\rho\gamma_{t-1}^{\left(i\right)}}\|G_{t}\|_{\left(i\right),*}^{2}\right].
\end{eqnarray*}
As a consequence, one has 
\begin{align}
\mathbf{E}\left[\Delta_{1}\right] & \le\sum_{i=1}^{n}\frac{\mathbf{E}\left[\gamma_{T}^{\left(i\right)}\right]}{p_{i}}d_{i}\left(x\right)+\sum_{t=0}^{T}\sum_{i=1}^{n}\mathbf{E}\left[\frac{\alpha_{t}^{2}\|G_{t}\|_{\left(i\right),*}^{2}}{2\rho\gamma_{t-1}^{\left(i\right)}}\right].\label{eq:expect-Delta1}
\end{align}
In addition, taking the expectation with respect to $i_{t}$, $\xi_{t}$
 and noting that $\mathbf{E}_{\xi_{t},i_{t}}\left[\frac{1}{p_{i_{t}}}U_{i_{t}}G_{t}\right]-g_{t}=\mathbf{E}_{\xi_{t}}\left[G_{t}\right]-g_{t}=0$,
we obtain 
\begin{equation}
\mathbf{E}\left[\Delta_{2}\right]=0.\label{eq:expect-Delta2}
\end{equation}
In view of (\ref{eq:expect-Delta1}) and (\ref{eq:expect-Delta2}),
we obtain the bound on the expected optimization error:

\[
\mathbf{E}\left[f\left(\bar{x}\right)-f\left(x\right)\right]\le\frac{1}{\sum_{t=0}^{T}\alpha_{t}}\left\{ \sum_{t=0}^{T}\sum_{i=1}^{n}\mathbf{E}\left[\frac{\alpha_{t}^{2}\|G_{t}\|_{\left(i\right),*}^{2}}{2\rho\gamma_{t-1}^{\left(i\right)}}\right]+\sum_{i=1}^{n}\frac{\mathbf{E}\left[\gamma_{T}^{\left(i\right)}\right]}{p_{i}}d_{i}\left(x\right)\right\} .
\]

\end{proof}

\subsubsection*{Block Coordinates Sampling and Analysis}

In view of Theorem 4, the obtained upper bound can be conceived as
a joint function of probability mass $\{p_{i}\}$, and the control
sequences $\{\alpha_{t}\}$, $\{\gamma_{t}\}$. Firstly, throughout
this section, let $x=x^{*}$ and  assume 
\begin{equation}
\alpha_{t}=1,\quad t=0,1,2,\ldots.\label{eq:alpha-value}
\end{equation}
 Naturally, we can choose the distribution and stepsizes by optimizing
the bound 
\begin{equation}
\min_{\{\gamma_{t}\},p}\mathcal{L}(\{\gamma_{t}\},p)=\sum_{t=0}^{T}\sum_{i=1}^{n}\mathbf{E}\left[\frac{M_{i}^{2}}{2\rho\gamma_{t-1}^{(i)}}\right]+\sum_{i}^{n}\frac{\mathbf{E}[\gamma_{T}^{(i)}]}{p_{i}}D_{i}.\label{p-gamma-0}
\end{equation}
This is a joint problem on  two groups of variables. Let us first
discuss how to choose $\left\{ \gamma_{t}\right\} $ for any fixed
$p_{i}$. Let us assume $p_{i}$ has the form: $p_{i}=\frac{M_{i}^{a}D_{i}^{b}}{C_{a,b}},\hspace{1em}i=1,2,\ldots,n,$
where $a,b\ge0$, and define $C_{a,b}=\sum_{i=1}^{n}M_{i}^{a}D_{i}^{b}$.
We derive two stepsizes rules, depending on whether the iteration
number $T$ is known or not. We assume $\gamma_{t}^{(i)}=\beta_{i}$,
for some constant $\beta_{i}$,  $i=1,2,\ldots n$, $t=1,2,...,T$.
The equivalent problem with $p$, $\beta$, has the form 
\begin{equation}
\min_{p,\beta}\mathcal{L}(p,\beta)=\sum_{i=1}^{n}\frac{(T+1)M_{i}^{2}}{2\rho\beta_{i}}+\sum_{i}^{n}\frac{\beta_{i}}{p_{i}}D_{i}.\label{p-gamma-1}
\end{equation}
By optimizing w.r.t. $\beta$, we obtain the optimal solutions 
\begin{equation}
\gamma_{t}^{(i)}=\beta_{i}=\sqrt{\frac{(1+T)p_{i}M_{i}^{2}}{2\rho D_{i}}}.\label{gamma-p}
\end{equation}
In addition, we can also select stepsizes without assuming the iteration
number $T$. Let us denote 
\begin{equation}
\gamma_{t}^{(i)}=\left\{ \begin{array}{ll}
\sqrt{t+1}u_{i} & \text{if }i=i_{t},\\
\gamma_{t-1}^{(i)} & \text{otherwise},
\end{array}\right.\label{eq:gamma_1_sbda-r}
\end{equation}
for some unspecified $u_{i}$, $1\le i\le n$. Applying Lemma~\ref{lemma-recursive-sum}
with $a_{t}=E\left[\frac{1}{\gamma_{t-1}^{(i)}}\right]$, $b_{t}=\frac{1}{u_{i}\sqrt{t}}$,
we have 
\[
\sum_{t=0}^{T}\mathbf{E}\left[\frac{1}{\gamma_{t-1}^{(i)}}\right]\le\sum_{t=1}^{T}\frac{1}{u_{i}\sqrt{t}}+\frac{1}{\gamma_{-1}^{(i)}p_{i}}\le2\frac{\sqrt{T+1}}{u_{i}}+\frac{1}{\gamma_{-1}^{(i)}p_{i}}.
\]
In view of the above analysis, we can relax the problem to the following:
\[
\min_{p,u}\sum_{i=1}^{n}\left[\frac{M_{i}^{2}\sqrt{T+1}}{\rho u_{i}}+\frac{u_{i}\sqrt{T+1}}{p_{i}}D_{i}+\frac{M_{i}^{2}}{2\rho\gamma_{-1}^{(i)}p_{i}}\right].
\]
Note that the third term above is $o\left(\sqrt{T}\right)$ and hence
can be ignored for the sake of simplicity.  Thus we have the approximate
problem 
\begin{equation}
\min_{p,u}\sum_{i=1}^{n}\left[\frac{M_{i}^{2}\sqrt{T+1}}{\rho u_{i}}+\frac{u_{i}\sqrt{T+1}}{p_{i}}D_{i}\right],\label{p-gamma-2}
\end{equation}
we apply the similar analysis and obtain $u_{i}=\sqrt{\frac{p_{i}M_{i}^{2}}{\rho D_{i}}}$
and hence the second stepsize rule

\begin{equation}
\gamma_{t}^{(i)}=\left\{ \begin{array}{ll}
\sqrt{\frac{(t+1)p_{i}M_{i}^{2}}{\rho D_{i}}} & \text{if }i=i_{t}\\
\gamma_{t-1}^{(i)} & \text{otherwise}
\end{array}\right.,\hspace{1em}t\ge0.\label{eq:gamma_2_sbda-r}
\end{equation}

We have established the relation between the optimized sampling probability
and stepsizes. Now we are ready to discuss specific choices of the
probability distribution. Firstly, the simplest way is to set 
\begin{equation}
p_{i}=\frac{1}{n},\quad i=1,2,\ldots.,n,\label{eq:uni-sampling}
\end{equation}
which implies that SBDA-r reduces to the uniform sampling method SBDA-u
with the obtained stepsizes entirely similar to the ones we derived
earlier. However, from the above analysis, it is possible to choose
the sampling distribution properly and obtain a further improved convergence
rate. Next we show how to obtain the optimal sampling and stepsize
policies from solving the joint problem (\ref{p-gamma-0}). We first
describe an important property in the following lemma.
\begin{lem}
\label{lemma:joint-problem}Let $\mathcal{S}^{n}$ be the $n$-dimensional
simplex. The optimal solution $x^{\ast}$, $y^{\ast}$ of the nonlinear
problem $\min_{x\in\mathbb{R}_{++}^{n},y\in\mathcal{S}^{n}}\sum_{i=1}^{n}\left[\frac{a_{i}}{x_{i}}+\frac{x_{i}}{b_{i}y_{i}}\right]$
where $a_{i},b_{i}>0$, $1\le i\le n$   is 
\[
y_{i}^{\ast}=(a_{i}/b_{i})^{\frac{1}{3}}W,\,\text{and }x_{i}^{\ast}=a_{i}^{\frac{2}{3}}b_{i}^{\frac{1}{3}}\sqrt{W},
\]
where $i=1,2,\ldots n$ and $W=\left(\sum_{i}^{n}(a_{i}/b_{i})^{\frac{1}{3}}\right)^{-1}$. 
\end{lem}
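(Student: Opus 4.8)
The plan is to exploit the separable, ``perspective-like'' structure of the objective and minimize in two stages, $\min_{y\in\mathcal S^n}\min_{x\in\mathbb R_{++}^n}$, rather than attacking the joint problem directly. First I would fix an admissible $y$ (with all $y_i>0$; any $y$ with some $y_i=0$ forces the term $x_i/(b_iy_i)$ to $+\infty$ for $x_i>0$, so such points are never optimal) and minimize over $x$. Since the objective separates across coordinates and each summand $a_i/x_i+x_i/(b_iy_i)$ is strictly convex in $x_i>0$, the first-order condition (equivalently AM--GM) gives the unique minimizer
\[
x_i(y)=\sqrt{a_ib_iy_i},
\]
with reduced objective $h(y)=2\sum_{i=1}^n\sqrt{a_i/b_i}\;y_i^{-1/2}$.

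Next I would minimize $h$ over the simplex. Writing $c_i=\sqrt{a_i/b_i}$, the task is $\min\{\sum_i c_iy_i^{-1/2}:\ \sum_i y_i=1,\ y_i>0\}$. The cleanest route is H\"older's inequality: using the identity $c_i^{2/3}=(c_iy_i^{-1/2})^{2/3}\,y_i^{1/3}$ and H\"older with exponents $p=\tfrac32$, $q=3$, one obtains
\[
\sum_{i=1}^n c_i^{2/3}\le\Big(\sum_{i=1}^n c_iy_i^{-1/2}\Big)^{2/3}\Big(\sum_{i=1}^n y_i\Big)^{1/3}=\Big(\sum_{i=1}^n c_iy_i^{-1/2}\Big)^{2/3},
\]
hence $\sum_i c_iy_i^{-1/2}\ge(\sum_i c_i^{2/3})^{3/2}$. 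Equality in H\"older forces $c_iy_i^{-1/2}\propto y_i$, i.e.\ $y_i\propto c_i^{2/3}=(a_i/b_i)^{1/3}$; normalizing on the simplex yields $y_i^\ast=(a_i/b_i)^{1/3}W$ with $W=(\sum_j(a_j/b_j)^{1/3})^{-1}$. (One may instead invoke convexity of $h$ on the positive orthant together with the Lagrange-multiplier stationarity condition $-\tfrac12 c_iy_i^{-3/2}+\mu=0$, the blow-up of $h$ at the boundary guaranteeing an interior minimizer; H\"older is preferable because it settles both the lower bound and the equality case in one stroke.)

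Finally I would substitute $y^\ast$ back into $x_i(y)$:
\[
x_i^\ast=\sqrt{a_ib_iy_i^\ast}=\sqrt{a_i^{4/3}b_i^{2/3}W}=a_i^{2/3}b_i^{1/3}\sqrt{W},
\]
which is the claimed form. The only genuine subtlety, and the point I would stress, is that the joint objective is \emph{not} convex in $(x,y)$ --- the term $x/y$ has indefinite Hessian --- so one cannot simply write down joint KKT conditions and declare them sufficient. The two-stage decoupling is exactly what removes this difficulty: the inner problem is a collection of strictly convex one-dimensional problems, and after eliminating $x$ the outer objective $h$ \emph{is} convex on the simplex, so the H\"older (or Lagrange) stationary point is the global optimum.
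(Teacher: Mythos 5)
Your proof is correct, and it takes a recognizably different route from the paper's. The paper argues via necessary conditions at the joint optimum: it fixes $y^{*}$ and uses AM--GM to get $\frac{a_{i}}{x_{i}^{*}}=\frac{x_{i}^{*}}{b_{i}y_{i}^{*}}$, then fixes $x^{*}$ and applies the Cauchy--Schwarz inequality to $\sum_{i}\frac{x_{i}^{*}}{b_{i}y_{i}}$ against the constraint $\sum_{i}y_{i}=1$ to get $\frac{x_{i}^{*}}{b_{i}y_{i}^{*}}=Cy_{i}^{*}$, and finally solves the resulting two-equation system on the simplex. You instead eliminate $x$ entirely (the inner AM--GM step is the same), obtain the value function $h(y)=2\sum_{i}\sqrt{a_{i}/b_{i}}\,y_{i}^{-1/2}$, and minimize it with a H\"older argument with exponents $(3/2,3)$ that delivers the lower bound $2\left(\sum_{i}(a_{i}/b_{i})^{1/3}\right)^{3/2}$ together with its equality case in one step. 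The trade-off favors your version on rigor: since the joint objective is not convex in $(x,y)$ (as you correctly note, $x/y$ has indefinite Hessian), the paper's two partial-optimality conditions are only necessary, and strictly speaking one still owes an argument that a minimizer exists and that the unique solution of the system is in fact the global minimum; your nested decomposition sidesteps this by producing a global lower bound valid for every feasible $(x,y)$ that is attained at the claimed point. The paper's approach is slightly shorter and symmetric in the two blocks, and its Cauchy--Schwarz step avoids the fractional H\"older exponents, but your proof is self-contained and certifies global optimality. Both arrive at the same $y_{i}^{*}\propto(a_{i}/b_{i})^{1/3}$ and $x_{i}^{*}=a_{i}^{2/3}b_{i}^{1/3}\sqrt{W}$.
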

Applying Lemma \lemmaref{joint-problem} to the problem (\ref{p-gamma-1})
, we obtain the optimal sampling probability

\begin{equation}
p_{i}=M_{i}^{\frac{2}{3}}D_{i}^{\frac{1}{3}}/C,\quad i=1,2,\ldots n\label{eq:opt-sampling}
\end{equation}
where $C$ is the  normalizing constant. This is also the optimal
probability in problem (\ref{p-gamma-2}). In view of these results,
we obtain the specific convergence rates in the following corollary:
\begin{cor}
\label{cor:conv-rate-sbda-r}In algorithm \ref{alg:sbda-r}, let $\alpha_{t}=1$,
$t\ge0$. Denote $C=\left(\sum_{j=1}^{n}M_{j}^{2/3}D_{j}^{1/3}\right)$,
with block coordinates sampled from distribution (\ref{eq:opt-sampling}).
Then: \end{cor}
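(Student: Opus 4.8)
The plan is to specialize the master bound of Theorem~\ref{thm-nonsmooth-conv} to the present setting and then substitute the optimal sampling distribution \eqref{eq:opt-sampling} together with each of the two stepsize rules \eqref{gamma-p} and \eqref{eq:gamma_2_sbda-r}. Setting $\alpha_t=1$ so that $\sum_{t=0}^T\alpha_t=T+1$, taking $x=x^*$, and invoking the variance bound $\mathbf{E}_{\xi_t}[\|G_t\|_{(i),*}^2]\le M_i^2$ from \eqref{ass-grad-bound} (legitimate since $\gamma_{t-1}^{(i)}$ is independent of $\xi_t$) together with $d_i(x^*)\le D_i$ from \eqref{eq:assum-D}, the right-hand side of \eqref{eq:convergence-convex-nonsmooth} is dominated by $\tfrac{1}{T+1}$ times the quantity $\mathcal{L}(\{\gamma_t\},p)$ in \eqref{p-gamma-0}. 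Everything then reduces to evaluating $\mathcal{L}$ at the prescribed parameters.

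For the constant rule $\gamma_t^{(i)}=\beta_i=\sqrt{(1+T)p_iM_i^2/(2\rho D_i)}$, both sums in $\mathcal{L}$ are computed in closed form: the gradient term contributes $(T+1)\sum_i M_i^2/(2\rho\beta_i)$ and the proximal term contributes $\sum_i \beta_iD_i/p_i$. The key algebraic observation is that, after inserting $\beta_i$, each of these equals $\tfrac{1}{\sqrt{2\rho}}\sqrt{T+1}\sum_i M_i\sqrt{D_i}/\sqrt{p_i}$; this balance is exactly what the optimization in Lemma~\ref{lemma:joint-problem} enforces. Substituting the optimal $p_i=M_i^{2/3}D_i^{1/3}/C$ then collapses the weighted sum, since $M_i\sqrt{D_i}/\sqrt{p_i}=\sqrt{C}\,M_i^{2/3}D_i^{1/3}$, so that $\sum_i M_i\sqrt{D_i}/\sqrt{p_i}=\sqrt{C}\cdot C=C^{3/2}$. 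Dividing the resulting total by $T+1$ yields a bound of order $C^{3/2}/\sqrt{\rho(T+1)}$, i.e.\ $O\big(C^{3/2}/\sqrt{T}\big)$.

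For the adaptive, $T$-agnostic rule \eqref{eq:gamma_2_sbda-r} the stepsizes are random, so I first control the gradient term by the recursion argument already used for SBDA-u: apply Lemma~\ref{lemma-recursive-sum} with $a_t=\mathbf{E}[1/\gamma_{t-1}^{(i)}]$ and $b_t=1/(u_i\sqrt{t})$, where $u_i=\sqrt{p_iM_i^2/(\rho D_i)}$, to get $\sum_{t=0}^T\mathbf{E}[1/\gamma_{t-1}^{(i)}]\le 2\sqrt{T+1}/u_i+1/(\gamma_{-1}^{(i)}p_i)$. For the proximal term I use monotonicity of $\{\gamma_t^{(i)}\}$ from \eqref{eq:assumption-gamma} to bound $\mathbf{E}[\gamma_T^{(i)}]\le\sqrt{T+1}\,u_i$. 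Substituting $u_i$ and then the optimal $p_i$ makes both leading contributions equal to $C^{3/2}\sqrt{T+1}/\sqrt{\rho}$ by the same collapse as above, whereas the $1/(\gamma_{-1}^{(i)}p_i)$ remainder is $O(1)$ and thus contributes only a lower-order $O(1/T)$ term after dividing by $T+1$.

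I expect the main obstacle to be the adaptive case, specifically the bookkeeping of the random stepsize: justifying the recursion for $\mathbf{E}[1/\gamma_{t-1}^{(i)}]$, establishing the bound on $\mathbf{E}[\gamma_T^{(i)}]$, and tracking which pieces are genuinely lower order. The constant-stepsize case is essentially a direct evaluation once the balancing identity is recognized. The conceptual heart of both cases is identical: the optimal distribution \eqref{eq:opt-sampling} equalizes the gradient and proximal terms and reduces the weighted sum $\sum_i M_i\sqrt{D_i}/\sqrt{p_i}$ to $C^{3/2}$, which is precisely the improvement Lemma~\ref{lemma:joint-problem} was designed to deliver.
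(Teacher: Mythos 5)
Your proposal is correct and follows essentially the same route as the paper: the paper's own proof of this corollary is literally ``plug $\{\gamma_t\}$ and $p$ back into $\mathcal{L}(\cdot,\cdot)$,'' deferring to the preceding derivation of the stepsize rules \eqref{gamma-p} and \eqref{eq:gamma_2_sbda-r}, the recursion bound via Lemma~\ref{lemma-recursive-sum}, and the optimal distribution \eqref{eq:opt-sampling} from Lemma~\ref{lemma:joint-problem} --- exactly the pipeline you reconstruct, including the balancing of the gradient and proximal terms and the collapse $\sum_i M_i\sqrt{D_i}/\sqrt{p_i}=C^{3/2}$. Your evaluation reproduces \eqref{bound-nonuni1} exactly and the leading term of \eqref{bound-nonuni2}, with the initialization remainder correctly identified as lower order.
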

\begin{enumerate}
\item if $\gamma_{t}^{\left(i\right)}=\sqrt{\frac{\left(1+T\right)}{2\rho C}}M_{i}^{4/3}D_{i}^{-1/3}$,
$t\ge-1$, $i=1,2,\ldots,n$ , then 
\begin{equation}
\mathbf{E}\left[f\left(\bar{x}\right)-f\left(x^{*}\right)\right]\le\frac{\sqrt{2}}{\sqrt{\rho}}\frac{C^{3/2}}{\sqrt{T+1}}.\label{bound-nonuni1}
\end{equation}

\item if $\gamma_{-1}^{\left(i\right)}=\sqrt{\frac{1}{\rho C}}M_{i}^{4/3}D_{i}^{-1/3}$and
$\gamma_{t}^{(i)}=\begin{cases}
\sqrt{\frac{\left(t+1\right)}{\rho C}}M_{i}^{4/3}D_{i}^{-1/3} & \text{if }i=i_{t},\\
\gamma_{t-1}^{\left(i\right)} & \text{o.w.}
\end{cases}$, $t\ge0$, $i=1,2,\ldots,n$, then 
\begin{equation}
\mathbf{E}\left[f\left(\bar{x}\right)-f\left(x^{*}\right)\right]\le\frac{C^{3/2}}{\sqrt{\rho}}\left[\frac{2}{\sqrt{T+1}}+\frac{1}{2\left(T+1\right)}\right].\label{bound-nonuni2}
\end{equation}
\end{enumerate}
\begin{proof}
It remains to plug the value of $\left\{ \gamma_{t}\right\} $, $p$
back into $\mathcal{L}\left(,\right)$. 
\end{proof}
It is interesting to compare the convergence properties of SBDA-r
with that of SBDA-u and SBMD. SBDA with uniform sampling of block
coordinates only yields suboptimal dependence on the multiplicative
constants. Nevertheless, the rate can be further improved by employing
optimal nonuniform sampling. To develop further intuition, we relate
the two rates of convergence with the help of Hölder's inequality:
\[
\left[\sum_{i=1}^{n}\left(M_{i}^{2/3}D_{i}^{1/3}\right)\right]^{3/2}\le\left\{ \left[\sum_{i=1}^{n}\left(M_{i}^{2/3}D_{i}^{1/3}\right)^{3/2}\right]^{2/3}\cdot\left[\sum_{i=1}^{n}1^{3}\right]^{1/3}\right\} ^{3/2}=\sum_{i=1}^{n}\left(M_{i}\sqrt{D_{i}}\right)\cdot\sqrt{n}.
\]
The inequality is tight if and only if for some constant $c>0$ and
$i$, $1\le i\le n$: $M_{i}\sqrt{D_{i}}=c$. In addition, we compare
SBDA-r with a nonuniform version of SBMD\footnote{See Corollary 2.2, part a) of \cite{Dang:arXiv1309.2249}},
which obtains \scriptsize $O\left(\frac{\sqrt{\sum_{i=1}^{n}M_{i}^{2}}\cdot\sum_{i=1}^{n}\sqrt{D_{i}}}{\sqrt{T}}\right)$\normalsize,
assuming blocks are sampled based on the distribution $p_{i}\propto\sqrt{D_{i}}$.
Again, applying Hölder's inequality, we have
\[
\left[\sum_{i=1}^{n}\left(M_{i}^{2/3}D_{i}^{1/3}\right)\right]^{3/2}\le\left\{ \left[\sum_{i=1}^{n}\left(M_{i}^{2/3}\right)^{3}\right]^{1/3}\cdot\left[\sum_{i=1}^{n}\left(D_{i}^{1/3}\right)^{3/2}\right]^{2/3}\right\} ^{3/2}=\sqrt{\sum_{i=1}^{n}M_{i}^{2}}\cdot\sum_{i=1}^{n}\sqrt{D_{i}}.
\]

In conclusion, SBDA-r, equipped with an optimized block sampling scheme,
obtains the \textit{best} iteration complexity among all the block
subgradient methods.

\section{Experiments}

In this section, we examine the theoretical advantages of SBDA through
several preliminary experiments. For all the algorithms compared,
we estimate the parameters and tune the best stepsizes using separate
validation data. We first investigate the performance of SBDA on nonsmooth
deterministic problems by comparing its performance against other
nonsmooth algorithms. We compare with the following algorithms: SM1
and SM2 are subgradient mirror decent methods with stepsizes $\gamma_{1}\propto\frac{1}{\sqrt{t}}$
and $\gamma_{2}\propto\frac{1}{\|g\left(x\right)\|}$ respectively.
Finally, SGD is stochastic mirror descent and SDA a stochastic subgradient
dual averaging method. We study the problem of robust linear regression
($\ell_{1}$ regression) with the objective $\phi\left(x\right)=\frac{1}{m}\sum_{i=1}^{m}\left|b_{i}-a_{i}^{T}x\right|$.
The optimal solution $x^{*}$ and each $a_{i}$ are generated from
$\mathcal{N}\left(0,I_{n\times n}\right)$. In addition, we define
a scaling vector $s\in\mathbb{R}^{n}$ and $S$ a diagonal matrix
s.t. $S_{ii}=s_{i}$. We let $b=\left(AS\right)x^{*}+\sigma$, where
$A=\left[a_{2},a_{2},\ldots,a_{m}\right]^{T}\in\mathbb{R}^{m\times n}$,
and the noise $\sigma\sim\mathcal{N}\left(0,\rho I\right)$ . We set
$\rho=0.01$ and $m=n=5000$.

We plot the optimization objective with the number of passes of the
dataset in Figure~\ref{fig:l1_regression}, for four different choices
of $s$. In the first test case (leftmost subfigure), we let $s=\left[1,1,\ldots,1\right]^{T}$
so that columns of $A$ correspond to uniform scaling. We find that
SBDA-u and SBDA-r have slightly better performance than the other
algorithms while exhibiting very similar performance. In the next
three cases, $s$ is generated from the distribution $p\left(x;a\right)=a\left(1-x\right)^{a-1}$,
$0\le x\le1$, $a>0$. We set $a=1,5,30$ respectively. Employing
a large $a$ ensures that the bounds on the norms of block subgradients
follow the power law. We observe that stochastic methods outperform
the deterministic methods, and SBDA-based algorithms have comparable
and often better performance than SGD algorithms. In particular, SBDA-r
exhibits the best performance, which clearly shows the advantage of
SBDA with the nonuniform sampling scheme. 

\begin{figure}
\includegraphics[scale=0.2]{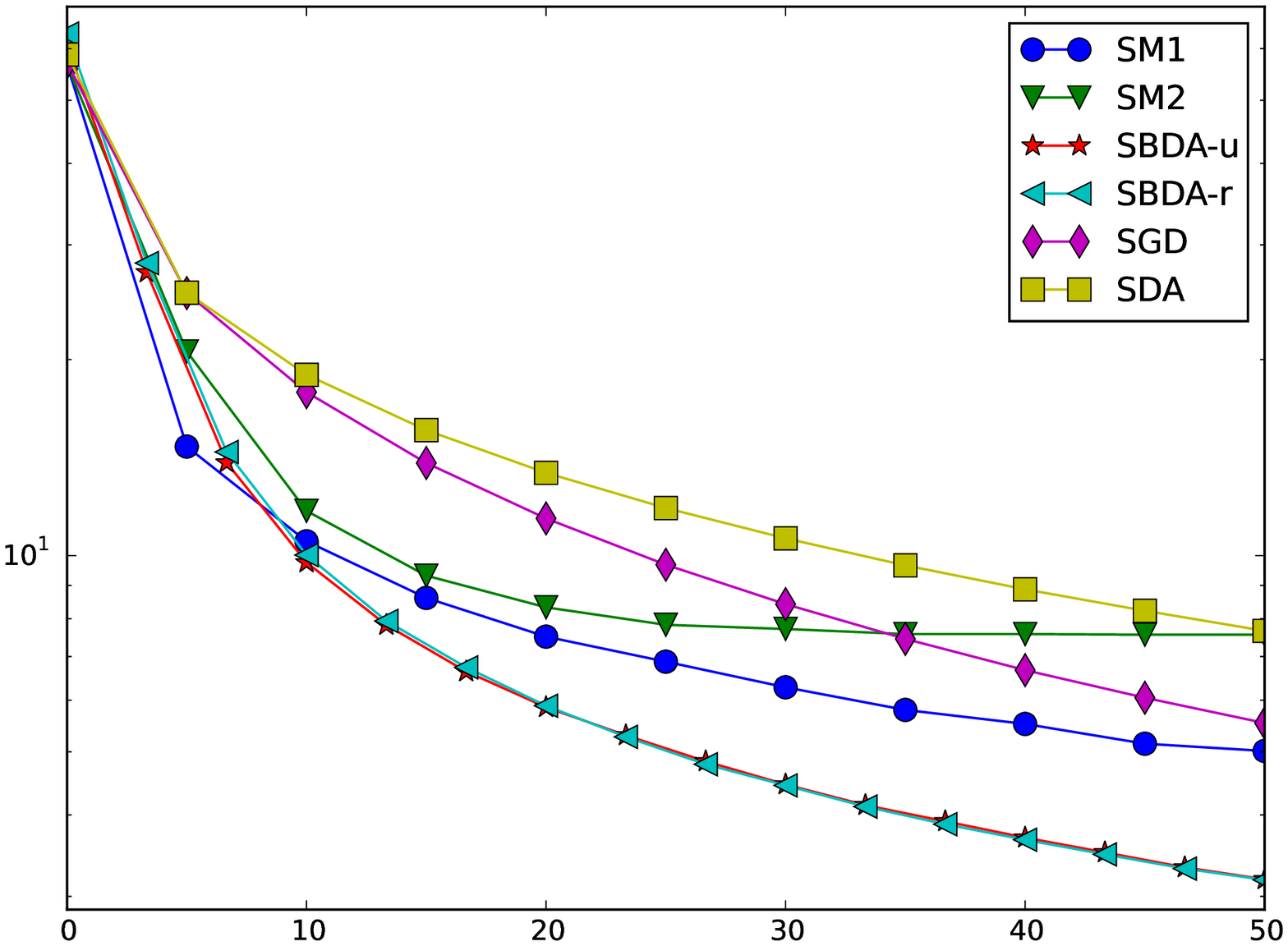}\includegraphics[scale=0.2]{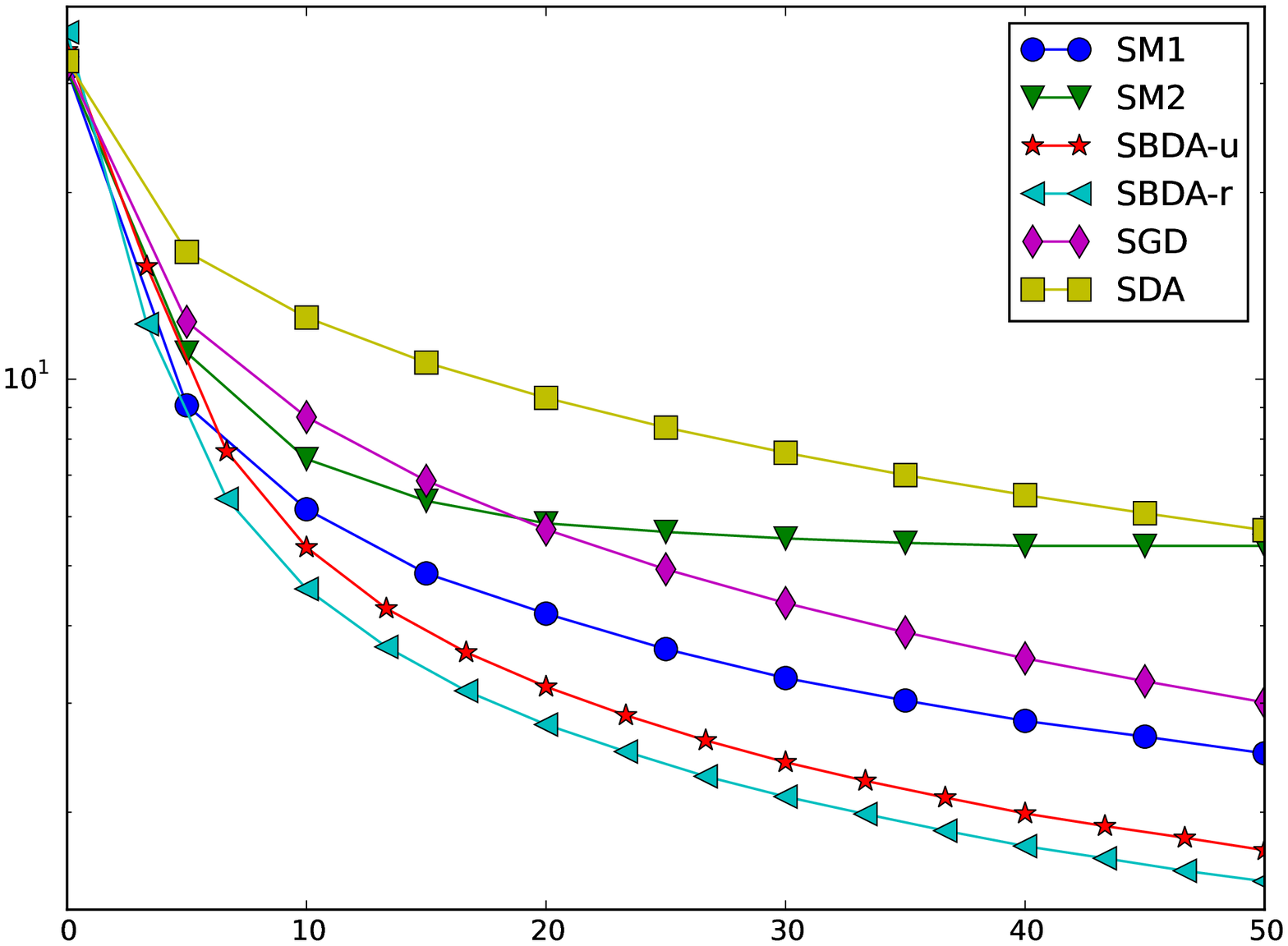}\includegraphics[scale=0.2]{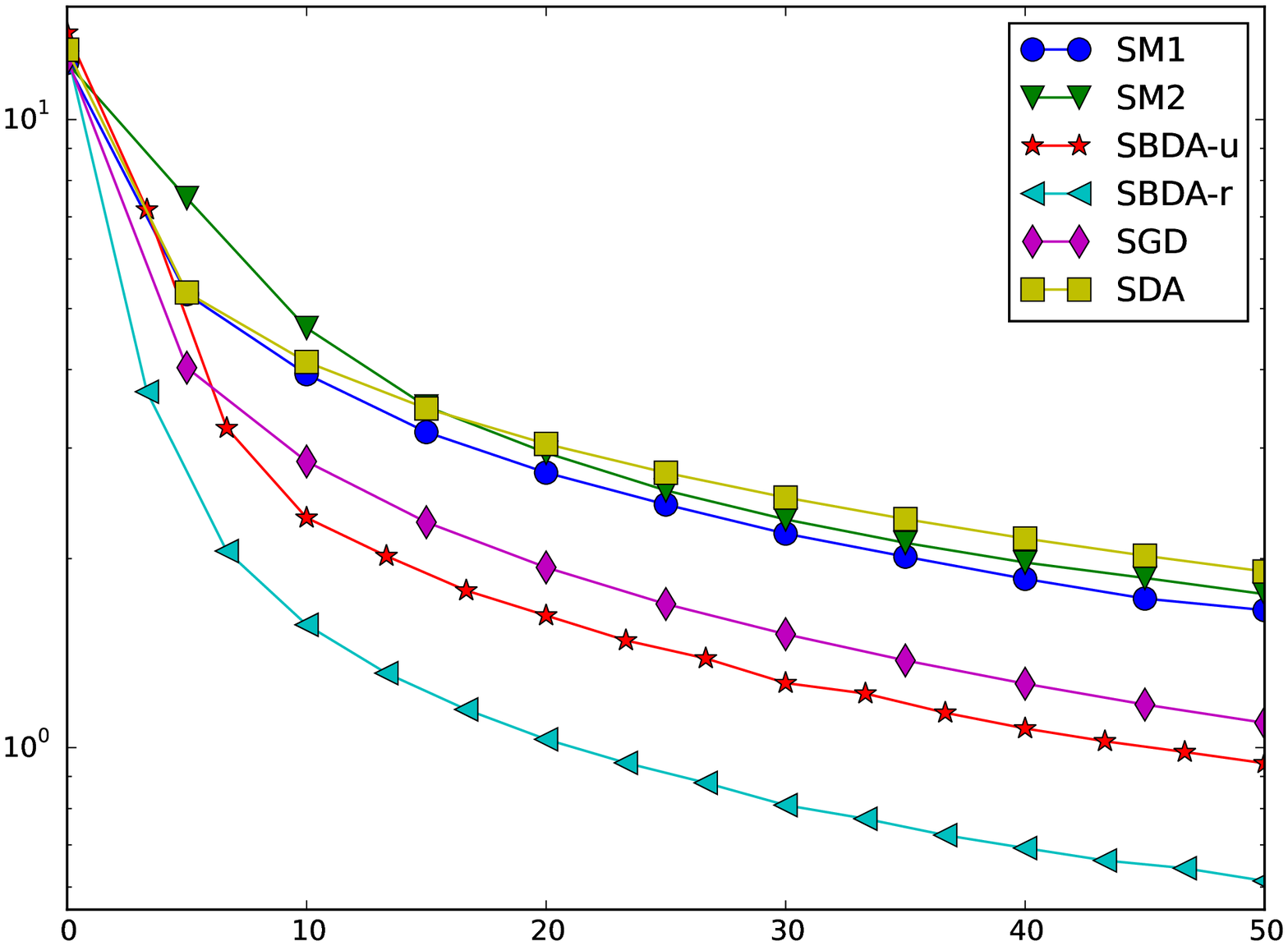}\includegraphics[scale=0.2]{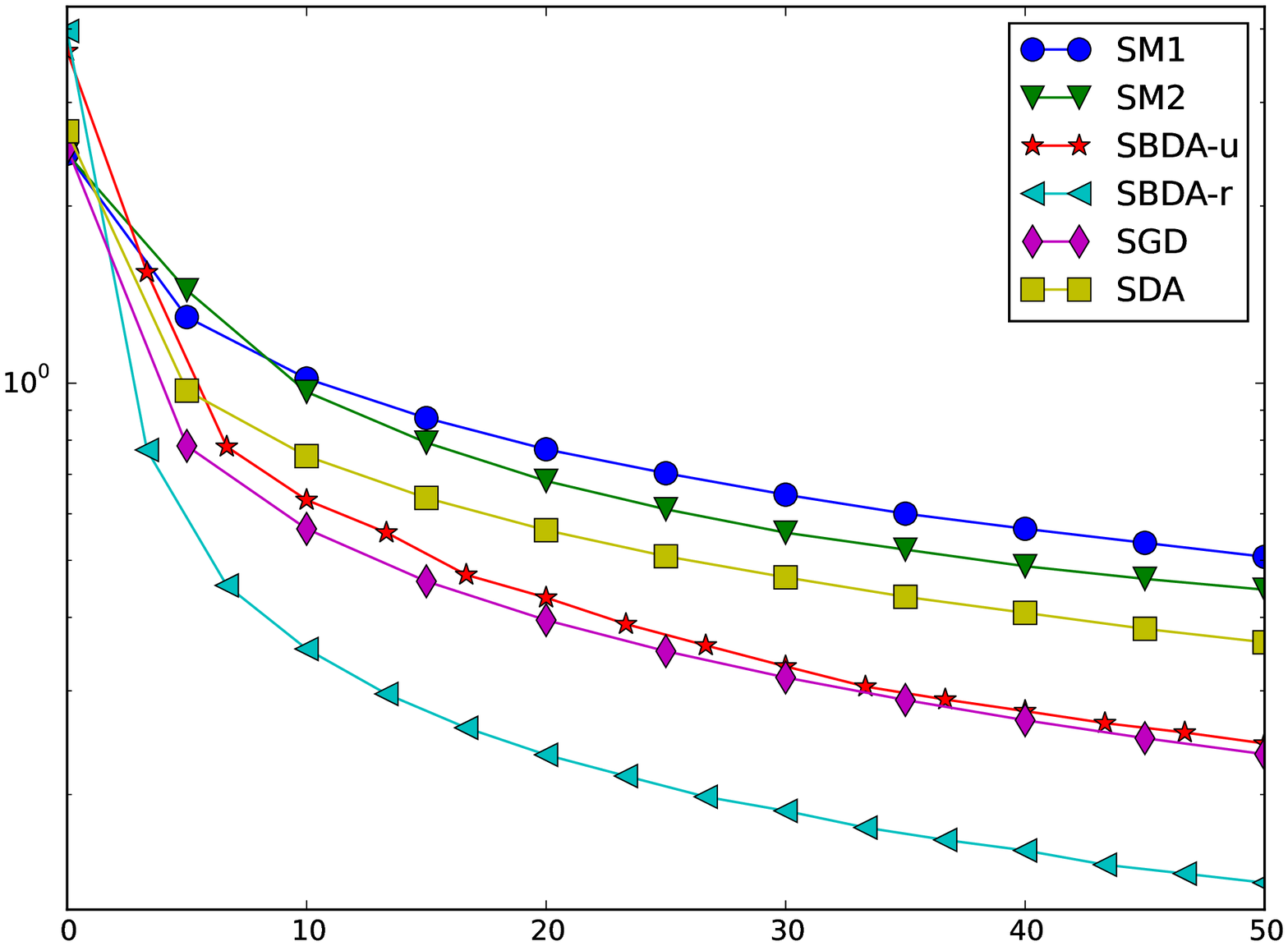}\caption{\label{fig:l1_regression} Tests on $\ell_{1}$ regression.}
\end{figure}

Next, we examine the performance of SBDA for online learning and stochastic
approximation. We conduct simulated experiments on the problem: $\phi\left(x\right)=\mathbf{E}_{a,b}\left[(b-\langle La,x\rangle)^{2}\right],$
where the aim is to fit linear regression under a linear transform
$L$. The transform matrix $L\in M^{n\times n}$ is generated as follows:
we first sample a matrix $\tilde{L}$ for which each entry $\tilde{L}_{i,j}\sim\mathcal{N}(0,1)$.
$L$ is obtained from $\tilde{L}$ with $90\%$ of the rows being
randomly rescaled by a factor $\rho$. To obtain the optimal solution
$x^{*}$, we first generate a random vector from the distribution
$\mathcal{N}(0,I_{n\times n})$ and then truncate each coordinate
in $[-1,1]$. Simulated samples are generated according to $b=\langle La,x^{*}\rangle+\varepsilon$
where $\varepsilon\in\mathcal{N}(0,0.01I_{n\times n})$. We let $n=200$,
and generate 3000 independent samples for training and 10000 independent
samples for testing. 

To compare the performances of these algorithms under various conditions,
we tune the parameter $\rho$ in $[1,0.1,0.05,0.01]$. As can be seen
from above, $\rho$ affects the estimation of block-wise parameters
$\left\{ M_{i}\right\} $. In Figure~\ref{synthetic_ls}, we show
the objective function for the average of 20 runs. The experimental
results show the advantages of SBDA over SBMD. When $\rho=1$, SBDA-u,
SBDA-r, and SBMD have the same theoretical convergence rate, and exhibit
similar performance. However, as $\rho$ decreases, the ``importance''
of $90\%$ of the blocks is diminishing and we find SBDA-u and SBDA-r
both outperform SBMD. Moreover, SBDA-r seems to perform the best,
suggesting the advantage of our proposed stepsize and sampling schemes
which are adaptive to the block structures. These observations lends
empirical support to our theoretical analysis. 
\begin{figure}
\includegraphics[scale=0.2]{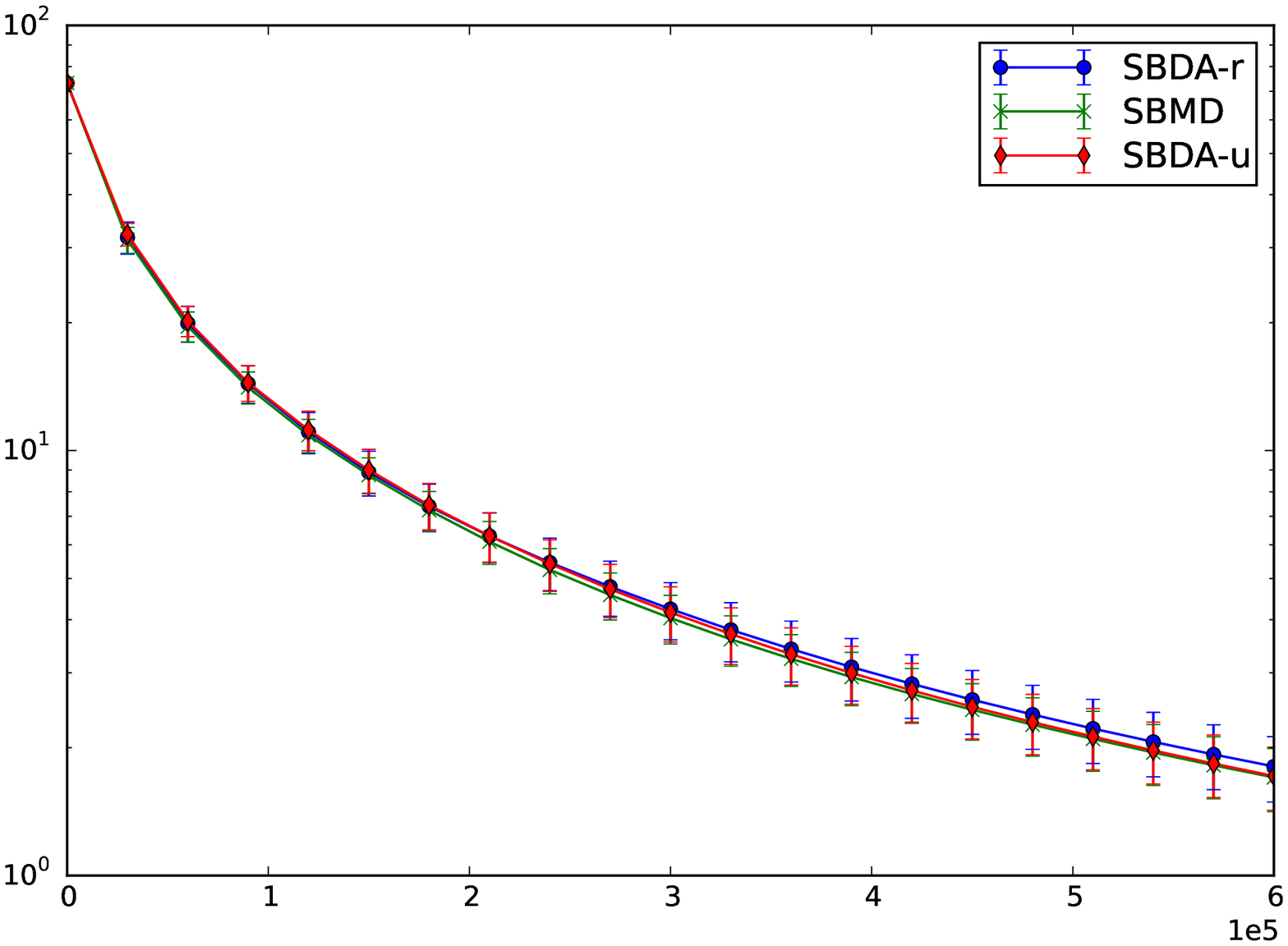}\includegraphics[scale=0.2]{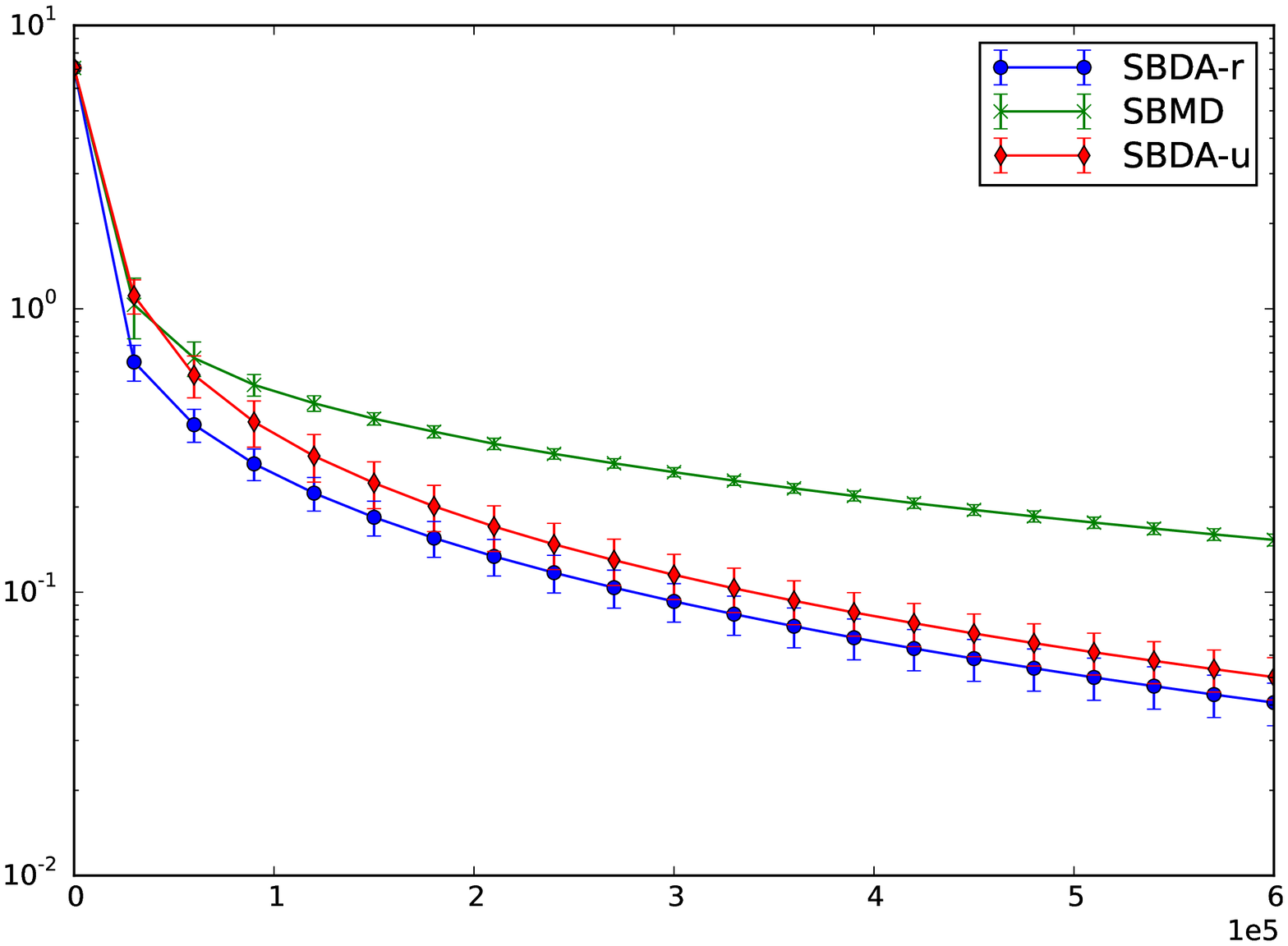}\includegraphics[scale=0.2]{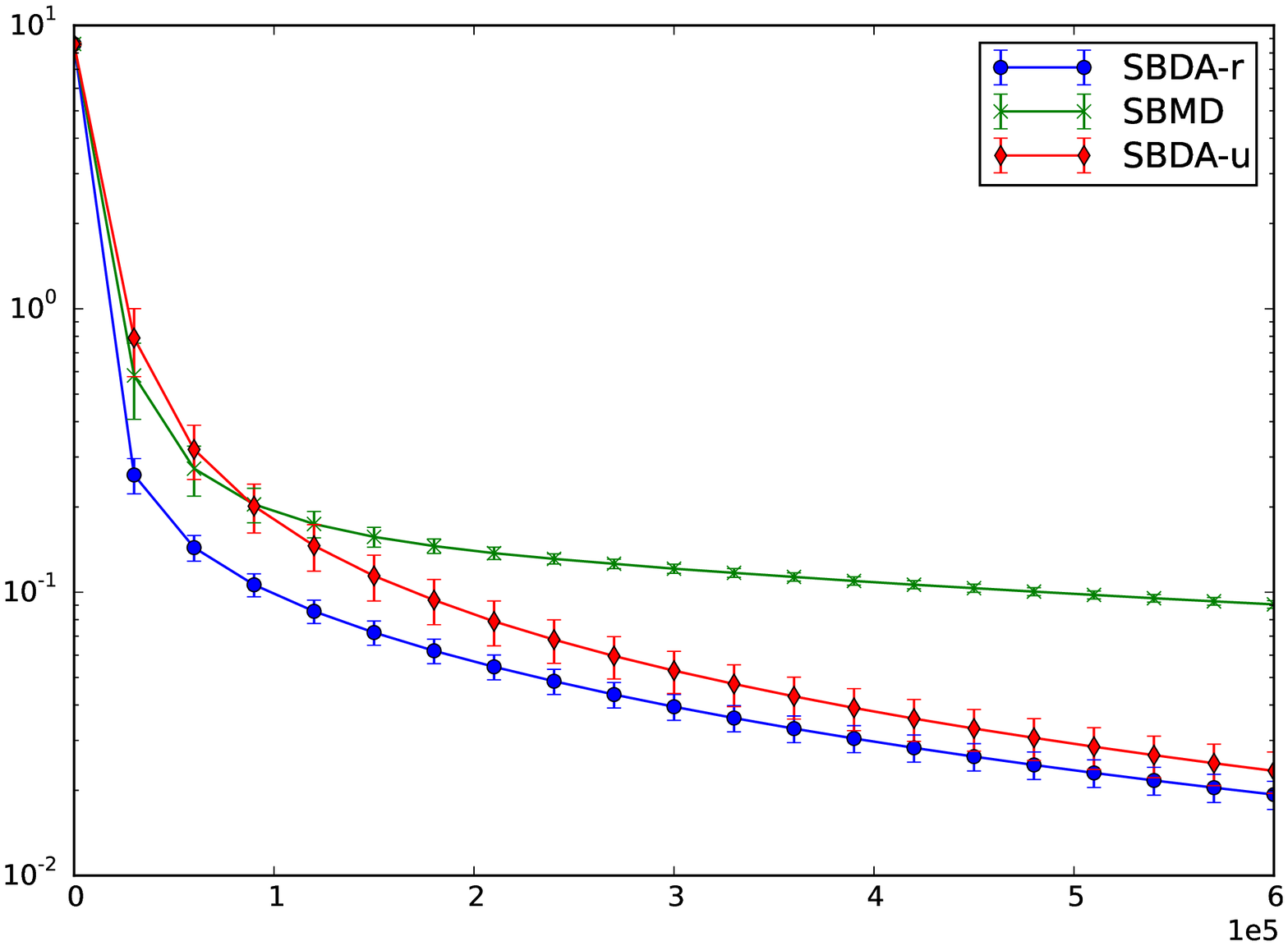}\includegraphics[scale=0.2]{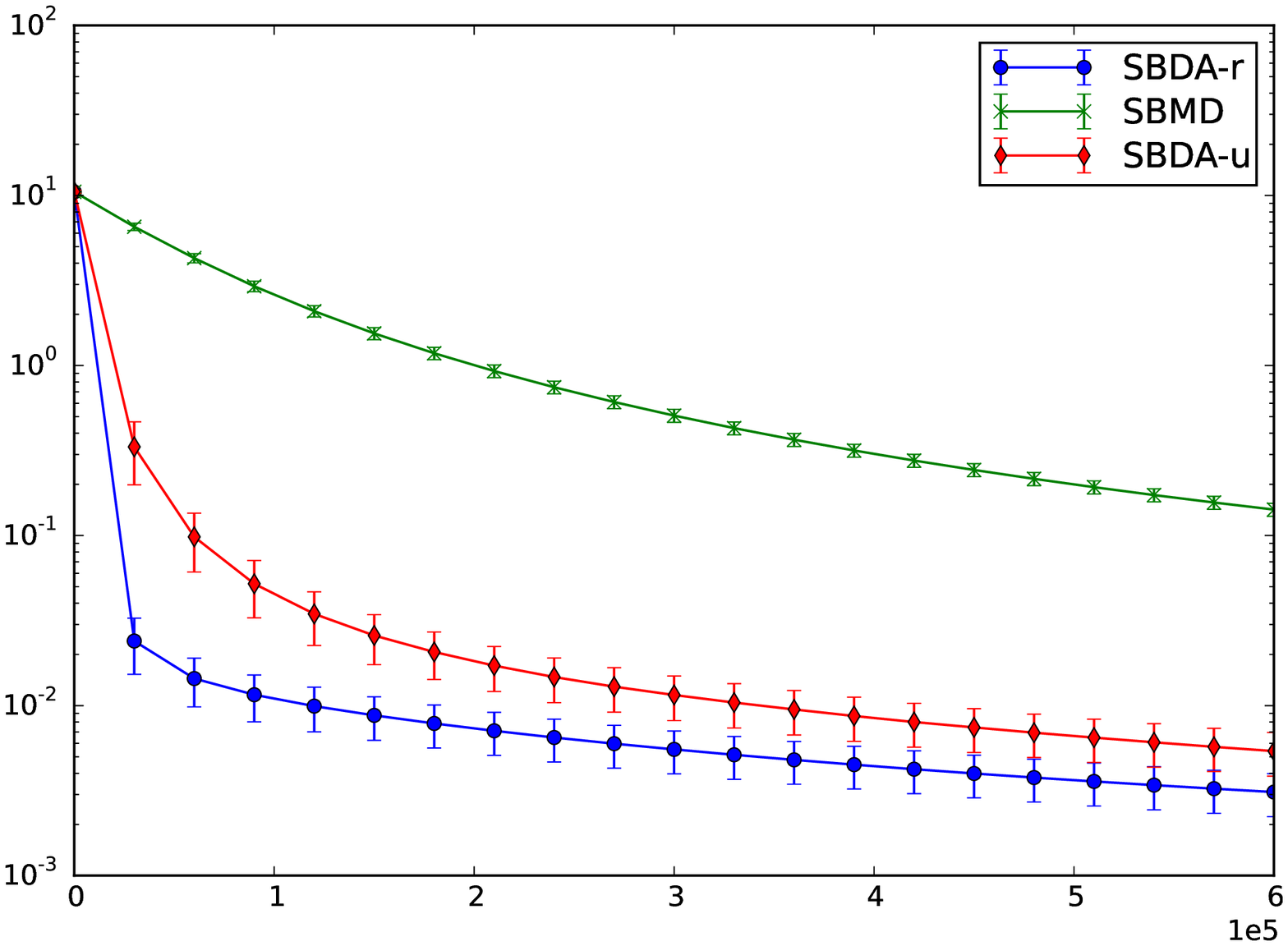}
\protect\caption{Tests on linear regression, Left to right: $\rho=1,0.1,0.05,0.01$.
\label{synthetic_ls}}
\end{figure}

Our next  experiment considers online $\ell_{1}$ regularized linear
regression (Lasso):\small 
\begin{equation}
\min_{w\in\mathbb{R}^{n}}\frac{1}{2}\mathbf{E}_{\left(y,x\right)}\left[\left(y-w^{T}x\right)^{2}\right]+\lambda\|w\|_{1}\label{eq:pob-regression}
\end{equation}
\normalsize While linear regression has been well studied in the
literature, recent work is interested in efficient regression algorithms
under different adversarial circumstances \cite{Cesa-Bianchi:2011:ELP:1953048.2078197,hazan2012linear,kukliansky2014attribute}.
Under the assumptions of limit budgets, the learner only partially
observes the features for each incoming instance, but is allowed to
choose the sampling distribution of the features. In addition, we
explicitly enforce the $\ell_{1}$ penalty, expecting to learn a sparse
solution that effectively reduces testing cost. To apply stochastic
methods, we estimate the stochastic coordinate gradient of the least
squares loss. For the sake of simplicity, we assume for each input
sample instance $\left(y,x\right)$, two features $\left(i_{t},j_{t}\right)$
are revealed. When we sample one coordinate $j_{t}$ from some distribution
$\left\{ p_{j}\right\} $, then $\frac{1}{p_{j_{t}}}w^{(j_{t})}x^{(j_{t})}$
is an unbiased estimator of $w^{T}x$. Hence the defined value $G^{\left(i_{t}\right)}=\frac{1}{p_{j_{t}}}x^{\left(i_{t}\right)}x^{\left(j_{t}\right)}w^{\left(j_{t}\right)}-yx^{\left(i_{t}\right)}$
is an unbiased estimator of the $i_{t}$-th coordinate gradient.

We adapt both SBMD and SBDA-u to these problems and conduct the experiments
on datasets \textsf{covtype} and \textsf{mnist} (digit ``3 vs 5'').
We also implement MD (composite mirror descent) and DA (regularized
dual averaging method). For all the methods, the training uses the
same total number of features. However, SBMD and SBDA-u obtain features
sampled using a uniform distribution; both MD and DA have ``unfair''
access to observe full feature vectors and therefore have the advantages
of lower variance. We plot in Figures~\ref{fig:lasso_1} and \ref{fig:lasso_2},
the optimization error and sparsity patterns with respect to the penalty
weights $\lambda$ on the two datasets. It can be seen that SBDA-u
has comparable and often better optimization accuracy than SBMD. In
addition, we also plot the sparsity patterns for different values
of $\lambda$. It can be seen that SBDA-u is very effective in enhancing
sparsity, more efficient than SBMD, MD, and comparable to DA which
doesn't have such budget constraints. 

\begin{figure}
\subfloat[Test on \textsf{covtype} dataset.\label{fig:lasso_1}]{\includegraphics[scale=0.2]{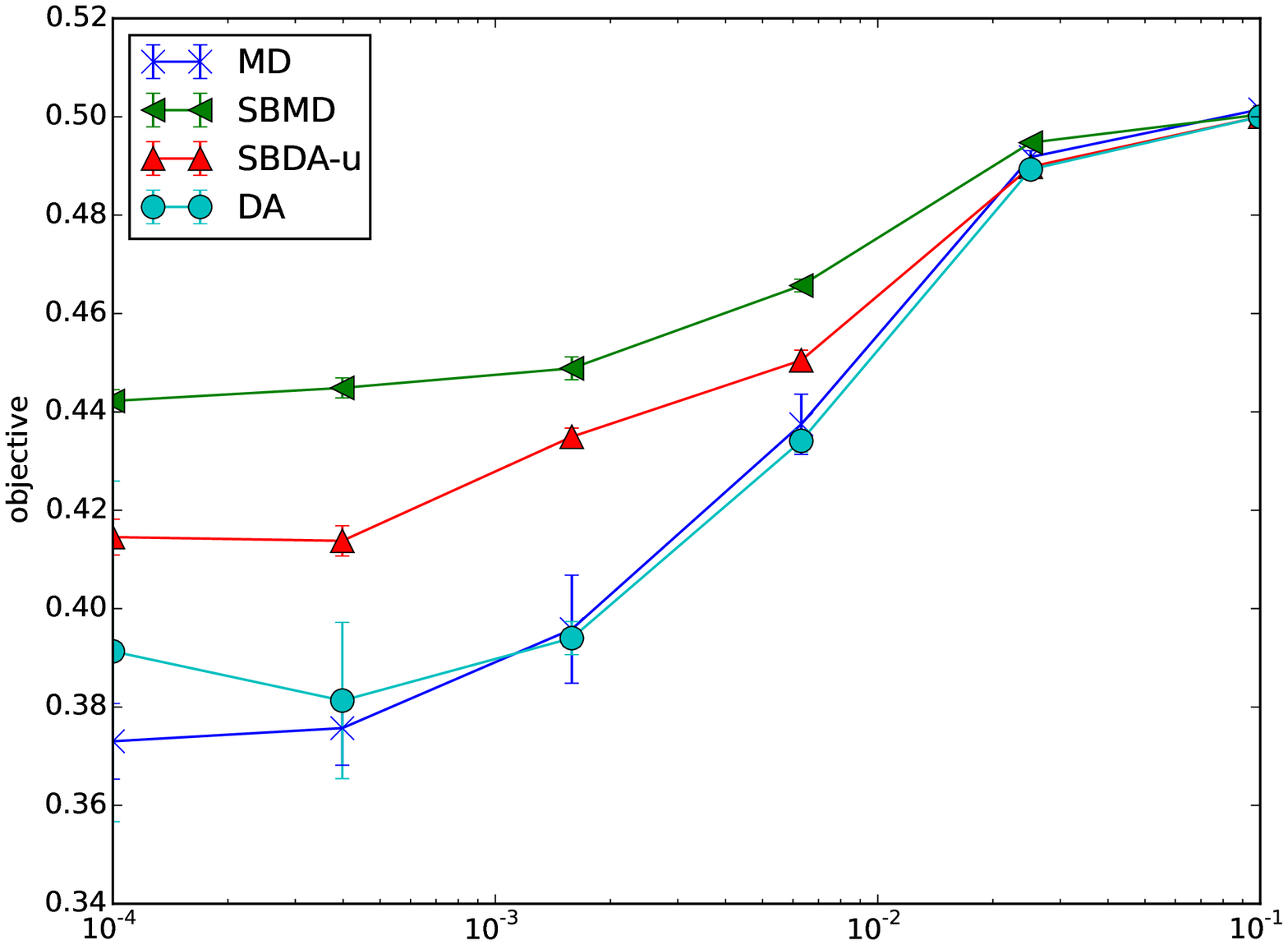}\includegraphics[scale=0.2]{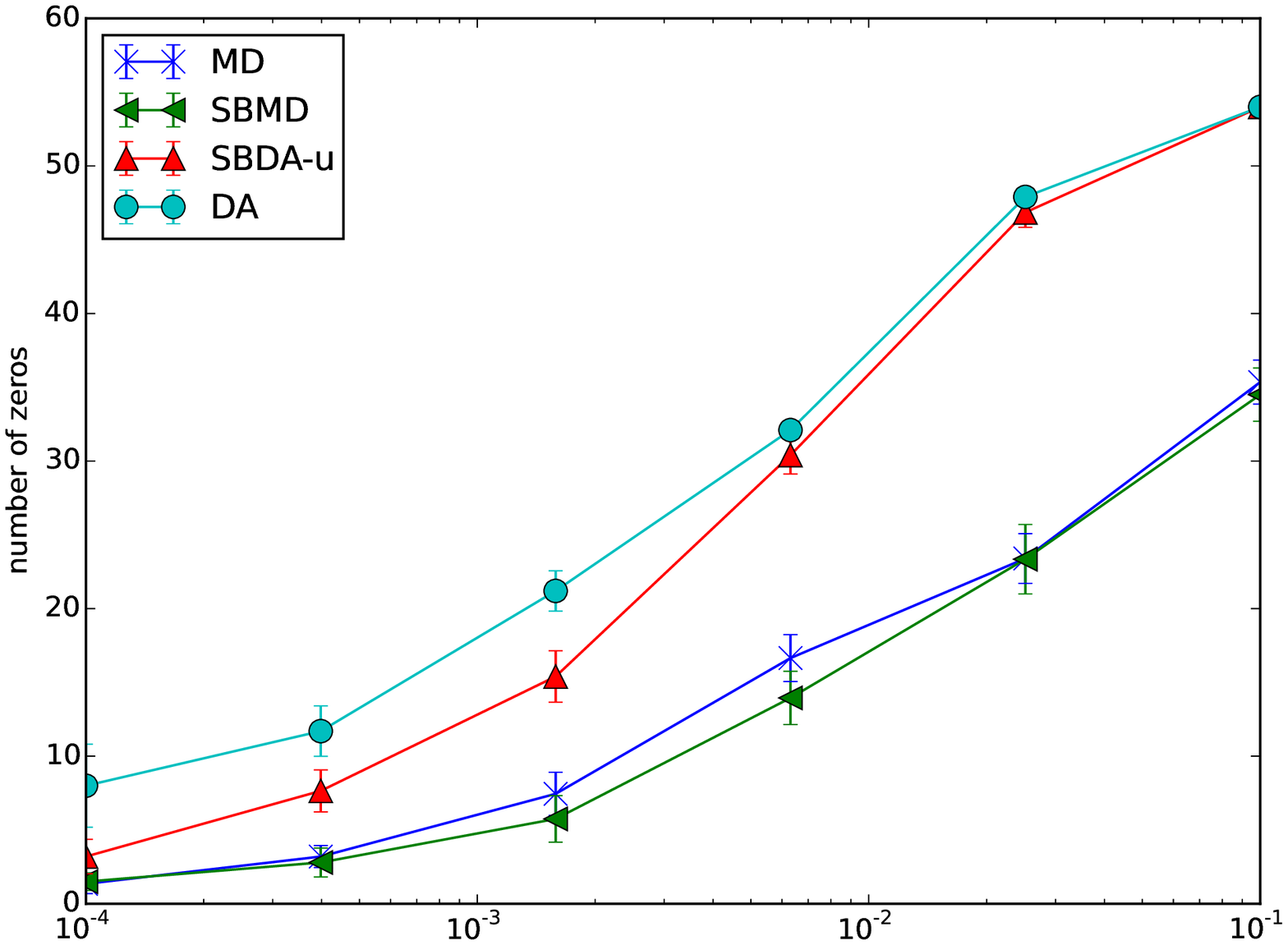}}\subfloat[Test on \textsf{mnist} dataset.\label{fig:lasso_2}]{\includegraphics[scale=0.2]{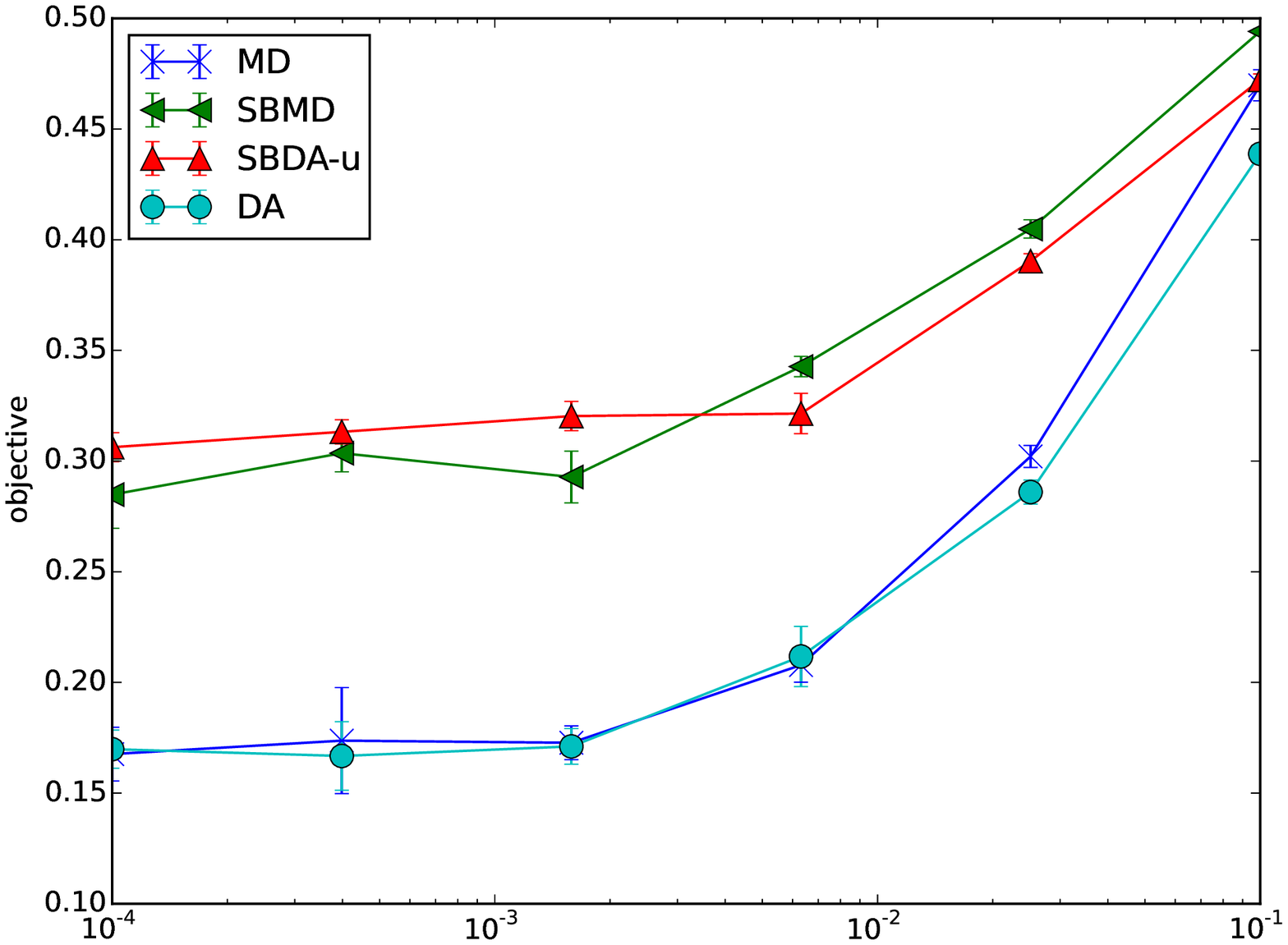}\includegraphics[scale=0.2]{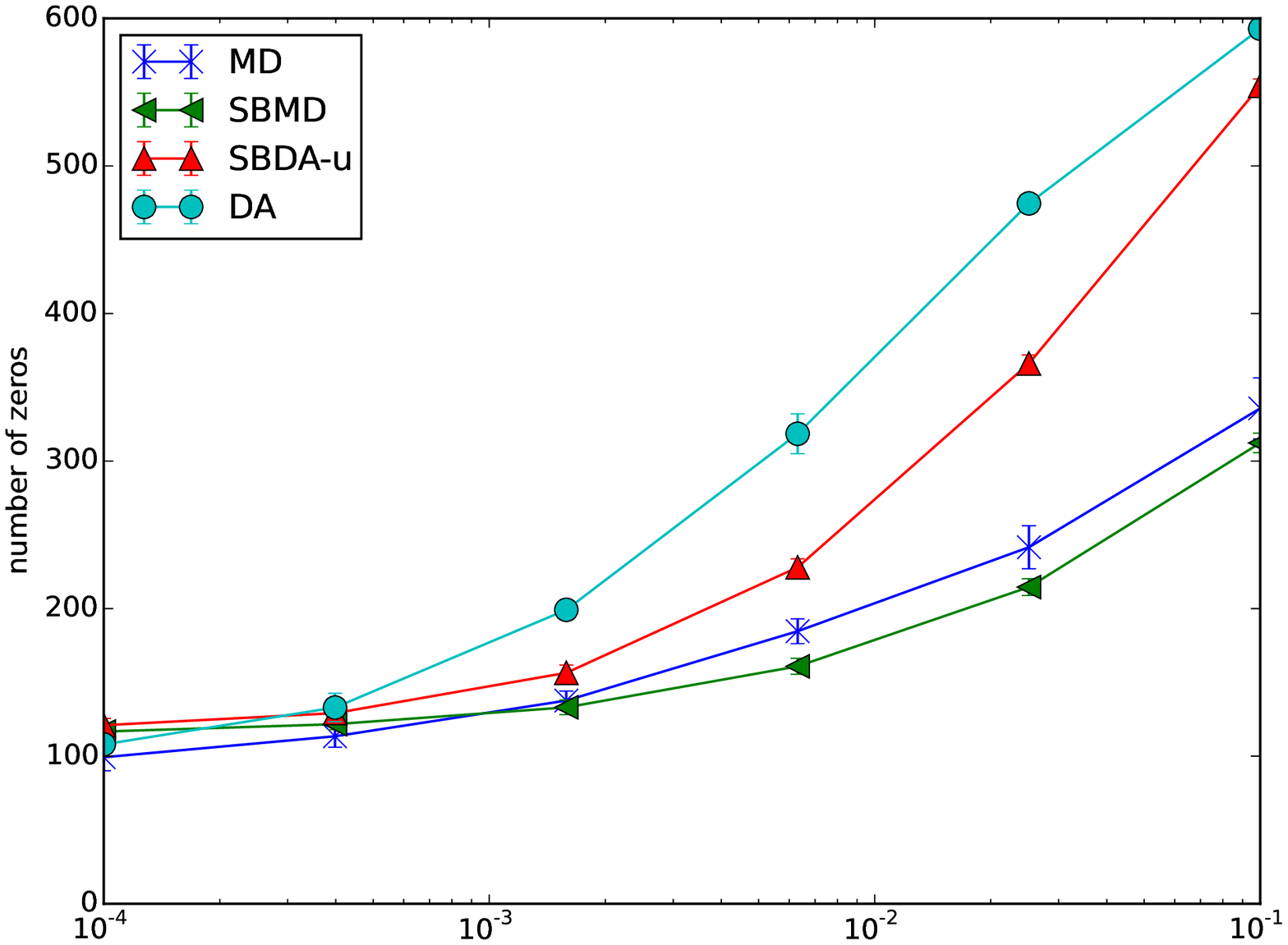}}

\caption{Tests on online lasso with limited budgets}
\end{figure}

\section{Discussion}

In this paper we introduced SBDA, a new family of block subgradient
methods for nonsmooth and stochastic optimization, based on a novel
extension of dual averaging methods. We specialized SBDA-u for regularized
problems with nonsmooth or strongly convex regularizers, and SBDA-r
for general nonsmooth problems. We proposed novel randomized stepsizes
and optimal sampling schemes which are truly block adaptive, and thereby
obtain a set of sharper bounds. Experiments demonstrate the advantage
of SBDA methods compared with subgradient methods on nonsmooth deterministic
and stochastic optimization. In the future, we will extend SBDA to
an important class of regularized learning problems consisting of
the finite sum of differentiable losses. On such problems, recent
work \cite{shalev2012stochastic,shalev2013accelerated} shows efficient
BCD convergence at linear rate. The works in \cite{zhao2014accelerated,wang2014randomized}
propose randomized BCD methods that sample both primal and dual variables.
However both methods apply conservative stepsizes which take the maximum
of the block Lipschitz constant. It would be interesting to see whether
our techniques of block-wise stepsizes and nonuniform sampling can
be applied in this setting as well to obtain improved performance.

\section{Appendix}

\subsection*{Proof of Lemma \ref{lem:prox-ineq-1}}
\begin{proof}
The first part comes from \cite{tseng2008accelerated}. Let $g\left(z\right)$
denote any subgradient of $f$ at $z$. Since $f\left(x\right)$ is
strongly convex, we have $f\left(x\right)\ge f\left(z\right)+\langle U_{i}g^{\left(i\right)}\left(z\right),x-z\rangle+\frac{\lambda}{2}\|x-z\|_{\left(i\right)}^{2}$.
By the definition of $z$ and optimality condition, we have $g^{\left(i\right)}\left(z\right)=-\nabla_{i}d\left(z\right)$.
Thus 
\[
f\left(x\right)+\left\langle \nabla_{i}d\left(z\right),x-z\right\rangle \ge f\left(z\right)+\frac{\lambda}{2}\|x-z\|_{\left(i\right)}^{2}.
\]
It remains to apply the definition $x=z+U_{i}y$ and $\mathcal{V}\left(z,x\right)=d\left(x\right)-d\left(z\right)-\left\langle \nabla d\left(z\right),x-z\right\rangle $.
\end{proof}

\subsection*{Proof of Lemma \ref{lem:prox-ineq-2}}
\begin{proof}
Let $h\left(y\right)=\max_{x\in X}\left\{ \left\langle y,x\right\rangle -\Psi\left(x\right)\right\} $,
since $\Psi\left(\cdot\right)$ is strongly convex and separable,
$h\left(\cdot\right)$ is convex and differentiable and its $i$-th
block gradient $\nabla_{i}h\left(\cdot\right)$ is $\frac{1}{\rho_{i}}$-smooth
. Moreover, we have $\nabla h\left(0\right)=x_{0}$ by the definition
of $x_{0}$. Thus 
\[
h\left(-U_{i}g^{\left(i\right)}\right)\le h\left(0\right)+\left\langle x_{0},-U_{i}g^{\left(i\right)}\right\rangle +\frac{1}{2\rho_{i}}\|g\|_{\left(i\right),*}^{2}.
\]
It remains to plug in the definition of $h\left(\cdot\right)$, $z$,
$x_{0}$.
\end{proof}

\subsection*{Proof of Lemma \ref{lem:function-bound}}
\begin{conjecture*}
By convexity of $f\left(\cdot\right)$, we have $f(z)\le f(x)+\langle g(z),z-x\rangle$.
In addition, 
\begin{eqnarray*}
\langle g(z),z-x\rangle & = & \langle g(x),z-x\rangle+\langle g(z)-g(x),z-x\rangle\\
 & = & \langle g^{(i)}(x),y\rangle_{(i)}+\langle g^{(i)}(z)-g^{(i)}(x),y\rangle_{(i)}\\
 & \le & \langle g^{(i)}(x),y\rangle_{(i)}+\|g^{(i)}(z)-g^{(i)}(x)\|_{(i),*}\cdot\|y\|_{(i)}.
\end{eqnarray*}
The second equation follows from the relation between $x,y,z$ and
the last one from the Cauchy-Schwarz inequality. Finally the conclusion
directly follows from (\ref{ass-grad-bound}).
\end{conjecture*}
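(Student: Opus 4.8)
The plan is to combine the subgradient inequality for the (nonsmooth) convex function $f$ with the block-coordinate structure of the displacement $x-z=U_{i}y$, and then to control the resulting cross term purely through the block-$i$ subgradient bound (\ref{ass-grad-bound}). First I would fix a subgradient $g(z)\in\partial f(z)$ --- concretely the one $g(z)=\mathbf{E}_{\xi}[G(z,\xi)]$ singled out in the preliminaries, so that the consequence $\|g^{(i)}(\cdot)\|_{(i),*}\le M_{i}$ of the assumption is available. Convexity of $f$ then gives $f(z)\le f(x)+\langle g(z),z-x\rangle$, which is the only place where nonsmoothness enters; everything afterward is an exact identity or an elementary norm estimate.

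The key structural observation is that $x$ and $z$ differ only in their $i$-th block, so $x-z=U_{i}y$ is supported on block $i$ and the inner product $\langle g(z),z-x\rangle$ collapses to its $i$-th block, $\pm\langle g^{(i)}(z),y\rangle_{(i)}$ (the sign being fixed once and for all by the convention $x=z+U_{i}y$). I would then add and subtract $g^{(i)}(x)$ to split this into a leading term $\langle g^{(i)}(x),y\rangle_{(i)}$ --- exactly the term appearing in the claim --- and a residual $\langle g^{(i)}(z)-g^{(i)}(x),y\rangle_{(i)}$ measuring the mismatch between the two subgradients in the single active block.

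It remains to bound the residual, which is where the assumption (\ref{ass-grad-bound}) does its work: by Cauchy--Schwarz in the block norm, $\langle g^{(i)}(z)-g^{(i)}(x),y\rangle_{(i)}\le\|g^{(i)}(z)-g^{(i)}(x)\|_{(i),*}\,\|y\|_{(i)}$, and the triangle inequality on the dual norm together with $\|g^{(i)}(z)\|_{(i),*},\|g^{(i)}(x)\|_{(i),*}\le M_{i}$ produces the factor $2M_{i}$, giving the asserted $2M_{i}\|y\|_{(i)}$. There is no deep obstacle here, but the point worth emphasizing --- and the reason this lemma is useful for a block method --- is that only the \emph{block-local} bound $M_{i}$ is ever invoked: the cross term is controlled in the single dual norm $\|\cdot\|_{(i),*}$ rather than through any global subgradient bound, which is precisely what keeps the per-iteration constants block-adaptive in the later convergence theorems.
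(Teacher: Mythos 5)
Your proof is correct and follows the paper's own argument essentially step for step: the subgradient inequality at $z$, the add-and-subtract of $g^{(i)}(x)$ restricted to the single active block, Cauchy--Schwarz in the block dual norm, and the triangle inequality with $\|g^{(i)}(\cdot)\|_{(i),*}\le M_{i}$ from (\ref{ass-grad-bound}) to produce the factor $2M_{i}$. You even make explicit the final triangle-inequality step (and the sign convention in $x=z+U_{i}y$) that the paper leaves implicit, so there is nothing to add.
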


\subsection*{Proof of Lemma \ref{lemma-recursive-sum}}
\begin{proof}
Let $A_{t}=\sum_{s=0}^{t}a_{t}$, $B_{t}=\sum_{s=1}^{t}b_{t}$. It
is equivalent to show $A_{t}\le B_{t}+\frac{A_{0}}{p}$. Then

\begin{align*}
A_{t} & =pB_{t}+A_{0}+\left(1-p\right)A_{t-1}\\
 & =\left[p+\left(1-p\right)\right]\left[pB_{t-1}+A_{0}\right]+\left(1-p\right)^{2}A_{t-2}\\
 & =\left[p+\left(1-p\right)+\left(1-p\right)^{2}\right]\left[pB_{t-1}+A_{0}\right]+\left(1-p\right)^{3}A_{t-3}\\
 & =...\\
\le & \left[pB_{t}+A_{0}\right]\left[\sum_{s=0}^{t}\left(1-p\right)^{s}\right].
\end{align*}
The last inequality follows from the assumption that $B_{t}\ge B_{s}$
where $0\le s\le t$ and $A_{0}=a_{0}$. It remains to apply the inequality
$\sum_{s=0}^{t}\left(1-p\right)^{s}\le\sum_{s=0}^{\infty}\left(1-p\right)^{s}=\frac{1}{p}$.
\end{proof}

\subsection*{Proof of Lemma \ref{lem:coupling}}
\begin{proof}
If $r_{1},r_{2},r_{3}\sim\text{Bernoulli}\left(p\right)$, $c>0$,
$0<p<1$,

\begin{eqnarray*}
\mathbf{E}\left[\frac{1}{r_{1}x+r_{2}\left(a-x\right)+b}\right] & = & \frac{\left(1-p\right)^{2}}{b}+\frac{p\left(1-p\right)}{a-x+b}+\frac{p\left(1-p\right)}{x+b}+\frac{p^{2}}{a+b}\\
 & \le & \frac{\left(1-p\right)^{2}}{b}+\frac{p\left(1-p\right)}{a+b}+\frac{p\left(1-p\right)}{b}+\frac{p^{2}}{a+b}\\
 & = & \frac{1-p}{b}+\frac{p}{a+b}\\
 & = & \mathbf{E}\left[\frac{1}{r_{3}a+b}\right].
\end{eqnarray*}
To see the first inequality, let $f\left(x\right)=\frac{A}{x+c}+\frac{B}{a-x+c}$,
where $A,B>0$, it can be seen that $f\left(\cdot\right)$ is convex
in $\left[0,a\right]$, then $\max_{x\in\left[0,a\right]}f\left(x\right)=\max\left\{ f\left(0\right),f\left(a\right)\right\} $.
\end{proof}

\subsection*{Proof of Lemma \ref{lemma:joint-problem}}
\begin{proof}
Let $x^{*}$, $y^{*}$ be the optimal solution of $\min_{x,y}\mathcal{L}\left(x,y,a,b\right)$.
We consider two subproblems. Firstly, $x^{*}=\arg\min_{x}\mathcal{L}\left(x,y^{*},a,b\right)$.
Since $\frac{a_{i}}{x_{i}}+\frac{x_{i}}{b_{i}y_{i}^{*}}\ge2\sqrt{\frac{a_{i}}{b_{i}y_{i}^{*}}}$,
at optimality 
\begin{equation}
\frac{a_{i}}{x_{i}^{*}}=\frac{x_{i}^{*}}{b_{i}y_{i}^{*}}.\label{eq:partial-optimality1}
\end{equation}
On the other hand, $y^{*}$ is the minimizer of the problem $\min_{y}\mathcal{L}\left(x^{*},y,a,b\right)$.
Applying the Cauchy-Schwarz inequality to $\mathcal{L}\left(x^{*},y,a,b\right)$,
we obtain 
\[
\sum_{i=1}^{n}\frac{x_{i}^{*}}{b_{i}y_{i}}\cdot1=\sum_{i=1}^{n}\frac{x_{i}^{*}}{b_{i}y_{i}}\sum_{1}^{n}y_{i}\ge\sum_{i}^{n}\sqrt{\frac{x_{i}^{*}}{b_{i}y_{i}}}\sqrt{y_{i}}=\sum_{i}^{n}\sqrt{x_{i}^{*}}.
\]
At optimality, the equality holds for some scalar $C>0$, 
\begin{equation}
\frac{x_{i}^{*}}{b_{i}y_{i}^{*}}=Cy_{i}^{*},\quad i=1,2,\ldots,n.\label{eq:partial-optimality2}
\end{equation}
It remains to solve the equations (\ref{eq:partial-optimality1})
and (\ref{eq:partial-optimality2}) with the simplex constraint on
$y$.
\end{proof}
\bibliographystyle{abbrv}
\bibliography{reference}

\end{document}